\newcommand{\domain}{\Lambda}
\newcommand{\Ol}{\Omega_{1}}
\newcommand{\Onl}{\Omega_{2}}
\newcommand{\Gl}{{\Gamma_1}}
\newcommand{\Gnl}{{\Gamma_2}}
\renewcommand{\sl}{{\sigma_\ell}}
\newcommand{\snl}{{\sigma_{n\ell}}}
\def\calT{\mathcal{T}}
\def\calA{\mathcal{A}}
\def\calD{\mathcal{D}}
\def\calH{\mathcal{H}}
\def\calS{\mathcal{S}}
\newcommand{\Th}{{\mathcal {T}_h}}
\def\vn{{\mathbf n}}
\def\Om{\Omega}
\def\R{{\mathbb R}}
\def\rn{{{\mathbb R}^n}}
\newcommand{\phii}{{\varphi}}
\newcommand{\Ns}{\mathcal{N}_s}
\newcommand{\Ms}{\mathcal{M}_s}
\newcommand{\Msnl}{\mathcal{M}_{s,\snl}}
\newcommand{\Nsnl}{\mathcal{N}_{s,\snl}}
\newcommand{\x}{\texttt{x}}
\renewcommand{\div}{{\rm div}\,}
\newtheorem{theorem}{Theorem}
\theoremstyle{plain}
\newtheorem{definition}{Definition}
\newtheorem{lemma}{Lemma}
\newtheorem{proposition}{Proposition}
\newtheorem{remark}{Remark}
\numberwithin{equation}{section}
\numberwithin{table}{section}
\numberwithin{figure}{section}
\numberwithin{theorem}{section}
\numberwithin{corollary}{section}
\numberwithin{definition}{section}
\numberwithin{lemma}{section}
\numberwithin{proposition}{section}
\numberwithin{remark}{section} 
\begin{document}
\title[]{On some coupled local and nonlocal diffusion models}
\author[J.P.~Borthagaray]{Juan Pablo~Borthagaray}
\address[J.P.~Borthagaray]{Instituto de Matem\'atica y Estad\'istica Rafael Laguardia, Universidad de la Rep\'ublica \\ 565 Julio Herrera y Reissig, 11300 Montevideo, Uruguay}
\email{jpborthagaray@fing.edu.uy}
\thanks{The authors have been supported in part by MATHAmSud project 22-MATH-04. JPB has been supported in part by Fondo Clemente Estable grant 172393.}

\author[P.~Ciarlet~Jr.]{Patrick~Ciarlet~Jr.}
\address[P.~Ciarlet~Jr.]{POEMS, CNRS, Inria, ENSTA, Institut Polytechnique de Paris\\ 828 Boulevard des Mar\'{e}chaux, 91120 Palaiseau, France}
\email{patrick.ciarlet@ensta.fr} 
\begin{abstract}
We study problems in which a local model is coupled with a nonlocal one.  We propose two energies: both of them are based on the same classical weighted $H^1$-semi norm to model the local part, while two different weighted $H^s$-semi norms, with $s \in (0,1)$,
are used to model the nonlocal part. The corresponding strong formulations are derived. In doing so, one needs to develop some technical tools, such as suitable integration by parts formulas for operators with variable diffusivity, and one also needs to study the mapping properties of the Neumann operators that arise.
In contrast to problems coupling purely local models, in which one requires transmission conditions on the interface between the subdomains, the presence of a nonlocal operator may give rise to nonlocal fluxes. These nonlocal fluxes may enter the problem as a source term, thereby changing its structure. Finally, we focus on a specific problem, that we consider most relevant, and study regularity of solutions and finite element discretizations. We provide numerical experiments to illustrate the most salient features of the models.
\end{abstract}

\maketitle 

\section{Introduction and problem setting}

Our goal is to study problems in which local and nonlocal diffusion scalar models are coupled, from both a theoretical and a numerical point of view. To that aim, we rely on energy-based models, which involve a weighted $H^1$-semi norm (squared) to model the local part, and a weighted $H^s$-semi norm (squared) to model the nonlocal part, where $s\in(0,1)$. We observe that similar models have already been proposed by Acosta {\em et al.} \cite{AcBeRo22, AcBeRo21}, who addressed their well-posedness. These results serve as a foundation for our analysis; see Sections~\S\S\ref{subsec-Motivation}--\ref{sec:l-nl-coupling}.
In this manuscript, we focus on the transmission conditions between the two models and,  in particular, we investigate whether they can be reformulated using classical transmission conditions. It turns out this is not always the case. Also, different behaviors are to be expected, depending on the chosen nonlocal model, that is, on the value of $s$.
We also study the a priori regularity of the solutions, which depends on the geometry and in particular on the shape of the interface between the local and nonlocal models. We refer to \cite{Kr-thesis, Kr15, GiStSt19, EsGiSt20} for related regularity studies. This regularity allows us to carry out the numerical analysis after discretization by finite element techniques of the coupled models. A natural idea is to use the same discretization for both the local and the nonlocal models, so that the actual implementation is straightforward. Using classical tools such as mesh refinement techniques \cite{AcBo17}, one can then recover optimal convergence rates. 

Other coupling strategies between local and nonlocal models have already been proposed in the literature, in particular coupling classical elasticity models with peridynamic formulations.
Special attention has been given to nonlocal models with integrable kernels, and in this situation one needs to address the local limit in which the so-called interaction horizon vanishes. We refer to \cite{DuLiLuTi18,CaDEBoGu20,DiPr22,DELiSeTiYu22} and the references therein for further details.

We next describe the domain configuration. Let $\Omega \subset \rn$ be a bounded Lipschitz domain, that is partitioned into two disjoint Lipschitz subdomains $\Ol$ and $\Onl$.
We define the interface as $\Sigma := int(\overline\Ol \cap \overline\Onl) $ and, for $k = 1,2$, we set $\Gamma_k=int(\partial\Omega_k\setminus \Sigma)$.
We assume that $\Sigma,\Gl,\Gnl$ are Lipschitz submanifolds of $\partial\Ol$ or $\partial\Onl$, respectively. 
We point out that if either $\Gl$ or $\Gnl$ is empty, then $\Sigma$ is a manifold without boundary.

\subsection{Motivation}\label{subsec-Motivation}
As a starting point for our study of the coupled local/nonlocal models, we inspect a classical problem: given $f\in L^2(\Om)$, we consider the purely local energy
\begin{equation} \label{eq:local_energy}
E_\ell[u] := \frac12 \int_{\Ol} \sigma_1  |\nabla u|^2\,dx + \frac12 \int_{\Onl} \sigma_2  |\nabla u|^2\,dx  - \int_\Omega  fu\,dx ,
\end{equation}
where $\sigma$ is a given, bounded, and uniformly positive function and $\sigma_1=\sigma|_{\Ol}$, $\sigma_2=\sigma|_{\Onl}$. Starting from the local energy $E_\ell$, we note that the completion of ${\mathcal D}(\Om)$ with respect to the norm associated with the Euler-Lagrange equations
is equal to $H^1_0(\Om)$ endowed with the norm
\[ 
v \ \mapsto\ \left(\int_\Om \sigma  |\nabla v|^2\,dx\right)^{1/2}.
\]
Thus, the local problem is solved in this functional space. 
The energy $E_\ell$ is convex in $H^1_0(\Omega)$, and its minimizer satisfies the Euler-Lagrange equations: find $u\in H^1_0(\Om)$ such that
\[
\int_\Om \sigma  \nabla u\cdot\nabla v\,dx = \int_\Omega f v \, dx, \qquad \forall v \in H^1_0(\Omega),
\]
or, in strong form,
\[\left\lbrace
\begin{array}{rl}
- \div(\sigma \nabla u) = f & \mbox{in } \Om, \\
u = 0 & \mbox{on } \partial\Omega.
\end{array}
\right. \]
We can also rewrite the previous equation as a transmission problem,
\begin{equation} \label{eq:local-transmission} \left\lbrace
\begin{array}{rl}
- \div(\sigma_1 \nabla u) = f & \mbox{in } \Ol, \\
- \div(\sigma_2 \nabla u) = f & \mbox{in } \Onl , \\
u = 0 & \mbox{on } \partial\Omega, \\
u_{|\Ol} - u_{|\Onl} = 0 & \mbox{on } \Sigma, \\
(\sigma_1 \frac{\partial u}{\partial \vn})_{|\Ol} + (\sigma_2 \frac{\partial u}{\partial \vn})_{|\Onl} = 0 & \mbox{on } \Sigma,
\end{array}
\right. \end{equation}
where $\vn$ denotes the normal derivative to $\Sigma$ pointing outwards from $\Ol$.

A clear fact about the energy $E_\ell$ and its associated problem in $H^1_0(\Omega)$ is that, if the diffusivity jumps at the interface or if it is not sufficiently regular inside $\Ol$ or $\Onl$, then one cannot expect solutions to be globally any smoother than $H^{3/2}(\Omega)$. In contrast, in case of a globally Lipschitz diffusivity and $\partial \Omega$ being of class $C^{1,1}$ or $\Omega$ being convex, one can expect solutions to be in $H^2(\Omega)$. Besides the diffusivities, other aspects to take into account for the sake of the regularity of solutions are the smoothness of the resulting subdomains and how $\Sigma$ intersects $\partial\Omega$.

In some applications, it is convenient to include a nonlocal region to the model. Nonlocality can act as a means of relaxation. For example, peridynamics \cite{Silling} is a nonlocal reformulation of the basic equations of motion in continuum mechanics; the theory remains valid in the presence of discontinuities and is therefore able to cover fracture. Another example consists in the replacement of the usual $H^1$-forms by the corresponding $H^s$-forms ($0<s<1$) in scalar problems. For transmission problems with diffusivities with opposite sign and separated by an interface with a corner, in \cite{BoCi17}, we provided numerical evidence that such an approach leads to a shrinkage of the so-called critical interval for the ratio between the diffusivities, in which the Fredholm solvability is lost.

In this work, we seek an energy-based coupling between a local and a non-local model. Once the energy of the system that includes local and nonlocal contributions is defined, we derive the problem to be solved by using the Euler-Lagrange equations. This variational formulation is posed in the completion of ${\mathcal D}(\Om)$ with respect to the norm associated with these equations. We study the regularity of the solution of the resulting coupled problem and exploit it to prove a priori convergence rates for finite element discretizations of such a problem.

\subsection{Local/nonlocal coupling} \label{sec:l-nl-coupling}
Here, we propose two different energies that extend \eqref{eq:local_energy} to a local/nonlocal coupled setting. These energies replace the weighted $H^1(\Onl)$-seminorm in \eqref{eq:local_energy} by certain weighted $H^s$-seminorms. For the sake of clarity, we assume $f \in L^2(\Omega)$. In the following discussions, more precise conditions are introduced. We emphasize that our goal is to study coupled problems with homogeneous Dirichlet boundary conditions, and we will therefore analyze energies defined over ${\mathcal D}(\Om)$. In all cases the constant $C(n,s)$ is the one defined in \eqref{eq:def-cns} below, and we adopt the notation $\Omega^c := \R^n \setminus \overline \Omega$.

We assume that we are given two measurable functions $\sl, \snl$, to which we will refer as {\em diffusivities}, that satisfy $\sigma_{\max}\ge\sl, \snl \ge\sigma_{\min}>0$ a.e.. In the sequel, the domain of $\sl$ shall be $\Ol$, while the domain of $\snl$ varies from one example to the other but is a subset of $\rn \times \rn$. In order to have a usable variational structure, we require that $\snl (x,y) = \snl(y,x)$ a.e.; nevertheless, we point out this requirement is not fundamental either for well-posedness or for regularity of solutions. 
We shall make additional assumptions on the regularity of the diffusivities when needed in Section \ref{sec:regularity}.

The \emph{first energy} we consider is
\begin{equation} \label{eq:energy-I}
\begin{split}
E_{I}[u] & :=  \frac12 \int_{\Ol} \sl |\nabla u |^2 dx \\
&+ \frac{C(n,s)}{4} \iint_{\Onl \times \Onl} \snl(x,y) \frac{(u(x)-u(y))^2}{|x-y|^{n+2s}} dy dx
 - \int_\Omega f u \, dx,
\end{split} 
\end{equation}
and we take the completion of ${\mathcal D}(\Om)$ with respect to the norm associated with the Euler-Lagrange equations.
Namely, the energy $E_I$ takes into account weighted $H^1(\Ol)$- and $H^s(\Onl)$-seminorms.

\begin{remark}[issues when $s \in (0,1/2{]}$] \label{rem:issues}
While in principle the energy $E_{I}$ is meaningful for $s\in (0,1)$, for $s \in (0,1/2]$ it has two issues:
it does not admit a unique minimizer and it gives rise to two \emph{decoupled problems}. For the sake of simplicity let us assume $\snl \equiv 1$ and $\sl \equiv 1$; in such a case, the energy above is
\[
E_{I}[u] = \frac12 |u|^2_{H^1(\Ol)}+ \frac12 |u|^2_{H^s(\Onl)} - \int_\Omega f u \, dx. 
\]
Regarding the first issue, because for $s \in (0,1/2]$ we have $H^s(\Onl) \equiv H^s_0(\Onl)$, we observe that the energy-minimization problem would not be well-posed because solutions would be defined up to a constant on $\Onl$. Another way to see this is to notice that $u \mapsto \frac12 |u|^2_{H^s(\Onl)}$ does not induce a norm in $H^s_0(\Onl)$.

The second issue is related to the fact that, for $s \in (0,1/2]$, the energy $E_I$ does not enforce any continuity on $\Sigma$. Therefore, when seeking for critical points of $E_I$ one can on the one hand minimize the functional $u \mapsto \frac12 |u|^2_{H^1(\Ol)} - \int_{\Ol} f u \, dx$ over functions that vanish on $\partial \Omega$ and on the other hand seek critical points of the functional  $u \mapsto \frac12 |u|^2_{H^s(\Onl)} - \int_{\Onl} f u \, dx$.
\end{remark}

The \emph{second energy} we inspect is 
\begin{equation} \label{eq:energy-III}
\begin{split}
E_{II}[u] 
& := \frac12 \int_{\Ol} \sl |\nabla u |^2 dx \\
& + \frac{C(n,s)}{4} \iint_{Q_{\Onl}} \snl(x,y) \frac{(u(x)-u(y))^2}{|x-y|^{n+2s}} dy dx - \int_\Omega f u \, dx ,  
\end{split}
\end{equation}
where $Q_{\Onl} = (\rn \times \rn) \setminus (\Onl^c \times \Onl^c)$, and we also take the completion of ${\mathcal D}(\Om)$ with respect to the norm associated with the Euler-Lagrange equations.
With respect to the previous energy, $E_{II}$ now incorporates the interactions between $\Onl$ and $\Onl^c$. This gives rise to meaningful minimization problems for all $s \in (0,1)$, and this energy does not suffer any of the drawbacks mentioned in Remark \ref{rem:issues}. The presence of the integral over $\Ol\times\Onl$ corresponds to a `flux density' between $\Ol$ and $\Onl$ that enforces continuity across the interface and effectively couples the problems even for $s \in (0,1/2]$.

\begin{remark}[Hilbert spaces]
Both energies $E_I$ and $E_{II}$ induce Hilbert space norms in $\calD(\Omega)$. 
\end{remark}

The outline of the paper is as follows. In the next Section, we review two different definitions of fractional Laplacians, the related integration by part formulas and nonlocal Neumann operators. In Section~\ref{sec_EulerLagrange}, we build the Euler-Lagrange equations in strong form for both energies $E_{I}$ and $E_{II}$. In particular, there is a difference in the way the transmission condition acts on the solutions, depending on the chosen energy. We then study the a priori regularity of the solution, focusing on the second energy $E_{II}$. In a last part (Section~\ref{sec:experiments}), we carry out the numerical analysis for both energies, and provide some numerical results that highlight some salient features of our models. 

Along the manuscript, we use the notation $a \lesssim b$ to denote that $a \le C b$ with a constant $C$ that is independent of both $a$ and $b$; when relevant, we specify dependence.

\section{Fractional Laplacians and their associated Neumann operators}

In this section, we write suitable integration by parts formulas for the weighted operators appearing in the energies we proposed in \S\ref{sec:l-nl-coupling}.

\subsection{Weighted fractional Laplacians}\label{subsec-basics}
We shall make use of fractional-order Sobolev spaces and Besov spaces. We employ the notation from \cite{BoNo23} and refer to that work for elementary properties of these spaces. Just in this subsection, we let $s \in (0,1)$ be given, instead of $s \in (1/2,1)$ elsewhere. 

This work is concerned with {\em fractional Laplacians} with order $s$. In the whole space $\rn$, there is a clear way to define such an operator: it is the pseudo-differential operator with symbols $|\xi|^{2s}$. Among several equivalent characterizations one can provide \cite{Kwasnicki2017}, one can set for sufficiently smooth  $u \colon \rn \to \R$,
\begin{equation}\label{eq:def-ifl}
(-\Delta)^s u (x) := C(n,s) \mbox{ p.v.} \int_\rn \frac{u(x)-u(y)}{|x-y|^{n+2s}} dy, \quad x \in \rn, 
\end{equation}
with
\begin{equation} \label{eq:def-cns}
C(n,s) := \frac{2^{2s} s \Gamma(s+\frac{n}2)}{\pi^{n/2} \Gamma(1-s)}.
\end{equation}
The operator \eqref{eq:def-ifl} arises as the infinitesimal generator of a $2s$-stable L\'evy process, and therefore has been widely employed in a number of applications, including the modeling of market fluctuations \cite{ContTankov} and subsurface flow through porous media \cite{BensonWheatcraft}. Our interest in the fractional Laplacian and related operators stems from the analysis of interface problems involving dielectrics and metamaterials \cite{BoCi17}, where the relaxation of the classical $H^1$-setting by introducing $H^s$-forms --and consequently leading to the use of the fractional Laplacian \eqref{eq:def-ifl}-- yields a reduction of the so-called {\em critical interval} and therefore to a more stable behavior of solutions.

There is not a unique way to define fractional Laplacians on arbitrary domains $\domain \subset \rn$. We briefly comment on three non-equivalent definitions. The {\em spectral fractional Laplacian} corresponds to a non-integer power of the Laplace operator with Dirichlet boundary conditions in the spectral sense. A second approach consists in using definition \eqref{eq:def-ifl}; we refer to such an operator as {\em integral fractional Laplacian.} As opposed to the spectral operator, the integral one does not depend on the domain $\domain$; this definition keeps the probabilistic interpretation of the fractional Laplacian defined over $\rn$.
However, one should also observe that \eqref{eq:def-ifl} involves the values of $u$ {\em outside of $\domain$}. 
A third way to define a fractional Laplacian on a domain $\domain$ consists in restricting the integral in \eqref{eq:def-ifl} to the domain $\domain$ itself. Namely, we define the \emph{censored fractional Laplacian} (or \emph{regional fractional Laplacian}) of order $s$ of a function $u \colon \domain \to \R$ by
\begin{equation}\label{eq:def-rfl}
(-\Delta)_\domain^s u (x) := C(n,s) \mbox{ p.v.} \int_\domain \frac{u(x)-u(y)}{|x-y|^{n+2s}} dy, \quad x \in \rn. 
\end{equation}
This operator arises from taking first variation of the functional $u \mapsto \frac12|u|^2_{H^s(\domain)}$.
While in principle this operator is meaningful for $s \in (0,1)$, the case $s>1/2$ is the most interesting: in such a case, the Dirichlet problem for this operator involves boundary values on $\partial \domain$.

This work is concerned with operators related to \eqref{eq:def-ifl} and \eqref{eq:def-rfl} on bounded, Lipschitz domains in $\rn$. These operators shall arise in the Euler-Lagrange equations related to energies \eqref{eq:energy-I} and \eqref{eq:energy-III}, respectively. 
We let $\snl$ be as in the beginning of Section \ref{sec:l-nl-coupling}. By analogy with \eqref{eq:def-ifl}, let us consider the \emph{weighted integral fractional Laplacian} of order $s$, 
\begin{equation}\label{eq:def-wifl}
(-\Delta_\snl)^s u (x) := C(n,s) \mbox{ p.v.}\int_{\rn} \snl(x,y)\frac{u(x)-u(y)}{|x-y|^{n+2s}} dy.
\end{equation}
The symmetry of $\snl$ is required to have a problem with a variational structure; if one starts from an energy with non-symmetric diffusivity, then the associated operator in the Euler-Lagrange equations corresponds to a symmetrization of such a diffusivity. We can also consider a variant of \eqref{eq:def-rfl} with variable diffusivity, the \emph{weighted censored fractional Laplacian} of order $s$,
\begin{equation}\label{eq:def-wrfl}
(-\Delta_\snl)^s_\domain u (x) :=  C(n,s) \int_{\domain} \snl(x,y)\frac{u(x)-u(y)}{|x-y|^{n+2s}} dy, \quad x \in \domain.
\end{equation}

While operators like \eqref{eq:def-wifl} and \eqref{eq:def-wrfl} are well-defined under much more general assumptions on $\snl$, in practice we will require certain additional smoothness on $\snl$ in order to have integration by parts formulas or to exploit regularity estimates for the solutions of the related Dirichlet problems. We revisit this point in Section \ref{sec:regularity} below.

Finally, we note that the pointwise definition of both \eqref{eq:def-wifl} and \eqref{eq:def-wrfl} is meaningful for sufficiently smooth functions. 
On the one hand, we have $(-\Delta_\snl)^s : \calS(\rn) \to \calS(\rn)$, and we can extend the definition to tempered distributions in a standard fashion, namely $(-\Delta_\snl)^s : \calS'(\rn) \to \calS'(\rn)$ by
\[
\langle (-\Delta_\snl)^s T , \phii \rangle := \langle T , (-\Delta_\snl)^s  \phii \rangle, \quad \forall T \in \calS'(\rn), \, \phii \in \calS(\rn).
\]
On the other hand, the operator $(-\Delta_\snl)^s_\domain$ is nonlocal and does not map neither $ \calS(\rn) $ nor $\calD(\domain)$ onto themselves. For the sake of our analysis, we simply point out that if $ \phii \in \calD(\domain)$ then $(-\Delta_\snl)^s_\domain \phii \in L^2(\domain)$; additionally, integrating by parts (see \eqref{eq:ibp-censored-with-sigma} below) we have
\[
( (-\Delta_\snl)^s_\domain u , \phii ) = (u , (-\Delta_\snl)^s_\domain  \phii ), \quad \forall u , \phii \in \calD(\domain).
\]
Therefore we can extend the definition of  $(-\Delta_\snl)^s_\domain \colon L^2(\domain) \to \calD'(\domain)$ by
\begin{equation} \label{eq:def-delta_domain-dist}
\langle (-\Delta_\snl)^s_\domain u , \phii \rangle := (u , (-\Delta_\snl)^s_\domain  \phii ), \quad \forall u \in L^2(\domain), \, \phii \in \calD(\domain).
\end{equation}

\subsection{Integration by parts formulas for fractional Laplacians}
We next review some integration by parts formulas for the (weighted) integral and censored fractional Laplacians on bounded domains. For the integral operator \eqref{eq:def-ifl}, \cite[Lemma 3.3]{DiRoVa17} derives an identity of this type. A straightforward adaptation of the proof to the variable diffusivity setting gives rise to the following.

\begin{lemma}\label{lemma:ibp-snl}
Let $\domain \subset \R^n$ be bounded and Lipschitz, $\snl\in L^\infty(Q_\domain)$ be symmetric, and bounded functions $u,v \in C_c^2(\R^n)$,
\begin{equation} \label{eq:wibp}
\begin{split}
C(n,s) & \iint_{Q_\domain} \snl(x,y) \frac{(u(x)-u(y))(v(x)-v(y))}{|x-y|^{n+2s}} dy dx   \\
= &  \int_\domain v (-\Delta_\snl)^s u\,dx + \int_{\domain^c} v\,  \Nsnl^\domain u \,dx,
\end{split}
\end{equation}
where 
\begin{equation} \label{eq:def-wNs}
 \Nsnl^\domain u (x) := C(n,s) \int_\domain \snl(x,y) \frac{u(x)-u(y)}{|x-y|^{n+2s}} dy, \quad x \in \domain^c
\end{equation}
is a weighted nonlocal Neumann operator.
\end{lemma}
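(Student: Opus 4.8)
The plan is to mimic the proof of \cite[Lemma 3.3]{DiRoVa17} for the unweighted integral fractional Laplacian, carrying the symmetric weight $\snl$ through the computation. The key observation is that the bilinear form on the left-hand side is an integral over $Q_\domain = (\rn\times\rn)\setminus(\domain^c\times\domain^c)$, which decomposes as $(\domain\times\domain)\cup(\domain\times\domain^c)\cup(\domain^c\times\domain)$ up to a null set. First I would split the double integral accordingly and use the symmetry of $\snl(x,y)$ together with the symmetry of the kernel $|x-y|^{-n-2s}$ and of the factor $(u(x)-u(y))(v(x)-v(y))$ to combine the two mixed pieces $\domain\times\domain^c$ and $\domain^c\times\domain$ into twice one of them. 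On the diagonal block $\domain\times\domain$ one uses the elementary algebraic identity, valid for the symmetrized kernel,
\[
\iint_{\domain\times\domain} K(x,y)(u(x)-u(y))(v(x)-v(y))\,dy\,dx = 2\int_\domain v(x)\Big(\int_\domain K(x,y)(u(x)-u(y))\,dy\Big)dx,
\]
after symmetrizing in $x\leftrightarrow y$; with $K(x,y)=C(n,s)\snl(x,y)|x-y|^{-n-2s}$ this produces a term that, when combined with the contribution of the $\domain\times\domain^c$ block restricted to the inner integral over $\domain$, assembles exactly into $\int_\domain v\,(-\Delta_\snl)^s u\,dx$, since $(-\Delta_\snl)^s u(x) = C(n,s)\,\mathrm{p.v.}\int_{\rn}\snl(x,y)(u(x)-u(y))|x-y|^{-n-2s}dy$ integrates over all of $\rn$. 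The leftover piece, namely the mixed block with $x\in\domain^c$ and the inner integral over $y\in\domain$, is precisely $\int_{\domain^c} v(x)\,\Nsnl^\domain u(x)\,dx$ by the definition \eqref{eq:def-wNs}.

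The steps in order: (i) record that $Q_\domain$ decomposes into the three blocks above up to measure zero, and that all integrands are absolutely integrable under the stated hypotheses; (ii) on the diagonal block, symmetrize in $x\leftrightarrow y$ using that $\snl$, the kernel and the product of increments are all symmetric, to rewrite that block as $2\int_\domain v(x)\int_\domain K(x,y)(u(x)-u(y))\,dy\,dx$; (iii) on the two mixed blocks, again use symmetry to write their sum as $2\int_\domain v(x)\int_{\domain^c}K(x,y)(u(x)-u(y))\,dy\,dx + 2\int_{\domain^c}v(x)\int_\domain K(x,y)(u(x)-u(y))\,dy\,dx$ — wait, more carefully: the sum of the two mixed blocks equals $\int_{\domain\times\domain^c} + \int_{\domain^c\times\domain}$, and renaming variables in the second shows it equals $2\int_{\domain\times\domain^c} K(x,y)(u(x)-u(y))(v(x)-v(y))\,dy\,dx$; (iv) add the factor $\frac12$ (here absorbed: our identity has no $\frac12$, matching the statement) and expand $(u(x)-u(y))(v(x)-v(y))$, collecting the $v(x)$ terms over $x\in\domain$ with inner integral over all of $\rn$ to form $\int_\domain v\,(-\Delta_\snl)^s u$, and the $v(x)$ terms over $x\in\domain^c$ to form $\int_{\domain^c} v\,\Nsnl^\domain u$; (v) check that the $-v(y)$ cross-terms cancel against each other after relabeling, which is exactly where the symmetry of $\snl$ is essential.

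The main technical obstacle is the justification of the principal value and the manipulation of conditionally convergent integrals near the diagonal $\{x=y\}$, where $|x-y|^{-n-2s}$ is non-integrable for $s\ge 1/2$. The hypothesis $u,v\in C_c^2(\rn)$ is what rescues this: the second-order Taylor remainder gives $|(u(x)-u(y))(v(x)-v(y))|\lesssim|x-y|^2$ near the diagonal, so the full bilinear integrand behaves like $|x-y|^{2-n-2s}$, which is locally integrable for all $s\in(0,1)$; hence no principal value is needed on the left-hand side, and all the Fubini-type interchanges of integration order used in steps (ii)--(iv) are legitimate. The only place a p.v. survives is inside the definition of $(-\Delta_\snl)^s u(x)$ for fixed $x\in\domain$; one handles this by writing the inner integral over $\rn$ as $\lim_{\eps\to0}\int_{|x-y|>\eps}$ and using again the $C^2$ bound to pass the limit through the outer $\int_\domain v(x)\,dx$ by dominated convergence. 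Compactness of the supports of $u$ and $v$ makes the Neumann-boundary integral over $\domain^c$ convergent (the integrand decays like $|x|^{-n-2s}$ at infinity, and $v$ has compact support in any case, so it is genuinely an integral over a bounded set). Beyond this, the proof is a bookkeeping exercise and I expect it to follow the cited reference essentially verbatim, with $\snl(x,y)$ along for the ride.
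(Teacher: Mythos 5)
Your strategy coincides with the paper's: the paper gives no detailed argument, stating only that \eqref{eq:wibp} follows by a straightforward adaptation of \cite[Lemma 3.3]{DiRoVa17} to the weighted setting, and your ingredients -- the decomposition of $Q_\domain$ into the blocks $\domain\times\domain$, $\domain\times\domain^c$, $\domain^c\times\domain$, the use of the joint symmetry of $\snl$, of the kernel and of $Q_\domain$, and the integrability discussion (the second--order bound near the diagonal for $u,v\in C_c^2(\R^n)$, the compact supports at infinity, the $\eps$-truncation) -- are exactly the right ones.

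However, the bookkeeping in your steps (iv)--(v) contains a concrete error. Writing $K(x,y):=\snl(x,y)|x-y|^{-n-2s}$, the $-v(y)$ cross-terms do \emph{not} cancel: relabeling $x\leftrightarrow y$ and using the symmetry of $Q_\domain$, of $|x-y|^{-n-2s}$ and of $\snl$ shows that $-\iint_{Q_\domain}K(x,y)\,v(y)(u(x)-u(y))\,dy\,dx$ equals $+\iint_{Q_\domain}K(x,y)\,v(x)(u(x)-u(y))\,dy\,dx$, i.e.\ the cross-terms reproduce (hence double) the $v(x)$-terms; in your accounting it is precisely the relabeled cross-term on the mixed block that produces $\int_{\domain^c}v\,\Nsnl^\domain u$, which you otherwise cannot form, since after your step (iii) no block with $x\in\domain^c$ survives. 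Carried out faithfully, the computation gives $C(n,s)\iint_{Q_\domain}K(x,y)(u(x)-u(y))(v(x)-v(y))\,dy\,dx = 2\left(\int_\domain v\,(-\Delta_\snl)^s u\,dx+\int_{\domain^c}v\,\Nsnl^\domain u\,dx\right)$, that is, the identity holds with the prefactor $\frac{C(n,s)}{2}$, exactly as in \cite[Lemma 3.3]{DiRoVa17} and as the formula is actually invoked later in the paper (cf.\ \eqref{eq:def-L}, \eqref{eq:identity-L-vanishes}, \eqref{eq:weak-EIIv3}); the statement as printed omits this $\frac12$, and your remark that the factor is ``absorbed'' papers over the discrepancy rather than resolving it. A secondary point: your passage of the $\eps$-limit through $\int_\domain v(x)\,dx$ by dominated convergence tacitly relies on the symmetric cancellation of the kernel, which a merely bounded, measurable, symmetric $\snl$ does not provide pointwise when $s\ge 1/2$ (the truncated inner integrals need not be uniformly dominated, and the pointwise principal value in \eqref{eq:def-wifl} need not exist); the safe route is to perform the whole symmetrization on the excised set $\{|x-y|>\eps\}$, where every integral is absolutely convergent, and let $\eps\to0$ only in the symmetrized form, interpreting $\int_\domain v\,(-\Delta_\snl)^s u\,dx$ as the resulting iterated limit.
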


The operator \eqref{eq:def-wNs}, with $\snl \equiv 1$, has been proposed in \cite{DiRoVa17}; while there is no widely accepted definition of a Neumann condition for the integral fractional Laplacian \eqref{eq:def-ifl}, the one we chose has the advantage of giving rise to a variational structure while keeping the probabilistic interpretation of a Neumann condition as a random reflection --according to a L\'evy flight--  of a particle inside the domain. We refer to \cite[Section 7]{DiRoVa17} for a thorough comparison between this and other existing approaches.

As for the censored fractional Laplacian, according to \cite[Theorem 3.3]{Guan} and \cite[Theorem 2.7]{GalWarma} (see also \cite[Theorem 5.7]{Warma} for a more general version), we have the following integration by parts formula. Given a bounded $C^{2}$ domain $\domain$, if $s \in (1/2,1)$, $v \in H^s(\domain)$, and $u \in C(\overline\domain)$ can be written as $u (x) = \phi(x) d(x,\partial\domain)^{2s-1} + \psi(x)$ for all $x\in \domain$, with $\phi,\psi \in C^2(\overline\domain)$, then 
\begin{equation} \label{eq:ibp-censored}
\begin{split}
 \frac{C(n,s)}{2} & \iint_{\domain \times \domain} \frac{(u(x)-u(y))(v(x)-v(y))}{|x-y|^{n+2s}} dy dx   \\
= & \int_\domain v (-\Delta)^s_\domain u \,dx + \int_{\partial\domain} v \Ms^\domain u \,d\sigma.
\end{split}\end{equation}
Above, $\Ms^\domain$ is the fractional normal derivative 
\begin{equation} \label{eq:def-Ms}
\Ms^\domain u (x) :=  C'(s) \lim_{t\to 0^+} \frac{u(x-t\vn)-u(x)}{t^{2s-1}}, \quad x \in \partial\domain,
\end{equation}
where $\vn$ is the outward normal on $\partial\domain$, and $C'(s)$ is the constant introduced in \cite[(5.13)-(5.14)]{Warma} or \cite[(3.8)-(3.9)]{Guan}, 
\[
C'(s) := \frac{C(n,s)}{2s(2s-1)} \left(\int_0^\infty \frac{|t-1|^{1-2s} - \max\{1,t\}^{1-2s}}{t^{2-2s}} \, dt \right) \left(\int_{ \{|x|=1, x_n > 0\}} x_n^{2s} \, dx \right).
\]

We are interested in the extension of \eqref{eq:ibp-censored} to the variable diffusivity case. Reference \cite[Theorem 4.5]{Guan} provides sufficient conditions for such an integration by parts formula to hold. If $\snl \in C^1(\overline\domain)$, then taking $\psi_1 = \snl$, $\psi_2 = 0$ in that theorem allows one to get the formula below with
\[
\Msnl^{\domain} u (x) := \snl(x,x) \Ms^{\domain} u (x),
\]
namely
\begin{equation} \label{eq:ibp-censored-with-sigma}
\begin{split}
\frac{C(n,s)}{2} & \iint_{\domain\times\domain} \snl(x,y) \frac{(u(x)-u(y))(v(x)-v(y))}{|x-y|^{n+2s}} dy dx  \\ 
= & \int_{\domain} v (-\Delta_\snl)^s_{\domain} u \,dx+ \int_{\partial{\domain}} v \Msnl^{\domain} u \,d\sigma. 
\end{split}
\end{equation}

\subsection{Traces and liftings (or restrictions and extensions)} \label{sec:traces-liftings}
In view of the integration by parts formula \eqref{eq:wibp}, to realize the mapping properties of the weighted Neumann operator $\Nsnl^\domain u$ it suffices to characterize the trace space associated to the energy. Such an effort has been carried out, for instance, in \cite{Bogdan,GrHe22} by using robust Poisson kernel estimates. Here, we follow reference \cite{GrKa23} and introduce the space
\begin{equation}\label{eq:def-trace-space}
\calT^s(\domain^c) := \{ g \colon \domain^c \to \R \mbox{ measurable } \colon \| g \|_{\calT^s(\domain^c)} < \infty \},
\end{equation}
where
\[ \begin{aligned}
\| g \|_{\calT^s(\domain^c)} & := \left( \| g \|_{L^2(\domain^c; \mu_s)}^2 + | g |_{\calT^s(\domain^c)}^2\right)^{1/2} , \\
| g |_{\calT^s(\domain^c)} & := \left(\iint_{\domain^c \times \domain^c} \frac{(g(x) - g(y))^2}{((|x-y| + d_x + d_y ) \wedge 1)^n} \, \mu_s(dx) \mu_s(dy)\right)^{1/2},
\end{aligned} \]
$d_x = \mbox{dist}(x, \partial \domain)$, and
\[
\mu_s(dx) := \frac{(1-s) \chi_{\domain^c}(x)}{d_x^s (1+d_x)^{n+s}} \,  dx.
\]
To deal with the natural space for the  energy $E_{II}$, it shall be convenient to introduce the space
\[
V^s(\domain | \R^n) := \{ u \colon \R^n \to \R \mbox{ measurable } \colon \| u \|_{V^s(\domain | \R^n)} < \infty \},
\]
endowed with 
\begin{equation} \label{eq:def-Vs-norm} \begin{aligned}
\| u \|_{V^s(\domain | \R^n)} & := (\| u \|_{L^2(\domain)}^2 + |u|_{V^s(\domain | \R^n)}^2)^{1/2}, \\
 |u|_{V^s(\domain | \R^n)}& := \left(\frac{C(n,s)}{2} \iint_{Q_\domain}\frac{(u(x)-u(y))^2}{|x-y|^{n+2s}} dy dx\right)^{1/2}.
\end{aligned} \end{equation}

\begin{remark}[boundedness of $V^s$-inner product]\label{rem:Cauchy-Schwarz}
Let $t \in [s, \min\{1,2s\})$. Then, a straightforward application of the Cauchy-Schwarz inequality yields
\[
\frac{C(n,s)}{2} \iint_{Q_\domain}  \snl(x,y) \frac{(u(x)-u(y))(v(x)-v(y))}{|x-y|^{n+2s}} dy dx \lesssim |u|_{V^t(\domain | \R^n)} |v|_{V^{2s-t}(\domain | \R^n)}
\]
for all $u \in V^{t}(\domain | \R^n), \, v \in V^{2s-t}(\domain | \R^n)$, with a constant depending on $n,s,t, \snl$.
\end{remark}

With the notation above, we have the following trace (restriction) and lifting (extension) result \cite[Theorem 1.2]{GrKa23}.

\begin{theorem} \label{thm:trace-extension}
Let $\domain$ be a bounded Lipschitz domain and $t \in (0,1)$.
\begin{enumerate}
\item There exists a continuous trace operator ${\rm Tr}_t \colon V^t(\domain | \R^n) \to \calT^t(\domain^c)$.

\item There exists a continuous lifting operator ${\rm Ext}_t \colon \calT^t(\domain^c) \to V^t(\domain | \R^n)$.
\end{enumerate}
In both cases, the continuity constants depend on $\Omega$ and on a lower bound on $t$, namely, they are uniformly bounded as $t \to 1$.
\end{theorem}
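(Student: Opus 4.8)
The plan is to prove both parts by explicit constructions that reduce the global estimates to local, dyadic ones, in the standard spirit of nonlocal trace theorems; the statement coincides with \cite[Theorem 1.2]{GrKa23}, whose argument this outline follows (throughout, $\mu_t$, $C(n,t)$, $\calT^t$, $V^t$ denote the objects introduced above with the parameter renamed to $t$).

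\emph{Geometric reduction.} Being bounded and Lipschitz, $\domain$ satisfies uniform interior and exterior corkscrew conditions. Fix once and for all: Whitney decompositions $\{W_j\}$ of $\domain$ and $\{W'_k\}$ of $\domain^c$ into dyadic cubes with $\mathrm{diam}(\cdot)\sim\mathrm{dist}(\cdot,\partial\domain)$; a partition of unity $\{\varphi_j\}$ on $\domain$ subordinate to a fixed dilation of $\{W_j\}$, with $\|\nabla\varphi_j\|_\infty\lesssim\mathrm{diam}(W_j)^{-1}$; and a ``reflection'' that attaches to each interior cube $W_j$ a region $\widehat{W}_j\subset\domain^c$ with $\mathrm{diam}(\widehat{W}_j)\sim\mathrm{dist}(\widehat{W}_j,\partial\domain)\sim\mathrm{dist}(\widehat{W}_j,W_j)\sim\mathrm{diam}(W_j)$, and symmetrically a region $\widehat{W}'_k\subset\domain$ to each exterior cube $W'_k$; all overlap and comparability constants depend only on the Lipschitz character of $\domain$. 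We write $d_x=\mathrm{dist}(x,\partial\domain)$, which is comparable to the side length of the Whitney cube containing $x$, and all weights below are expressed through this dyadic scale. One could alternatively first localize near $\partial\domain$ with a partition of unity and work with hypograph domains, but I expect this extra step to be unnecessary.

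\emph{Restriction.} Given $u\in V^t(\domain\mid\R^n)$, set $g:=u|_{\domain^c}$ (this defines $\mathrm{Tr}_t$). Since $Q_\domain$ contains $\domain\times\domain$, $\domain\times\domain^c$ and $\domain^c\times\domain$, the seminorm $|u|_{V^t(\domain\mid\R^n)}^2=\tfrac{C(n,t)}{2}\iint_{Q_\domain}|x-y|^{-n-2t}(u(x)-u(y))^2\,dx\,dy$ controls, with $C(n,t)\sim 1-t$, both $\iint_{\domain\times\domain}|z-w|^{-n-2t}(u(z)-u(w))^2\,dz\,dw$ and $\iint_{\domain^c\times\domain}|x-z|^{-n-2t}(g(x)-u(z))^2\,dz\,dx$. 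To bound $\|g\|_{L^2(\domain^c;\mu_t)}$, for $x\in W'_k$ compare $g(x)$ with the average $u_{\widehat{W}'_k}$ of $u$ over the reflected interior region: since $|x-z|\sim d_x$ for $z\in\widehat{W}'_k$ one gets $(g(x)-u_{\widehat{W}'_k})^2\lesssim d_x^{\,n+2t}\,|\widehat{W}'_k|^{-1}\!\int_{\widehat{W}'_k}|x-z|^{-n-2t}(g(x)-u(z))^2\,dz$, and, after multiplying by $\mu_t(dx)$ and a Fubini argument with bounded overlap, this is $\lesssim|u|_{V^t}^2$ — the residual powers of $d_x$ being bounded and the $(1-t)$ in $\mu_t$ matching the $C(n,t)$; the remaining contribution of the averages $u_{\widehat{W}'_k}$ is absorbed by chaining through Whitney cubes of $\domain$ of increasing depth toward a fixed bulk region, together with a fractional Hardy inequality on $\domain$, and it is here that the decay exponent $n+t$ and the cutoff at scale $1$ in $\mu_t$ enter. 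For $|g|_{\calT^t(\domain^c)}$, for $x,y\in\domain^c$ write $g(x)-g(y)=(g(x)-u_{\widehat{W}'_{k(x)}})+(u_{\widehat{W}'_{k(x)}}-u_{\widehat{W}'_{k(y)}})+(u_{\widehat{W}'_{k(y)}}-g(y))$, expand the middle difference as a telescoping sum of averages along a chain of Whitney cubes of $\domain$ joining the two reflected regions, of length comparable to $\log\!\big(2+(|x-y|+d_x+d_y)/(d_x\wedge d_y)\big)$; squaring, using a weighted Cauchy--Schwarz along the chain (to avoid a logarithmic loss), and summing against $((|x-y|+d_x+d_y)\wedge 1)^{-n}\mu_t(dx)\mu_t(dy)$ reduces everything term by term to the two $V^t$-quantities above, the exponent $n$ in the truncated denominator being exactly the one that balances the volume of the averaging regions.

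\emph{Extension.} Given $g\in\calT^t(\domain^c)$, define $\mathrm{Ext}_t\,g:=g$ on $\domain^c$ and $\mathrm{Ext}_t\,g:=\sum_j\varphi_j\,g_j$ on $\domain$, with $g_j:=|\widehat{W}_j|^{-1}\int_{\widehat{W}_j}g$. Then $(\mathrm{Ext}_t\,g)|_{\domain^c}=g$, and $\|\mathrm{Ext}_t\,g\|_{L^2(\domain)}\lesssim\|g\|_{L^2(\domain^c;\mu_t)}$ by bounded overlap of the $\widehat{W}_j$ and the precise form of $\mu_t$. For the seminorm, split $Q_\domain$ into $\domain\times\domain$ and $\domain\times\domain^c$ (the part $\domain^c\times\domain$ being symmetric). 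On $\domain\times\domain$ one runs the classical Whitney-extension argument: for $x\in W_j$, $y\in W_k$ the difference $\mathrm{Ext}_t\,g(x)-\mathrm{Ext}_t\,g(y)$ is controlled by $\sum|g_i-g_{i'}|$ over pairs of interacting cubes, $|g_i-g_{i'}|^2\lesssim|\widehat{W}_i|^{-1}|\widehat{W}_{i'}|^{-1}\iint_{\widehat{W}_i\times\widehat{W}_{i'}}(g(v)-g(w))^2\,dv\,dw$, and the comparabilities of diameters and distances turn the double sum over Whitney cubes into a bounded multiple of $|g|_{\calT^t(\domain^c)}^2$. On $\domain\times\domain^c$, for $x\in W_j$ and $y\in\domain^c$ write $\mathrm{Ext}_t\,g(x)-g(y)=\sum_i\varphi_i(x)(g_i-g(y))$, bound $|g_i-g(y)|^2\le 2|g_i-g(v)|^2+2|g(v)-g(y)|^2$ and average $v\in\widehat{W}_i$; the first term is of $\domain\times\domain$-type, while in the second, integrating $|x-y|^{-n-2t}$ over $x\in W_j$ and using $|x-y|\sim|v-y|+\mathrm{diam}(W_j)$ with $\mathrm{diam}(W_j)\sim d_v$ produces exactly a multiple of $\iint_{\domain^c\times\domain^c}((|v-y|+d_v+d_y)\wedge 1)^{-n}(g(v)-g(y))^2\,\mu_t(dv)\,\mu_t(dy)=|g|_{\calT^t(\domain^c)}^2$. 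This last computation is precisely what fixes the exponents $t$, $n+t$ and $n$ in the definition of $\calT^t(\domain^c)$.

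\emph{Main obstacle and uniformity.} The crux is the extension step, and within it (i) the combinatorial bookkeeping of the Whitney-cube interactions so that the $\domain\times\domain$ piece closes against $|g|_{\calT^t(\domain^c)}^2$, and (ii) the verification that integrating the kernel $|x-y|^{-n-2t}$ against $\mathrm{Ext}_t\,g$ over $\domain\times\domain^c$ reproduces, uniformly near $\partial\domain$ and at infinity, the weight defining $\calT^t(\domain^c)$ — i.e.\ that the boundary exponent $t$, the decay exponent $n+t$ and the truncated-denominator exponent $n$ are the ones forced by the geometry. On the restriction side the analogous difficulties are the chaining argument (controlling chain lengths and avoiding logarithmic losses) and the fractional Hardy estimate. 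Uniformity of the constants as $t\uparrow 1$ is then bookkeeping: keeping $C(n,t)\sim 1-t$ in the $V^t$-seminorm and the factor $1-t$ in $\mu_t$ explicit throughout, in every estimate the $(1-t)$'s match and the residual powers of the dyadic scale stay bounded, so all implicit constants depend on $\domain$ only through its Lipschitz character and on a lower bound for $t$; in particular they stay bounded as $t\uparrow 1$, as claimed.
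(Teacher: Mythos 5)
The paper does not actually prove this statement: Theorem \ref{thm:trace-extension} is quoted directly from \cite[Theorem 1.2]{GrKa23}, so there is no internal argument to compare yours against. Your outline follows the same route as that cited source rather than a different one --- Whitney decompositions of $\domain$ and $\domain^c$, reflected cubes, an averaged extension built with a subordinate partition of unity, and a chaining argument for the restriction estimate --- and the constructions you describe are the standard and correct ones for nonlocal trace theorems of this type. Keep in mind, however, that what you have written is a plan rather than a proof: the steps you yourself identify as the crux (the chaining/fractional Hardy argument needed to control $\| g \|_{L^2(\domain^c;\mu_t)}$ and the middle telescoping term, and the verification that the $\domain\times\domain^c$ integration reproduces exactly the exponents $t$, $n+t$ and $n$ appearing in $\calT^t(\domain^c)$), together with the uniform-in-$t$ tracking of the $(1-t)$ factors as $t\to 1$, are asserted rather than carried out, and these constitute essentially all of the technical content of \cite{GrKa23}. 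For the purposes of this paper that is acceptable, since the theorem is imported and not reproved; as a standalone proof, those estimates would need to be written out in full.
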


\subsection{On the domain of the nonlocal Neumann operator.} \label{sec:domain-Nsnl}
We can combine Theorem \ref{thm:trace-extension} with the integration by parts formula \eqref{eq:wibp} to extend the operator $\Nsnl^\domain$ to a broader class of functions than $C^2_c(\R^n)$.
Indeed, let $t \in [s, \min\{1,2s\})$ and $\tau \in [0, \min\{1/2, 2s - t\} )$. We define the space
$\mathcal{H}^t(\domain)$ through 
\begin{equation}\label{eq:def-calHt}
\mathcal{H}^t(\domain) = \overline{C^\infty_c(\R^n)}^{\| \cdot \|_{\mathcal{H}^t(\domain)}}, \quad
\| u \|_{\mathcal{H}^t(\domain)} := \| u \|_{V^t(\domain | \R^n)} + \| (-\Delta_\snl)^s u \|_{H^{-\tau}(\domain)}.
\end{equation}
Here, because $\tau<1/2$, we understand the restriction of the distribution $(-\Delta_\snl)^s u$ in the dual space of $\widetilde{H}^\tau(\domain)$, namely, acting on functions that are extended by zero on $\domain^c$. 
We let $L \colon \mathcal{H}^t(\domain) \times V^{2s-t}(\domain | \R^n) \to \R$,
\begin{equation} \label{eq:def-L} \begin{split} 
L(u,v) := & \frac{C(n,s)}{2} \iint_{Q_\domain}  \snl(x,y) \frac{(u(x)-u(y))(v(x)-v(y))}{|x-y|^{n+2s}} dy dx \\ & - \int_\domain v (-\Delta_\snl)^s u\,dx .
\end{split} 
\end{equation}
By Remark \ref{rem:Cauchy-Schwarz}, $L$ is a bounded, linear operator:
\begin{equation} \label{eq:L-bounded} \begin{split}
|L(u,v)| & \lesssim \left( |u|_{V^t(\domain | \R^n)} |v|_{V^{2s-t}(\domain | \R^n)} + \| v \|_{H^\tau(\domain)}\| (-\Delta_\snl)^s u \|_{H^{-\tau}(\domain)} \right) \\
& \lesssim \|u \|_{\mathcal{H}^t(\domain)} \|v\|_{V^{2s-t}(\domain | \R^n)},
\end{split}\end{equation}
because $\| v \|_{H^\tau(\domain)} = (\| v \|_{L^2(\domain)}^2 + | v |_{H^\tau(\domain)}^2)^{1/2} \lesssim \|v\|_{V^{2s-t}(\domain | \R^n)}$ since $\tau \le 2s-t$ and $\domain$ is a bounded, Lipschitz set.

Next, if $u \in C^2_c(\R^n)$ and $v \in \mathcal{D}(\domain)$, we can use the integration by parts formula from Lemma \ref{lemma:ibp-snl} to observe
\begin{equation} \label{eq:identity-L-vanishes}
L(u,v) =\int_{\domain^c} v \Ns^\domain u\,dx = 0
\end{equation}
and, by density, we deduce that $L(u,v) = 0$ for all $u \in \mathcal{H}^t(\domain)$, $v \in \widetilde{H}^{2s-t}(\domain)$. 

We define $L_{\domain^c} \colon \mathcal{H}^t(\domain) \times \calT^{2s-t}(\domain^c)$,
\[
L_{\domain^c}(u,g) = L(u, {\rm Ext}_{2s-t}(g) ),
\]
where ${\rm Ext}_{2s-t}  \colon \calT^{2s-t}(\domain^c) \to  V^{2s-t}(\domain | \R^n)$ is an extension operator as in Theorem \ref{thm:trace-extension}. It is clear that $L_{\domain^c}$ does not depend on how we extend $g$ to $\domain$: any other extension ${\rm Ext'}(g) \in V^{2s-t}(\domain | \R^n)$ would yield ${\rm Ext}_{2s-t}(g) - {\rm Ext'}(g) \in \widetilde{H}^{2s-t}(\domain)$ and therefore $L(u,  {\rm Ext'}(g)) = L(u,  {\rm Ext}_{2s-t}(g))$.

Given $u \in \calH^t(\domain)$, $g \in \calT^{2s-t}(\domain^c)$, we then have by \eqref{eq:L-bounded},
\[
|L_{\domain^c}(u,g)| \lesssim \| u \|_{\mathcal{H}^t(\domain)}  \|  {\rm Ext}_{2s-t}(g) \|_{V^{2s-t}(\domain | \R^n)} \lesssim \| u \|_{\mathcal{H}^t(\domain) }  \| g \|_{ \calT^{2s-t}(\domain^c)}.
\]
The preceding discussion leads us to defining, for any $t \in [s, \min \{1, 2s\})$,
\[
\Nsnl^\domain \colon \mathcal{H}^t(\domain) \to \left( \calT^{2s-t}(\domain^c)\right)'
\]
by
\begin{equation} \label{eq:def-Nsnl}
\langle \Nsnl^\domain u, g \rangle := L_{\domain^c}(u,g).
\end{equation}
Naturally, by construction this is a bounded operator; additionally, if $u$ is sufficiently smooth we can identify $\Nsnl^\domain$ with its pointwise definition, and the integration by parts formula remains valid.

\begin{remark}[role of $t$] We introduced a parameter $t \in  [s, \min \{1, 2s\})$ in the previous definition of the nonlocal Neumann operator $\Nsnl^\domain$. Unlike Sobolev spaces defined on bounded domains, the spaces $V^s(\domain|\R^n)$ are not ordered with respect to $s$: in general, it is not true that $V^t(\domain|\R^n) \subset V^s(\domain|\R^n)$ if $t > s$ because the former space can accommodate functions with a stronger growth at infinity than the latter. Therefore, formally, the operator \eqref{eq:def-Nsnl} depends on $t$. However, for the sake of this work, if we restrict our attention to functions that have bounded support, then $t$ plays no significant role as all the $t$-dependent operators coincide with the pointwise definition \eqref{eq:def-wNs} for functions in $C^\infty_c(\R^n)$.
\end{remark}

\begin{remark}[role of $\tau$]
To define the nonlocal Neumann operator $\Nsnl^\domain u$, we not only required the function $u$ to belong to the natural energy space $V^s(\domain | \R^n)$ but also that  $(-\Delta_\snl)^s u  \in H^{-\tau}(\domain)$ for some $\tau < 1/2$. This is in alignment with the classical (local) case, in which one cannot define normal derivatives of functions on $H^1(\Omega)$ but requires some integrability of their Laplacian (see, for example, \cite[Chap. 6]{Sayas-book}).
\end{remark}
\section{Euler-Lagrange equations}\label{sec_EulerLagrange}
For the two energies we introduced in \S\ref{sec:l-nl-coupling}, that extend \eqref{eq:local_energy} to a coupled local/nonlocal setting, we discuss the strong form of the resulting Euler-Lagrange equations. We recall that $\sl, \snl$ are bounded and uniformly positive functions.

Conceptually, we follow a standard procedure: we first consider the first variation of the energies with respect to smooth functions supported in either $\Ol$, $\Onl$, to obtain suitable integro-differential PDEs on such domains, and afterwards with respect to smooth functions whose support intersects $\Sigma$ to derive transmission conditions. In doing so, we find some remarkable differences between these energies.

\subsection{Euler-Lagrange equation for $E_{I}$}\label{subsec_Euler-Lagrange_I}
We begin with the energy $E_{I}$ in \eqref{eq:energy-I}. We assume $\Onl$ to be a $C^2$ domain, so that the integration by parts formula \eqref{eq:ibp-censored-with-sigma} holds. The Euler-Lagrange equations associated to $E_I$ write: find $u\in \calH_{I}$ such that
\begin{equation} \label{eq:Euler-Lagrange-E_I} \begin{aligned}
 \int_{\Ol} \sl \nabla u \cdot \nabla v\, dx + \frac{ C(n,s) }{2} \iint_{\Onl^2} \snl(x,y) \frac{(u(x)-u(y))(v(x)-v(y))}{|x-y|^{n+2s}} dy dx  \\
= \int_\Omega f v \, dx, \qquad \forall v \in \mathcal{H}_{I},
\end{aligned}
\end{equation}
where $\mathcal{H}_{I}$ is the completion of ${\mathcal D}(\Om)$ with respect to the norm associated with the energy,
\[ v \ \mapsto\ \int_{\Ol} \sl |\nabla v|^2\, dx+ \left( \frac{ C(n,s) }{2} \iint_{\Onl\times\Onl} \snl(x,y) \frac{(v(x)-v(y))^2}{|x-y|^{n+2s}} dy dx \right)^{1/2}.\]

\smallskip

\noindent
\underline{Step 1}. 
We take $v\in{\mathcal D}(\Ol)$ in \eqref{eq:Euler-Lagrange-E_I} and integrate by parts to find that
\[ \langle -\div (\sl \nabla u), v \rangle = \int_{\Ol} \sl \nabla u \cdot \nabla v\, dx = \int_{\Ol} f v \, dx \qquad \forall v \in {\mathcal D}(\Ol). \]
Thus, in ${\mathcal D}'(\Ol)$, and even in $L^2(\Ol)$ since $f_{|\Ol}\in L^2(\Ol)$,
\begin{equation} \label{eq:u-Hdiv}
-\div (\sl \nabla u) = f \mbox{ in }\Ol. 
\end{equation}

\smallskip

\noindent
\underline{Step 2}. 
Next, we take $v\in{\mathcal D}(\Onl)$ in \eqref{eq:Euler-Lagrange-E_I} to obtain, for all $v \in {\mathcal D}(\Onl)$,
\[  \frac{C(n,s)}{2} \iint_{\Onl\times\Onl} \snl(x,y) \frac{(u(x)-u(y))(v(x)-v(y))}{|x-y|^{n+2s}} dy dx = \int_{\Onl} f v \, dx. \]
On the other hand, using \eqref{eq:def-delta_domain-dist} and the fact that $\mathcal{H}_{I} \subset H^s(\Onl)$ so that we can apply \eqref{eq:ibp-censored-with-sigma}, we have
\[ \frac{ C(n,s) }{2} \iint_{\Onl\times\Onl} \snl(x,y) \frac{(u(x)-u(y))(v(x)-v(y))}{|x-y|^{n+2s}} dy dx = \langle  (-\Delta_\snl)^s_{\Onl} u, v \rangle.\]
Thus, in ${\mathcal D}'(\Onl)$, and even in $L^2(\Onl)$ since $f_{|\Onl}\in L^2(\Onl)$,
\[ 
(-\Delta_\snl)^s_{\Onl} u = f \mbox{ in }\Onl. 
\]

\smallskip

\noindent
\underline{Step 3}. 
Finally, let us take $v\in{\mathcal D}(\Omega)$ in \eqref{eq:Euler-Lagrange-E_I}. Formula \eqref{eq:u-Hdiv} yields $\sl \nabla u |_{\Ol} \in \mathbf{H}(\div,\Ol)$, and we therefore have
\[ \begin{split}
\int_{\Ol} \sl \nabla u \cdot \nabla v\, dx & = - \int_{\Ol} \div(\sl \nabla u ) v\, dx + \langle (\sl \frac{\partial u}{\partial \vn})_{|\Ol}, v \rangle_{\widetilde{H}^{1/2}(\Sigma)} \\
& = \int_{\Ol} f  v\, dx + \langle (\sl \frac{\partial u}{\partial \vn})_{|\Ol}, v \rangle_{\widetilde{H}^{1/2}(\Sigma)},
\end{split} \]
where $\widetilde{H}^{1/2}(\Sigma):= \{ g \in H^{1/2}(\partial \Ol) \colon g |_{\Gl} = 0 \} = \{ g \in H^{1/2}(\partial \Onl) \colon g |_{\Gnl} = 0 \}$, and the equality holds algebraically and topologically. Since $(-\Delta_\snl)^s_{\Onl} u\in L^2(\Om_2)$, we can apply the integration by parts formula \eqref{eq:ibp-censored-with-sigma} and replace in \eqref{eq:Euler-Lagrange-E_I} to arrive at
\[ \langle (\sl \frac{\partial u}{\partial \vn})_{|\Ol}, v \rangle_{\widetilde{H}^{1/2}(\Sigma)} + \int_{\Sigma} v \Msnl^{\Om_2} u \,d\sigma = 0, \quad \forall v \in {\mathcal D}(\Omega) .\]
In other words, we have shown that
\[ (\sl \frac{\partial u}{\partial \vn})_{|\Ol} + \Msnl^{\Om_2} u = 0 \mbox{ on } \Sigma. \]

To conclude, the minimization problem associated with the energy $E_I$ corresponds to the following:
find $u\in \mathcal{H}_{I}$ such that
\begin{equation}\label{eq:coupled_problemI_NEW} \tag{CP-I-NEW}
 \left\lbrace \begin{aligned}
-\div (\sl \nabla u) = f  & \mbox{ in } \Ol, \\
(-\Delta_\snl)^s_{\Onl} u = f  & \mbox{ in } \Onl, \\
(\sl \frac{\partial u}{\partial \vn})_{|\Ol} + (\Msnl^{\Om_2} u)_{|\Onl} = 0 & \mbox{ on } \Sigma.
\end{aligned} \right.
\end{equation}
Thus, for this energy the coupling between the local and the nonlocal models occurs through the interface $\Sigma$. Remarkably, the resulting transmission condition involves the fractional normal derivative operator $\Msnl^{\Om_2}$.

\subsection{Euler-Lagrange equation for $E_{II}$}\label{subsec_Euler-Lagrange_III}
For the energy $E_{II}$ in \eqref{eq:energy-III}, the Euler-Lagrange equations write: find $u\in \mathcal{H}_{II}$ such that
\begin{equation} \label{eq:weak-EIIv3} \begin{aligned}
\int_{\Ol}  \sl \nabla u \cdot \nabla v\, dx + \frac{C(n,s)}{2}  \iint_{Q_{\Onl}} \snl(x,y) \frac{(u(x)-u(y))(v(x)-v(y))}{|x-y|^{n+2s}} dy dx\\ 
= \int_\Omega f v \, dx, \quad \forall v \in \mathcal{H}_{II},
\end{aligned} \end{equation}
where $\mathcal{H}_{II}$ is the completion of ${\mathcal D}(\Om)$ with respect to the norm associated with the Euler-Lagrange equations
\begin{equation} \label{eq:norm-EII}
v \ \mapsto\ \left(  \int_{\Ol} \sl |\nabla v|^2\, dx + \frac{C(n,s)}{2} \iint_{Q_{\Onl}} \snl(x,y) \frac{(v(x)-v(y))^2}{|x-y|^{n+2s}} dy dx\right)^{1/2}
\end{equation}
which, since $\sigma_{\max}\ge\sl, \snl \ge\sigma_{\min}>0$ a.e. and using the notation \eqref{eq:def-Vs-norm}, coincides with
\[
\left( | v |_{H^1(\Ol)}^2 + | v |_{V^s(\Onl | \R^n)}^2 \right)^{1/2} .
\]
We repeat the same three steps as in \S\ref{subsec_Euler-Lagrange_I}.
\smallskip

\noindent
\underline{Step 1}. Taking $v\in{\mathcal D}(\Ol)$, the integral over $\Ol$ in \eqref{eq:weak-EIIv3} is handled as before by a standard integration by parts. On the other hand, there is a contribution coming from the double integral over $Q_{\Onl}$, namely over $(\Ol\times\Onl)\cup(\Onl\times\Ol)$. Recalling the definition \eqref{eq:def-wNs} of the weighted  nonlocal Neumann operator, which acts as a flux density from $\Onl$ to $\Ol$, we conclude that, in $L^2(\Ol)$,
\[ -\div (\sl \nabla u) + \Nsnl^{\Onl} u = f \mbox{ in }\Ol. \]

\smallskip

\noindent
\underline{Step 2}. Taking $v\in{\mathcal D}(\Onl)$ in \eqref{eq:def-L} and using \eqref{eq:identity-L-vanishes}, one has
\[ \frac{C(n,s)}{2}\iint_{Q_{\Onl}} \snl(x,y)  \frac{(u(x)-u(y))(v(x)-v(y))}{|x-y|^{n+2s}} dy dx  = \int_{\Onl} f v \, dx.
\]
On the other hand, since $v$ vanishes on $\Onl^c$, formula \eqref{eq:def-Nsnl} yields
\[ \frac{C(n,s)}{2} \iint_{Q_{\Onl}} \snl(x,y) \frac{(u(x)-u(y))(v(x)-v(y))}{|x-y|^{n+2s}} dy dx = \langle  (-\Delta_\snl)^s u, v \rangle. \]
Thus, we obtain that in ${\mathcal D}'(\Onl)$, and even in $L^2(\Onl)$ since $f_{|\Onl}\in L^2(\Onl)$,
\[ (-\Delta_\snl)^s u = f \mbox{ in }\Onl. \]

\subsubsection*{A Neumann-type operator} To obtain the Euler-Lagrange equations for $E_{II}$, we still need to consider $v\in{\mathcal D}(\Omega)$ in \eqref{eq:weak-EIIv3} and perform the corresponding integration by parts. However, over $\Ol$, we only know that the sum $\calA u := -\div (\sl \nabla u) + \Nsnl^{\Onl} u$ belongs to $L^2(\Ol)$ but not the terms separately.
We shall proceed similarly to the second part of \S\ref{sec:traces-liftings} to derive a integration by parts formula for $\calA$ and deduce the form of its associated Neumann operator. Indeed, we let $\mathcal{J}^s(\Onl; \Ol)$ be the closure of $C^\infty_c(\R^n)$ with respect to
the norm
\[
\| u \|_{\mathcal{J}^s(\Onl;\Ol)} := \left( \| u \|_{V^s(\Onl | \R^n)}^2 +  \| (-\Delta_\snl)^s u \|_{L^2(\Onl)}^2 + \| u \|_{H^1(\Ol)}^2 +  \| \calA u \|_{L^2(\Ol)}^2 \right)^{\frac12}.
\]
We remark that, above, similarly to \S\ref{sec:domain-Nsnl}, instead of $\| (-\Delta_\snl)^s u \|_{L^2(\Onl)}$, for the sake of the following argument we could have equivalently used $\| (-\Delta_\snl)^s u \|_{H^{-\tau}(\Onl)}$ for any $\tau \in (0,1/2)$. We point out that $\mathcal{J}^s(\Onl;\Ol) \subset \mathcal{H}^s(\Onl)$ (see definition \eqref{eq:def-calHt}), and therefore for all $u \in  \mathcal{J}^s(\Onl;\Ol) $ we have
 $\Nsnl^{\Onl} u \in \left( \calT^{s}(\Onl^c)\right)'$ by \eqref{eq:def-Nsnl}.

We let $H^1_{0,\Gl}(\Ol)$ be the completion of
\[ \{ v \in C^\infty_c(\overline{\Ol}) \colon v=0 \mbox{ in a neighborhood of } \Gl\}  \]
in $H^1(\Ol)$. Since $\Gl$ is a Lipschitz submanifold of $\partial\Ol$, we know that $H^1_{0,\Gl}(\Ol)$ is algebraically and topologically equal to the set of functions in $H^1(\Ol)$ that have zero trace on $\Gl$. Then, we define a bilinear operator $L \colon \mathcal{J}^s(\Onl; \Ol) \times H^1_{0,\Gl}(\Ol) \to \R$,
\begin{equation} \label{eq:def-L-II}
L(u,v) := \langle \Nsnl^{\Onl} u , v \rangle
- \int_{\Ol} \mathcal{A} u v + \int_{\Ol} \sl \nabla u \cdot \nabla v .
\end{equation}
We claim that $L$ is well-defined. Given $u \in \mathcal{J}^s(\Onl;\Ol) $ and $v \in H^1_{0,\Gl}(\Ol)$, by the definitions of these spaces the only term in the formula above that we need to inspect with detail is the first one and, since $\Nsnl^{\Onl} u \in \left( \calT^{s}(\Onl^c)\right)'$, it suffices to show that $v \in \calT^{s}(\Onl^c)$. Because  $v \in H^1_{0,\Gl}(\Ol)$, we can extend it first to $H^1_0(\Omega)$ by lifting  $v|_{\Sigma}\in\widetilde{H}^{1/2}(\Sigma)$ to $\Onl$ and then by zero on $\Omega^c$. Denoting by $\tilde E v$ be such an extension, we  find that $\tilde Ev \in \calT^{s}(\Onl^c)$ because $v|_{\Onl^c} \in H^s(\Onl^c)$, and $\| \tilde E v  \|_{\calT^{s}(\Onl^c)} \lesssim \|v \|_{H^1(\Ol)}$. We remark that, according to \eqref{eq:def-Nsnl}, the way $v$ is extended to $\Onl$ is irrelevant for the computation of the duality term $\langle \Nsnl^{\Onl} u , v \rangle$.

From the preceding discussion, it follows that the operator $L$ is bounded:
\[ \begin{split}
| L(u,v) | & \le \| \Nsnl^{\Onl} u  \|_{\left(\calT^{s}(\Onl^c)\right)'}  \| \tilde E v  \|_{\calT^{s}(\Onl^c)} \\ & \quad +  \| \mathcal{A} u \|_{L^2(\Ol)} \| v \|_{L^2(\Ol)} \\ & \quad  + \| \sl \|_{L^\infty(\Ol)} |u|_{H^1(\Ol)} |v|_{H^1(\Ol)} \\ 
 & \lesssim \| u \|_{\mathcal{J}^s(\Onl;\Ol)} \|v \|_{H^1(\Ol)},
\end{split}\]
where we used the fact that 
\[
\| \Nsnl^{\Onl} u  \|_{\left(\calT^{s}(\Onl^c)\right)'}\lesssim \left( \| u \|_{V^s(\Onl | \R^n)} +  \| (-\Delta_\snl)^s u \|_{L^2(\Onl)} \right).
\]

Similarly to \S\ref{sec:domain-Nsnl}, if $u \in C^2(\overline\Omega)$ and $v \in \mathcal{D}(\Ol)$ we have, for the operator $L$ in \eqref{eq:def-L-II}, that $L(u,v) = 0$. Indeed, we can split the operator $\calA u = -\div (\sl \nabla u) + \Nsnl^{\Onl} u$, cancel out the integrals involving  $\Nsnl^{\Onl} u$, and integrate by parts the local term. Therefore, $L$ vanishes on $\mathcal{J}^s(\Onl;\Ol) \times H^1_0(\Ol)$.

Next, we use the operator $L$  to define a {\em local Neumann operator} over $\Sigma$ for functions in $\mathcal{J}^s(\Onl;\Ol)$, which we shall denote by $D_\Sigma$ and which we expect to be in some space with differentiability order $-1/2$\footnote{This is by analogy with the standard Neumann operator, that maps functions on $H^1(\Omega)$ whose Laplacian belongs to $L^2(\Omega)$ to $(\widetilde{H}^{1/2}(\Sigma))'$, cf. \cite[Chap. 6]{Sayas-book}, for example.}. Given $g \in \widetilde{H}^{1/2}(\Sigma)$, we lift it to $Eg \in H^1_{0,\Gl}(\Ol)$ and we can then define $D_\Sigma \colon\mathcal{J}^s(\Onl;\Ol) \to (\widetilde{H}^{1/2}(\Sigma))'$ by 
\[
\langle D_\Sigma u , g \rangle := L(u,Eg).
\]
It is clear that this definition does not depend on the lifting operator $E$:  given any two liftings of $g \in \widetilde{H}^{1/2}(\Sigma)$, $E_1 g, E_2 g \in H^1_{0,\Gl}(\Ol)$, we have $E_1 g - E_2 g \in H^1_0(\Ol)$ and thus $L(u,E_1g - E_2 g) = 0$ for all $u \in \mathcal{J}^s(\Onl;\Ol)$.

\smallskip

We are now in position to proceed with the derivation of the Euler-Lagrange equations associated to $E_{II}$. 

\smallskip

\noindent
\underline{Step 3.} From Step 2, we have $(-\Delta_\snl)^s u \in L^2(\Ol)$ and therefore $u \in \mathcal{J}^s(\Onl;\Ol)$. Thus,
we take $v \in \mathcal{D}(\Omega)$ in \eqref{eq:weak-EIIv3}, use the integration by parts formula from Lemma \ref{lemma:ibp-snl} 
and the definition \eqref{eq:def-L-II} of the operator $L$:
\[ \begin{split}
& \int_\Omega f v \, dx \\
& = \frac{C(n,s)}{2} \iint_{Q_{\Onl}} \snl(x,y) \frac{(u(x)-u(y))(v(x)-v(y))}{|x-y|^{n+2s}} dy dx + \int_{\Ol}  \sl \nabla u \cdot \nabla v\, dx \\
& = \int_{\Onl} v (-\Delta_\snl)^s u + \int_{\Ol} v \mathcal{A} u + L(u, v) .
\end{split} \]
Because $(-\Delta_\snl)^s u = f$ in $\Onl$ and $\mathcal{A} u = f$ in $\Ol$, we get $L(u, v) = 0$, and we recover the interface condition
\[
\langle D_\Sigma u , v \rangle_{ \widetilde{H}^{1/2}(\Sigma)}  = 0 \quad \forall v \in \mathcal{D}(\Omega).
\]

To conclude, the solution $u$ to our problem with the second energy is governed by:
find $u\in \mathcal{H}_{II}$ such that
\begin{equation}\label{eq:coupled_problemIII_NEW} \tag{CP-II-NEW}
 \left\lbrace \begin{aligned}
-\div (\sl \nabla u) + \Nsnl^{\Onl} u = f  & \mbox{ in } \Ol, \\
(-\Delta_\snl)^s u = f  & \mbox{ in } \Onl, \\
D_\Sigma u = 0 & \mbox{ on } \Sigma.
\end{aligned} \right.
\end{equation}

Let $u \in \mathcal{J}^s(\Onl;\Ol)$. Since $\calA u \in L^2(\Ol)$, if either $\div (\sl \nabla u)$ or  $\Nsnl^{\Onl} u$ belong to $L^2(\Ol)$, then the other one belongs to $L^2(\Ol)$ as well. In such a case,  $D_\Sigma u$ coincides with the (outer) normal derivative of $u$ on $\Sigma$ from $\Ol$. Indeed, splitting $\calA u$, canceling out the integrals involving $\Nsnl^{\Onl} u$, and integrating by parts the local term we reach
\[
\langle D_\Sigma u , g \rangle_{ \widetilde{H}^{1/2}(\Sigma)} = L(u,Eg) =  \langle (\sl \frac{\partial u}{\partial \vn})_{|\Ol}, E g \rangle_{H^{1/2}(\partial\Ol)} =  \langle (\sl \frac{\partial u}{\partial \vn})_{|\Ol}, g \rangle_{ \widetilde{H}^{1/2}(\Sigma)}.
\]
We therefore have for all $u \in \mathcal{J}^s(\Onl;\Ol)$
\begin{equation} \label{eq:normal-derivative}
D_\Sigma u = (\sl \frac{\partial u}{\partial \vn})_{|\Ol}\mbox{ provided that } 
\div (\sl \nabla u)\in L^2(\Ol) \mbox{ or } \Nsnl^{\Onl} u\in L^2(\Ol).
\end{equation} 

\begin{remark}[Neumann boundary condition] 
\label{rem:homogeneous-Neumann}
If $s< 1/2$ and $f|_{\Onl}$ is smoother than just $L^2$, then we actually expect the solutions to exhibit a standard homogeneous Neumann condition on the interface. 
Indeed, in that case one can show that $ \Nsnl^{\Onl} u |_{\Ol} \in L^2(\Ol)$.  
This, in turn, allows us to apply known results for local operators with mixed boundary conditions to prove the regularity of solutions.
We postpone a full discussion of this topic to Section \ref{sec:regularity-small-s}.
\end{remark}

In general, we must point out that we expect neither $\div (\sl \nabla u)$ nor $\Nsnl^{\Onl} u$ to belong to $L^2(\Ol)$ and, therefore, we do not expect the boundary condition $D_\Sigma u = 0$ on $\Sigma$ to correspond to a classical homogeneous Neumann condition.
This is consistent with the fact that, as $s \to 1$, the system \eqref{eq:coupled_problemIII_NEW} should formally converge to \eqref{eq:local-transmission}.

\section{Regularity of solutions for the energy $E_{II}$.} \label{sec:regularity}
In this section, we shall focus on the energy $E_{II}$. We address the regularity of the minimizers to \eqref{eq:energy-III} over $\calH_{II}$. We shall make use of the Euler-Lagrange equations \eqref{eq:coupled_problemIII_NEW} and some known results on regularity of solutions to local and nonlocal problems on Lipschitz domains in an iterative fashion. In first place, we review these results, and afterwards combine them to deduce the regularity of solutions under some additional assumptions on the domain configuration. Since \eqref{eq:coupled_problemIII_NEW} involves the interface operator $D_\Sigma$, which may not coincide with a classical normal derivative, we first consider the case in which $\Sigma = \emptyset$ and therefore one recovers two isolated Dirichlet problems. Afterwards, we show that if $s<1/2$ and $f$ is sufficiently smooth we can actually prove that $D_\Sigma$ corresponds to a classical Neumann operator and derive regularity of solutions in that case.

We present the results in this section for specific settings of particular interest to us. However, several variants are possible with minor adjustments to certain parameters, such as the type of domain (polygonal or arbitrary Lipschitz) or the regularity of the data. This is especially true for our choice of Proposition \ref{prop:local-regularity-mixed}, which could be replaced with any suitable regularity estimate applicable to problems with mixed Dirichlet/Neumann boundary conditions.

\subsection{Review of known results}
Here, we briefly discuss some global regularity estimates in the Besov scale for solutions of PDE on Lipschitz domains that we shall need in the sequel. For the sake of completeness, we collected some preliminary material on Besov spaces in Appendix \ref{sec:Besov}.

\subsubsection{Local operators}
There is a vast literature concerning regularity of local, linear,  elliptic PDE on bounded, non-smooth or piecewise smooth domains, cf. for example \cite{JeKe95, Savare98, Eb99, EbFr99, Jo99, DiKaRe15, ScSz22} and the books \cite{Grisvard, Costabel_book}.
In this work, depending on the configuration of the domain, we need to resort to either estimates with homogeneous Dirichlet conditions (in the case $\Sigma = \emptyset$) or with mixed Dirichlet and Neumann conditions with regions that may or may not intersect, depending on whether $\overline\Sigma \cap \overline{\partial \Omega}$ is empty or not. 
In particular, the way in which regions with Neumann and Dirichlet boundary conditions intersect critically affects the resulting regularity of solutions \cite{KaMa07}.

Besides the condition $\sigma_{\max}\ge\sl \ge\sigma_{\min}>0$ that we introduced in Section \ref{sec:l-nl-coupling}, we assume the diffusivity $\sl$ to be Lipschitz continuous, 
\begin{equation} \label{eq:Lipschitz-sigma}
|\sl(x) - \sl(y) | \lesssim |x-y| , \quad \forall x, y \in \Ol.
\end{equation}

For problems with homogeneous Dirichlet boundary conditions, we have the following result \cite[Theorem 2]{Savare98} (see also \cite{JeKe95} for related results in spaces with different integrability indices). Since for the sake of approximation we can only exploit minimal regularity, we shall conform ourselves with results of such a type.

\begin{proposition} \label{prop:local-regularity-dir}
Let $\domain$ be a bounded Lipschitz domain, $\sl$ satisfy \eqref{eq:Lipschitz-sigma}, let $f \in H^{-1}(\domain)$ be given and $u \in H^1_0(\domain)$ be the weak solution to the homogeneous Dirichlet problem
\[
 \left\lbrace \begin{aligned}
-\div (\sl \nabla u) = f  & \mbox{ in } \domain, \\
u = 0 & \mbox{ on } \partial\domain.
\end{aligned} \right.
\]
If $f\in B^{-1/2}_{2,1}(\domain)$, then $u \in \dot B^{3/2}_{2,\infty}(\domain)$, with 
\begin{equation*}\label{eq:local-bes-regularity-max}
  \| u \|_{\dot{B}^{3/2}_{2,\infty}(\domain)} \lesssim  \| f \|_{B^{-1/2}_{2,1}(\domain)}.
\end{equation*}
If $f\in H^{-1+\frac\theta{2}}(\domain)$ for some $\theta \in (0,1)$, then $u \in \widetilde{H}^{1+ \frac\theta{2}}(\domain)$ with
\begin{equation}\label{eq:local-sob-regularity-intermediate}
  \| u \|_{\widetilde{H}^{1+ \frac\theta{2}}(\domain)} \lesssim  \| f \|_{H^{-1+ \frac\theta{2}}(\domain)}.
\end{equation}
\end{proposition}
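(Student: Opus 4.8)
The plan is to reduce the variable-coefficient estimate to the corresponding result for the constant-coefficient operator $-\Delta$ and then invoke a known Besov regularity theorem on Lipschitz domains. The key observation is that, since $\sl$ is Lipschitz and bounded away from $0$ and $\infty$, the equation $-\div(\sl\nabla u)=f$ can be rewritten as
\[
-\Delta u = \frac{1}{\sl}\left( f + \nabla\sl\cdot\nabla u \right) =: \tilde f,
\]
and the point is to check that the right-hand side $\tilde f$ lies in the same Besov/Sobolev space as $f$, with a norm controlled by $\|f\|$ plus a small multiple of the norm of $u$ in the target space, so that a fixed-point or absorption argument closes the estimate.

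First I would treat the endpoint case: assume $f\in B^{-1/2}_{2,1}(\domain)$. By the basic energy estimate, $u\in H^1_0(\domain)$ with $\|u\|_{H^1(\domain)}\lesssim\|f\|_{H^{-1}(\domain)}\lesssim\|f\|_{B^{-1/2}_{2,1}(\domain)}$. Next I would apply the known regularity theorem for $-\Delta$ with homogeneous Dirichlet data on a bounded Lipschitz domain (this is exactly \cite[Theorem 2]{Savare98}, which is what the proposition cites), which gives $u\in\dot B^{3/2}_{2,\infty}(\domain)$ controlled by $\|\tilde f\|_{B^{-1/2}_{2,1}(\domain)}$. The crux is then the multiplier estimate: because $\nabla\sl\in L^\infty$ and $1/\sl$ is Lipschitz (hence a pointwise multiplier on the relevant negative-order space, since $B^{-1/2}_{2,1}$ has differentiability order $-1/2$ with $|{-1/2}|<1$), one has
\[
\|\tilde f\|_{B^{-1/2}_{2,1}(\domain)} \lesssim \|f\|_{B^{-1/2}_{2,1}(\domain)} + \|\nabla\sl\cdot\nabla u\|_{B^{-1/2}_{2,1}(\domain)} \lesssim \|f\|_{B^{-1/2}_{2,1}(\domain)} + \|\nabla u\|_{B^{1/2}_{2,1}(\domain)}.
\]
Since $\dot B^{3/2}_{2,\infty}$ embeds (locally, modulo the already-controlled $L^2$ part) into the space whose gradient lies in $B^{1/2}_{2,1}$ only with a loss, the clean way is instead to run the argument first at the intermediate Sobolev scale and bootstrap: for $\theta\in(0,1)$ fixed, if $f\in H^{-1+\theta/2}(\domain)$ then by interpolation between the energy estimate and a slightly higher-regularity estimate one gets $u\in\widetilde H^{1+\theta/2}(\domain)$, so $\nabla u\in H^{\theta/2}(\domain)$ and multiplication by the Lipschitz function $1/\sl$ (resp.\ the $L^\infty$ field $\nabla\sl$) maps $H^{\theta/2}\to H^{\theta/2}$ and $H^{-1+\theta/2}\to H^{-1+\theta/2}$ boundedly; feeding $\tilde f\in H^{-1+\theta/2}$ back into the constant-coefficient estimate of \cite{Savare98} yields \eqref{eq:local-sob-regularity-intermediate}. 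The endpoint $\dot B^{3/2}_{2,\infty}$ statement then follows by the same substitution, using that $1/\sl$ and $\nabla\sl$ are multipliers on $B^{-1/2}_{2,1}(\domain)$ (safe since the order $-1/2$ is below the Lipschitz threshold) together with the fact that, by the just-proved Sobolev estimate with $\theta$ close to $1$, $\nabla u$ already sits in $B^{1/2}_{2,1}(\domain)$ after absorbing the lower-order contribution into the left-hand side.

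The main obstacle is the multiplier/product estimate at the negative-order endpoint: one must be sure that pointwise multiplication by a Lipschitz function is bounded on $B^{-1/2}_{2,1}(\domain)$ and on $H^{-1+\theta/2}(\domain)$, and that the product $\nabla\sl\cdot\nabla u$ genuinely lands in the space one needs, since $\nabla\sl$ is only $L^\infty$ (not continuous) — here one uses that the other factor $\nabla u$ has positive smoothness $\theta/2$, so the paraproduct decomposition shows the low-regularity $L^\infty\times H^{\theta/2}\to H^{\theta/2}$ (hence $\hookrightarrow H^{-1+\theta/2}$) estimate holds without needing any smoothness of $\nabla\sl$ beyond boundedness. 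A secondary technical point is the careful handling of the homogeneous Besov norm $\dot B^{3/2}_{2,\infty}$ versus the inhomogeneous scale and the extension/restriction to the Lipschitz domain $\domain$, but these are exactly the ingredients packaged in the cited references \cite{Savare98,JeKe95} and in Appendix \ref{sec:Besov}, so I would quote them rather than reprove them.
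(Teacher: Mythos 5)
First, note that the paper itself does not prove this proposition: it is quoted directly from \cite[Theorem 2]{Savare98}, whose hypotheses already allow a variable Lipschitz diffusivity, so no reduction to the constant-coefficient Laplacian is performed there. Your plan — rewriting $-\Delta u = \sl^{-1}(f+\nabla\sl\cdot\nabla u)$ and perturbing off the constant-coefficient estimate — is a legitimate alternative in principle, but as written it has two genuine gaps. (i) The product estimate you lean on is false: multiplication by a field that is merely $L^\infty$ does not map $H^{\theta/2}$ into $H^{\theta/2}$ for $\theta>0$; in the paraproduct decomposition the term where $\nabla\sl$ carries the high frequencies is not controlled by $\|\nabla\sl\|_{L^\infty}$ alone (the multiplier class of a positive-order Sobolev space is strictly smaller than $L^\infty$), so the claimed $L^\infty\times H^{\theta/2}\to H^{\theta/2}$ bound cannot be invoked. (ii) The endpoint does not close by bootstrap: your route requires $\nabla u\in B^{1/2}_{2,1}(\domain)$ to place $\nabla\sl\cdot\nabla u$ in $B^{-1/2}_{2,1}(\domain)$, but the intermediate estimate only gives $\nabla u\in H^{\theta/2}(\domain)$ with $\theta<1$, and even the target conclusion only gives $\nabla u\in B^{1/2}_{2,\infty}$, strictly weaker than $B^{1/2}_{2,1}$; since no constant in these product estimates is small, ``absorbing into the left-hand side'' is not available. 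In addition, your intermediate-scale step rests on ``interpolation with a slightly higher-regularity estimate'' that is never established, which is circular: the standard logic is the reverse — prove the endpoint $B^{-1/2}_{2,1}\to\dot B^{3/2}_{2,\infty}$ bound first and then real-interpolate with the energy estimate to obtain \eqref{eq:local-sob-regularity-intermediate}.

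Both gaps disappear once you observe that no smoothness of $\nabla u$ beyond the energy estimate is needed: $\nabla\sl\cdot\nabla u\in L^2(\domain)$ with $\|\nabla\sl\cdot\nabla u\|_{L^2(\domain)}\lesssim\|f\|_{H^{-1}(\domain)}$, and on a bounded domain $L^2(\domain)\hookrightarrow B^{-1/2}_{2,1}(\domain)$ as well as $L^2(\domain)\hookrightarrow H^{-1+\theta/2}(\domain)$. Since multiplication by the Lipschitz, uniformly positive function $\sl^{-1}$ is bounded on $H^{-1}(\domain)$ and on $L^2(\domain)$, hence by real interpolation on $B^{-1/2}_{2,1}(\domain)$, the modified datum lies in the correct space with the correct bound, the constant-coefficient endpoint estimate of \cite{Savare98} applies, and the Sobolev case follows by interpolation of the solution map. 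With that repair your reduction is sound, though it is more economical to cite \cite[Theorem 2]{Savare98} directly, as the paper does, since that theorem already covers Lipschitz coefficients.
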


We next list some results regarding problems with mixed boundary conditions. Since our goal is to apply the resulting regularity estimates to derive a priori approximation rates for finite element discretizations of problem \eqref{eq:coupled_problemIII_NEW} on two-dimensional polygonal domains, we will only state results in unweighted, $L^2$-Sobolev spaces and for problems with a constant diffusivity, cf. \cite{Gr76}. Other related and important results in that could also be applied to our problem include regularity for curved polyhedra in 3d \cite{Da92}, or classical estimates on weighted spaces \cite{Ko67}.

\begin{proposition} \label{prop:local-regularity-mixed}
Let $\domain \subset \R^2$ be a bounded polygonal domain whose boundary is the union of open sets $\Gamma_D$, $\Gamma_N$, with $\Gamma_D \cap \Gamma_N = \emptyset$, $\Gamma_D \neq \emptyset$, and $\overline \Gamma_D \cap \overline\Gamma_N$ consisting of a finite set of points. Let $f \colon \domain \to \R$ be given and $u \in H^1(\domain)$ be the weak solution to the homogeneous Dirichlet problem
\[
 \left\lbrace \begin{aligned}
- \Delta u = f  & \mbox{ in } \domain, \\
u = 0 & \mbox{ on } \Gamma_D, \\
\frac{\partial u}{\partial \vn} = 0 & \mbox{ on } \Gamma_N.
\end{aligned} \right.
\]
Let $\mathcal{C} \subset \partial \Omega$ be the (finite) set of intersecting points between $\overline\Gamma_D$ and $\overline\Gamma_N$ and corner points in $\Gamma_D$ and $\Gamma_N$. For all $c \in \mathcal{C}$, let $\omega_c \in (0, 2 \pi)$ be the opening of the ($\domain$-interior) angle at $c$ and let
 \[
 \gamma_c = \left\lbrace \begin{array}{lr} 
 1 + \frac{\pi}{2\omega_c} & \mbox{ if } c \in \overline \Gamma_D \cap \overline\Gamma_N , \\
 1+ \frac{\pi}{\omega_c} & \mbox{ if } c \in \Gamma_D \cup \Gamma_N .\\
 \end{array} \right.
 \]
Then, for 
 \[\gamma = \gamma(\domain) =  \min\{\gamma_c : \,  c  \in \mathcal{C} \}>1, \]
if $f  \in H^{\gamma-2}(\domain)$ we have $u \in H^\gamma(\domain)$ and
\[
\| u \|_{H^\gamma(\domain)} \lesssim \| f \|_{H^{\gamma-2}(\domain)}.
\]

\end{proposition}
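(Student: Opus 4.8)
The plan is to reduce the statement to the classical corner-analysis for the Laplacian with mixed boundary conditions on polygons, as developed by Grisvard. First I would recall that, since $\domain$ is a bounded polygonal domain in $\R^2$ and $\partial\domain = \overline{\Gamma_D} \cup \overline{\Gamma_N}$ with $\Gamma_D \cap \Gamma_N = \emptyset$, $\Gamma_D \neq \emptyset$, the variational problem is well-posed in $\{v \in H^1(\domain) \colon v|_{\Gamma_D} = 0\}$ by the Lax--Milgram lemma and a Poincar\'e--Friedrichs inequality (using $\Gamma_D \neq \emptyset$); this yields the unique weak solution $u \in H^1(\domain)$. Away from the finite vertex set $\mathcal{C}$, interior elliptic regularity and regularity up to a flat piece of the boundary (where a single homogeneous Dirichlet or Neumann condition is imposed) give $u \in H^{\gamma}_{loc}(\overline\domain \setminus \mathcal{C})$ for any $\gamma$ compatible with the regularity of $f$, because the Laplacian is smooth and the boundary pieces are line segments. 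So the entire obstruction to global $H^\gamma$-regularity is concentrated at the corners $c \in \mathcal{C}$.

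Next I would localize near each corner $c$ using a smooth cutoff $\chi_c$ supported in a small sector around $c$ and perform the standard partial Fourier / Mellin transform analysis in polar coordinates centered at $c$, with opening angle $\omega_c$. In the model sector, separation of variables produces singular functions of the form $r^{\lambda} \Phi(\theta)$, where the admissible exponents $\lambda$ are determined by the transcendental equation coming from the boundary conditions on the two edges meeting at $c$: for a pure Dirichlet or pure Neumann corner the exponents are $\lambda = k\pi/\omega_c$, $k \in \mathbb{Z} \setminus \{0\}$, while for a Dirichlet--Neumann corner they are $\lambda = (k + \tfrac12)\pi/\omega_c$, $k \in \mathbb{Z}$. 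The smallest positive exponent controls the regularity: $r^{\lambda} \in H^{1+\lambda - \epsilon}$ locally for every $\epsilon > 0$ (and $r^\lambda \in H^{1+\lambda}$ fails in the borderline Sobolev scale, which is exactly why the statement is sharp and is phrased with the strict inequality $\gamma > 1$). This is precisely the content of $\gamma_c = 1 + \pi/(2\omega_c)$ in the mixed case and $\gamma_c = 1 + \pi/\omega_c$ in the pure case. Since $\omega_c < 2\pi$, one checks $\gamma_c > 1$ in every case, whence $\gamma = \min_c \gamma_c > 1$.

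I would then invoke the classical decomposition theorem (e.g. \cite{Gr76}, or \cite{Grisvard}, \cite{KaMa07}): for $f \in H^{\gamma-2}(\domain)$ with $\gamma$ as above and $\gamma$ not equal to any of the forbidden critical values $1 + \lambda$, the solution splits as $u = u_{\mathrm{reg}} + \sum_{c \in \mathcal{C}} \sum_{0 < \lambda_c < \gamma - 1} \kappa_{c,\lambda} \, \chi_c(r) \, r^{\lambda_c}\Phi_{c,\lambda}(\theta)$, with $u_{\mathrm{reg}} \in H^{\gamma}(\domain)$ and stability $\|u_{\mathrm{reg}}\|_{H^\gamma} + \sum |\kappa_{c,\lambda}| \lesssim \|f\|_{H^{\gamma-2}}$. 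By the very definition $\gamma = \min_c \gamma_c$, the constraint $0 < \lambda_c < \gamma - 1$ is \emph{vacuous}: there are no singular exponents strictly below $\gamma - 1$ at any corner, so the sum is empty and $u = u_{\mathrm{reg}} \in H^\gamma(\domain)$ with $\|u\|_{H^\gamma(\domain)} \lesssim \|f\|_{H^{\gamma-2}(\domain)}$. This is the asserted estimate. The main obstacle in writing this carefully is bookkeeping at the corners that lie on $\partial\domain$ where a \emph{single} boundary condition holds but the interior angle $\omega_c$ differs from $\pi$ (genuine polygon vertices): one must make sure these are included in $\mathcal{C}$ and handled with the pure-Dirichlet/pure-Neumann exponent formula, so that reentrant vertices ($\omega_c > \pi$) are not overlooked — this is exactly why $\mathcal{C}$ in the statement is defined to contain both the mixed-type junction points \emph{and} the corner points interior to $\Gamma_D$ or $\Gamma_N$.
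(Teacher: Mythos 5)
Your overall route---well-posedness by Lax--Milgram, localization at the vertices, separation of variables/Mellin analysis giving the exponents $\pi/\omega_c$ (one-type corners) and $\pi/(2\omega_c)$ (Dirichlet--Neumann junctions), then a Grisvard-type singular decomposition---is precisely the argument behind this statement; note that the paper itself offers no proof and simply quotes the result from \cite{Gr76}, so the only comparison to be made is with that classical argument. The genuine problem is in your final step. By the very definition $\gamma=\min_c\gamma_c$, at any minimizing corner $c^*$ the smallest singular exponent equals exactly $\gamma-1$, so $\gamma$ \emph{is} one of the ``forbidden critical values $1+\lambda$'' that you yourself exclude when invoking the decomposition theorem. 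Hence the theorem cannot be applied at level $\gamma$ as you do, and the observation that the sum over $0<\lambda_c<\gamma-1$ is empty does not rescue the endpoint: generically the solution contains the borderline singular function $\chi_{c^*}\,r^{\gamma-1}\Phi_{c^*}(\theta)$, whose gradient is homogeneous of degree $\gamma-2$ near $c^*$; this function lies in $H^{\gamma-\epsilon}$ for every $\epsilon>0$, but its $H^{\gamma}$-seminorm diverges logarithmically, and its coefficient (stress intensity factor) does not vanish for generic $f$. As written, your argument therefore establishes $u\in H^{\gamma'}(\domain)$ for every $\gamma'<\gamma$ (equivalently an endpoint statement in a $B^{\gamma}_{2,\infty}$-type scale), not the asserted $u\in H^{\gamma}(\domain)$ with the stated bound.

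To close this you must either (i) run the decomposition at a non-resonant level $\gamma'<\gamma$ and state the conclusion in that form, keeping track of the blow-up of the constant as $\gamma'\to\gamma$ --- this is in fact all that the downstream uses (Proposition \ref{prop:regularity-small-s} and the rates in Proposition \ref{prop:conv-rates}) genuinely require --- or (ii) quote the precise form of Grisvard's theorem and treat the endpoint explicitly, e.g.\ by imposing the orthogonality conditions on $f$ that annihilate the exponent-$(\gamma-1)$ singular coefficients, which do not hold for general data. A minor additional remark: the inequality $\gamma>1$ in the statement merely records that $\omega_c<2\pi$ forces $\gamma_c>1$; it is not, as your parenthetical suggests, what encodes the sharpness/borderline phenomenon.
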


\subsubsection{Nonlocal operators}
Regularity results concerning nonlocal operators such as $(-\Delta_\snl)^s$ are much scarcer than their local counterparts. Unlike what is needed for the local operator, independently of whether $\Sigma = \emptyset$ or not, for the purposes of our paper we only require results involving Dirichlet boundary conditions, although of non-homogeneous type. 

At this point, we need to make some assumption on the diffusivity $\snl$. Following \cite[E1]{Savare98} and \cite[Hypothesis 2.11]{BoLiNo23}, to obtain extra regularity of solutions we require some assumptions about the smoothness of the diffusivities. In the setting of \cite{BoLiNo23}, we have $G(x,y,\rho) = \frac{C(n,s)}4 \snl(x,y) \rho^2$. Therefore, to apply the results in that paper, we only need to assume some H\"older continuity on $\snl$. More precisely, we assume that there exists $\beta \in (0,1]$ such that
\begin{equation} \label{eq:Holder-sigma}
| \snl(x,y) - \snl(x',y') | \lesssim \left(|x-x'|^\beta + |y-y'|^\beta \right).
\end{equation}
With that, we can apply \cite[Theorem 3.4]{BoLiNo23} to deduce regularity estimates for problems with homogeneous Dirichlet conditions. We briefly state a result in this regard. We refer to \cite[Corollary 1.1]{BoNo23} and \cite[Corollary 3.8]{BoLiNo23}.

\begin{proposition}
Let $s \in (0,1)$. Assume $\snl$ satisfies \eqref{eq:Holder-sigma}, and let $f\in H^{-s}(\domain)$ be given. Let $u \in \widetilde H^s(\domain)$ be the weak solution to the homogeneous Dirichlet problem
\begin{equation} \label{eq:Dirichlet-FL}
 \left\lbrace \begin{aligned}
(-\Delta_\snl)^s  u = f  & \mbox{ in } \domain, \\
u = 0 & \mbox{ in } \domain^c.
\end{aligned} \right.
\end{equation}
If $f\in B^{-s+\beta/2}_{2,1}(\domain)$, then $u \in \dot B^{s+\beta/2}_{2,\infty}(\domain)$, with 
\begin{equation}\label{eq:bes-regularity-max}
  \| u \|_{\dot{B}^{s+ \frac\beta{2}}_{2,\infty}(\domain)} \lesssim  \| f \|_{B^{-s+\frac\beta{2}}_{2,1}(\domain)}.
\end{equation}
If $f\in H^{-s+\lambda \frac\beta{2}}(\domain)$ for some $\lambda \in (0,1)$, then $u \in \widetilde{H}^{s+ \lambda \frac\beta{2}}(\domain)$ with
\begin{equation}\label{eq:sob-regularity-intermediate}
  \| u \|_{\widetilde{H}^{s+ \lambda \frac\beta{2}}(\domain)} \lesssim  \| f \|_{H^{-s+\lambda \frac\beta{2}}(\domain)}.
\end{equation}
\end{proposition}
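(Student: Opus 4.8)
The plan is to observe that the statement is a specialization, to the weighted integral fractional Laplacian with a H\"older diffusivity, of the Besov-scale regularity theory for nonlocal Dirichlet problems on Lipschitz domains developed in \cite{BoLiNo23,BoNo23}. Accordingly, the proof has three movements: recover the base energy estimate, cast the problem into the abstract framework of \cite{BoLiNo23} and check its hypotheses to obtain the endpoint shift estimate \eqref{eq:bes-regularity-max}, and finally interpolate to produce the intermediate Sobolev estimate \eqref{eq:sob-regularity-intermediate}.

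First I would record the base case. Since $\sigma_{\min}\le\snl\le\sigma_{\max}$ a.e.\ and $\snl$ is symmetric, the bilinear form $a(u,v):=\frac{C(n,s)}{2}\iint_{\domain\times\domain}\snl(x,y)\frac{(u(x)-u(y))(v(x)-v(y))}{|x-y|^{n+2s}}\,dy\,dx$ is bounded on $\widetilde{H}^s(\domain)$ and coercive there (using the fractional Poincar\'e inequality, which makes the $H^s$-seminorm a norm on $\widetilde{H}^s(\domain)$). By Lax--Milgram the weak solution $u\in\widetilde{H}^s(\domain)$ of \eqref{eq:Dirichlet-FL} exists, is unique, and obeys $\|u\|_{\widetilde{H}^s(\domain)}\lesssim\|f\|_{H^{-s}(\domain)}$; this is the lowest rung of the regularity ladder.

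Next I would match the setting to \cite[Hypothesis 2.11]{BoLiNo23} by taking the integrand $G(x,y,\rho)=\frac{C(n,s)}{4}\snl(x,y)\rho^2$, as indicated before the statement. One checks that $G$ is measurable in $(x,y)$, symmetric in $(x,y)$ (from symmetry of $\snl$), even and convex in $\rho$, satisfies the two-sided quadratic bound $\frac{C(n,s)}{4}\sigma_{\min}\rho^2\le G(x,y,\rho)\le\frac{C(n,s)}{4}\sigma_{\max}\rho^2$, and satisfies the required $\beta$-H\"older continuity $|G(x,y,\rho)-G(x',y',\rho)|\lesssim(|x-x'|^\beta+|y-y'|^\beta)\rho^2$, which is exactly \eqref{eq:Holder-sigma}. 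With these verifications in hand, \cite[Theorem 3.4]{BoLiNo23} (equivalently \cite[Corollary 3.8]{BoLiNo23}) applies and yields that the solution operator maps $B^{-s+\beta/2}_{2,1}(\domain)$ into $\dot{B}^{s+\beta/2}_{2,\infty}(\domain)$ with the quantitative bound \eqref{eq:bes-regularity-max}. The intermediate Sobolev estimate \eqref{eq:sob-regularity-intermediate} then follows by real interpolation, as in \cite[Corollary 1.1]{BoNo23}: the solution operator is bounded $H^{-s}(\domain)\to\widetilde{H}^s(\domain)$ (base case) and $B^{-s+\beta/2}_{2,1}(\domain)\to\dot{B}^{s+\beta/2}_{2,\infty}(\domain)$ (endpoint), so applying the $(\cdot,\cdot)_{\lambda,2}$ functor and using the identifications $(H^{-s}(\domain),B^{-s+\beta/2}_{2,1}(\domain))_{\lambda,2}=H^{-s+\lambda\beta/2}(\domain)$ and $(\widetilde{H}^s(\domain),\dot{B}^{s+\beta/2}_{2,\infty}(\domain))_{\lambda,2}=\widetilde{H}^{s+\lambda\beta/2}(\domain)$ (collected in Appendix~\ref{sec:Besov}) gives the claim for every $\lambda\in(0,1)$.

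I expect the main obstacle to be the bookkeeping in the second movement: one must confirm that \eqref{eq:Holder-sigma}, phrased symmetrically in $x$ and $y$, is precisely the coefficient regularity demanded by \cite[Hypothesis 2.11]{BoLiNo23} for $G$, and that the differentiability gain and the lower summability indices are transcribed correctly---the target is $\dot{B}^{s+\beta/2}_{2,\infty}$ (gain $\beta/2$, \emph{not} $\beta$) with summability $\infty$, while the source sits in $B^{-s+\beta/2}_{2,1}$ with summability $1$, and these endpoint indices are genuinely sharp on a general Lipschitz domain and cannot be tightened. The interpolation step is then routine once the Besov apparatus of the appendix is available; the only subtlety there is that $\dot{B}^{s+\beta/2}_{2,\infty}(\domain)$ interpolates with $\widetilde{H}^s(\domain)$ into the $\widetilde{H}^{\bullet}(\domain)$ scale, which is why the conclusion is stated with $\widetilde{H}^{s+\lambda\beta/2}(\domain)$ rather than an unrestricted Sobolev space.
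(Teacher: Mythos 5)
Your proposal is correct and takes essentially the same route as the paper, which does not prove this proposition internally but states it as a consequence of \cite[Theorem 3.4, Corollary 3.8]{BoLiNo23} and \cite[Corollary 1.1]{BoNo23}, after observing---exactly as you do---that the choice $G(x,y,\rho)=\frac{C(n,s)}{4}\snl(x,y)\rho^2$ satisfies the needed hypothesis by the H\"older condition \eqref{eq:Holder-sigma}, with the intermediate estimate \eqref{eq:sob-regularity-intermediate} obtained by interpolating the basic energy bound against the endpoint Besov shift. The only small inaccuracy is attributing the interpolation-space identifications to the paper's appendix, which collects only embedding results; those identifications belong to the cited references rather than to Appendix \ref{sec:Besov}.
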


\begin{remark}[optimality]
The regularity estimate \eqref{eq:bes-regularity-max} is the maximal one can expect even for a smooth right hand side $f$ and a smooth domain. As a matter of fact, the reduced regularity of solutions is caused by a rough boundary behavior, as illustrated by a simple example:
\[
\domain = B(0,1)\subset\R^n, \ f \equiv 1, \ \snl \equiv 1 \ \Rightarrow \ u(x) = c(n,s) (1-|x|^2)^s_+,
\]
where $c(n,s)= 2^{-2s}\Gamma(\frac{n}{2})\left(\Gamma(\frac{n+2s}{2})\Gamma(1+s)\right)^{-1}$.
Regularity of solutions to problem \eqref{eq:Dirichlet-FL} with a constant diffusivity has been thoroughly studied in recent years; in particular, reference \cite{RosOtonSerra} obtains sharp estimates on the boundary behavior of solutions, that leads to weighted H\"older regularity estimates. As a conclusion, the boundary expansion
\[
u(x) \simeq d(x, \partial\domain)^s \varphi(x),
\]
where $\varphi$ is smooth, turns out to be generic.
\end{remark}

To prove regularity results of solutions of problems with non-homogeneous exterior conditions, we only need to exploit the mapping properties of the operator $(-\Delta_\snl)^s$. For completeness, we provide a proof of the following lemma in detail.

\begin{lemma}\label{lemma:mapping-Laplacian}
Let $s \in (0,1/2)$, $\alpha \in (0, 1-2s)$, and the diffusivity $\snl$ satisfy \eqref{eq:Holder-sigma} with $\beta \ge \alpha$. Then, the weighted fractional Laplacian \eqref{eq:def-wifl} is a bounded operator from $H^{\alpha+2s} (\R^n)$ to $B^{\alpha}_{2,\infty}(\R^n).$
\end{lemma}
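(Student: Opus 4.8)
The plan is to decompose the weighted fractional Laplacian $(-\Delta_\snl)^s$ into a leading-order constant-coefficient piece plus a lower-order perturbation whose kernel is controlled by the H\"older continuity of $\snl$. First I would freeze the diffusivity on the diagonal: write
\[
(-\Delta_\snl)^s u(x) = \snl(x,x)\, (-\Delta)^s u(x) + R u(x),
\]
where, using the symmetry and the pointwise definition \eqref{eq:def-wifl},
\[
R u(x) = C(n,s)\,\mbox{p.v.}\int_{\rn} \bigl(\snl(x,y) - \snl(x,x)\bigr) \frac{u(x)-u(y)}{|x-y|^{n+2s}}\,dy.
\]
The first term is handled by the classical mapping property of the (constant-coefficient) fractional Laplacian, $(-\Delta)^s \colon H^{\alpha+2s}(\rn) \to H^\alpha(\rn) \hookrightarrow B^\alpha_{2,\infty}(\rn)$, combined with the fact that $\snl(x,x)$ is bounded and, by \eqref{eq:Holder-sigma} with $\beta \ge \alpha$, lies in $C^{0,\alpha}(\rn) \subset B^\alpha_{\infty,\infty}(\rn)$; multiplication by a $C^{0,\alpha}$ function is bounded on $B^\alpha_{2,\infty}$ for $\alpha \in (0,1)$ (a standard paraproduct or direct Besov-norm estimate). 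So the real work is to show $R \colon H^{\alpha+2s}(\rn) \to B^\alpha_{2,\infty}(\rn)$ is bounded.

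For the remainder term I would exploit that the kernel defect $|\snl(x,y)-\snl(x,x)| \lesssim |x-y|^\beta \le |x-y|^\alpha$ for $|x-y|\le 1$ (splitting the integral at $|x-y|=1$ and treating the far field separately using boundedness of $\snl$ and decay of the kernel, which only costs $s$ derivatives and is harmless since $\alpha + 2s < 1 < \alpha+2s$... rather, since $2s > 2s$ trivially covers it). The near-field contribution behaves like a fractional Laplacian of order $s - \beta/2 \le s$, more precisely $Ru$ is, up to the cutoff, comparable to an operator with kernel $|x-y|^{-n-(2s-\alpha)}$ applied to $u$, hence it maps $H^{\alpha+2s}(\rn)$ into $H^{\alpha+2s-(2s-\alpha)}(\rn) = H^{2\alpha}(\rn)$, and in particular into $L^2$ and into $B^\alpha_{2,\infty}$ as long as $2\alpha \ge \alpha$, i.e.\ always. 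To make this rigorous I would estimate $\|Ru\|_{B^\alpha_{2,\infty}}$ via the difference characterization of Besov norms, $\|Ru\|_{B^\alpha_{2,\infty}} \simeq \|Ru\|_{L^2} + \sup_{|h|>0} |h|^{-\alpha} \|Ru(\cdot+h) - Ru(\cdot)\|_{L^2}$, bounding the $L^2$ piece directly by Minkowski's integral inequality and Plancherel, and the modulus-of-continuity piece by splitting $Ru(x+h)-Ru(x)$ into the contribution of the shifted kernel and of the shifted $u$; each gains a factor $|h|^\alpha$ either from the H\"older continuity of $y\mapsto\snl(\cdot,y)$ or from the $H^{\alpha+2s}$ regularity of $u$ against a kernel singular of order $n+2s$ (here the constraint $\alpha < 1-2s$, equivalently $\alpha+2s<1$, guarantees the relevant integrals of $|z|^{\alpha - 2s}$ near zero converge and the kernel after differencing is still integrable).

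The main obstacle I expect is the careful bookkeeping in the difference estimate for $R$: one must simultaneously keep the singularity of $|x-y|^{-n-2s}$ under control, exploit exactly $\alpha$ units of H\"older regularity of $\snl$ (no more is available, hence the hypothesis $\beta \ge \alpha$ rather than $\beta$ large), and absorb the second difference in $u$ using $\alpha+2s<1$ so that a single first-order difference of $u$ combined with the kernel integral is finite. A clean way to organize this is to first prove the statement for $\snl \equiv$ const (pure $(-\Delta)^s$, trivial by Fourier), then prove $R \colon H^{\alpha+2s}(\rn) \to H^{2\alpha}(\rn) \cap L^2(\rn)$ by the kernel estimates above, and finally invoke the continuous embedding $H^{2\alpha}(\rn) \hookrightarrow B^\alpha_{2,\infty}(\rn)$ (since $2\alpha \ge \alpha$) together with the $C^{0,\alpha}$-multiplier property for the diagonal term; summing the two contributions yields the claim with the constant depending on $n,s,\alpha,\beta$ and the $L^\infty$ and H\"older seminorms of $\snl$.
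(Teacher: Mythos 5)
Your primary route---freezing the diffusivity on the diagonal, handling $\snl(x,x)(-\Delta)^s u$ by the classical mapping property of $(-\Delta)^s$, and estimating the remainder through the first-difference characterization of $B^\alpha_{2,\infty}$ by splitting each translated difference into a shifted-kernel and a shifted-$u$ contribution---is essentially the paper's proof. There the freezing is performed at $\snl(x+h,x+h)$ inside $(\tau_h-I)(-\Delta_\snl)^s u$, producing three terms that correspond precisely to your frozen part, your shifted-$u$ piece, and your shifted-coefficient piece; the quantitative inputs are the ones you identify: $(-\Delta)^s\colon H^{\alpha+2s}(\R^n)\to H^{\alpha}(\R^n)\subset B^{\alpha}_{2,\infty}(\R^n)$, the factor $|h|^{\beta}\le|h|^{\alpha}$ from \eqref{eq:Holder-sigma} multiplied against $\int_{\R^n}|u(x)-u(y)|\,|x-y|^{-n-2s}dy$, which is controlled in $L^2$ by $\|u\|_{B^{2s}_{2,1}(\R^n)}\lesssim\|u\|_{H^{\alpha+2s}(\R^n)}$ (this is where $\alpha>0$ enters), and an interpolation estimate of the type $\|(\tau_h-I)u\|_{B^{2s-\theta}_{2,1}(\R^n)}\lesssim|h|^{\alpha+\theta}\|u\|_{H^{\alpha+2s}(\R^n)}$, which is exactly where $\alpha+2s<1$ is needed, since a single first difference can gain at most order one. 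Your multiplier step for the frozen part is also fine: for $\alpha\in(0,1)$ the product rule for first differences shows that multiplication by the bounded function $x\mapsto\snl(x,x)$, which is $C^{0,\alpha}$ by \eqref{eq:Holder-sigma} and $\beta\ge\alpha$, is bounded on $B^{\alpha}_{2,\infty}(\R^n)$, the extra commutator term being of size $|h|^{\beta}\,\|(-\Delta)^s u\|_{L^2(\R^n)}$.

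The one genuine flaw is the ``clean way to organize'' you propose at the end: from the pointwise bound $|\snl(x,y)-\snl(x,x)|\lesssim|x-y|^{\beta}$ you cannot conclude that $R\colon H^{\alpha+2s}(\R^n)\to H^{2\alpha}(\R^n)$. A size estimate on the kernel yields, via Minkowski's inequality, only an $L^2$ bound of the form $\|Ru\|_{L^2(\R^n)}\lesssim\|u\|_{B^{2s-\theta}_{2,1}(\R^n)}$; any positive-order smoothness of $Ru$ must come from regularity of the kernel in $x$, i.e.\ again from \eqref{eq:Holder-sigma}, and under the stated hypothesis $\beta\ge\alpha$ (for instance $\beta=\alpha$) the translation estimates give at best order $\min\{\beta,\alpha+\theta\}$ with $\theta<\min\{2s,\beta\}$, which suffices for $B^{\alpha}_{2,\infty}$ but in general falls short of $2\alpha$. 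So drop the detour through $H^{2\alpha}\hookrightarrow B^{\alpha}_{2,\infty}$ and carry out the direct difference estimate for $Ru$ in $B^{\alpha}_{2,\infty}$ that you sketch (adding the $L^2$ bound for $(-\Delta_\snl)^s u$ itself, which follows by the same Minkowski/Besov argument); with that correction your argument coincides with the one in Appendix \ref{sec:mapping-Laplacian}. The aside about the far field costing ``$s$ derivatives'' is garbled as written, but the far field is indeed harmless since the kernel is integrable there and only boundedness of $\snl$ and $\|u\|_{L^2(\R^n)}$ are used.
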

\begin{proof}
See Appendix \ref{sec:mapping-Laplacian}.
\end{proof}

We state the following theorem in a fashion that we will use in Section \ref{sec:regularity-small-s}. 

\begin{theorem} \label{thm:reg-nl-s-small}
Let $\domain$ be a bounded Lipschitz domain, $s \in (0,1/2)$, $g \in H^{\alpha+2s}(\domain^c)$ for some $\alpha \in (0, 1-2s)$, $\snl$ satisfy \eqref{eq:Holder-sigma} with $\beta \ge \alpha$, $f\colon \domain \to \R$, and $u \in H^s(\R^n)$ be the weak solution to the non-homogeneous Dirichlet problem
\[
 \left\lbrace \begin{aligned}
(-\Delta_\snl)^s  u = f  & \mbox{ in } \domain, \\
u = g & \mbox{ in } \domain^c.
\end{aligned} \right.
\]
If $f\in H^{-s+\theta \frac\beta{2}}(\domain)$ for some $\theta \in (0,1)$, then $u \in H^{s+\min\{ \alpha+s-\epsilon, \theta\frac\beta2 \}}(\R^n)$  for all $\epsilon > 0$ and
\begin{equation} \label{eq:max-reg-p-big}
\| u \|_{H^{s+\min\{ \alpha+s-\epsilon, \theta\frac\beta2 \}}(\R^n)} \lesssim  \| f  \|_{H^{-s+\theta\frac\beta2}(\domain)} + \| g \|_{H^{\alpha+2s} (\domain^c)}.
\end{equation}
The hidden constant above behaves as $\epsilon^{-1/2}$ if $\alpha+s \le \theta\frac\beta2$ and as $(\alpha+s-\theta\frac\beta2)^{-1}$ if $\alpha +s > \theta \frac\beta{2}$.
\end{theorem}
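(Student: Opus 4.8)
The plan is to reduce the non-homogeneous exterior-value problem to a homogeneous Dirichlet problem for $(-\Delta_\snl)^s$ by subtracting a lifting of $g$, and then to invoke the global regularity estimate \eqref{eq:sob-regularity-intermediate}. Since $\alpha + 2s < 1$ (because $\alpha \in (0, 1-2s)$), one can extend $g$ to some $\tilde g \in H^{\alpha+2s}(\R^n)$ with $\tilde g|_{\domain^c} = g$ and $\|\tilde g\|_{H^{\alpha+2s}(\R^n)} \lesssim \|g\|_{H^{\alpha+2s}(\domain^c)}$; the complement of a bounded Lipschitz domain admits such extensions. Setting $w := u - \tilde g$, we have $w \in H^s(\R^n)$ (as $\alpha + 2s > s$) and $w = 0$ in $\domain^c$, hence $w \in \widetilde{H}^s(\domain)$; by linearity of \eqref{eq:def-wifl}, $w$ is the weak solution of the homogeneous Dirichlet problem with right-hand side
\[
\tilde f := f - (-\Delta_\snl)^s \tilde g \quad \text{in } \domain .
\]

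Next, I would identify the regularity of $\tilde f$. Because $\alpha \in (0, 1-2s)$ and $\snl$ obeys \eqref{eq:Holder-sigma} with $\beta \ge \alpha$, Lemma \ref{lemma:mapping-Laplacian} yields $(-\Delta_\snl)^s \tilde g \in B^\alpha_{2,\infty}(\R^n)$ with norm $\lesssim \|g\|_{H^{\alpha+2s}(\domain^c)}$. Restricting to $\domain$ and using the continuous embedding $B^\alpha_{2,\infty}(\domain) \hookrightarrow H^{\alpha - \epsilon}(\domain)$ --- whose constant behaves like $\epsilon^{-1/2}$ as $\epsilon \to 0^+$ --- together with the hypothesis $f \in H^{-s+\theta\beta/2}(\domain)$, one gets $\tilde f \in H^{r}(\domain)$ for
\[
r := \min\Big\{-s + \tfrac{\theta\beta}{2},\ \alpha - \epsilon\Big\},
\]
with $\|\tilde f\|_{H^r(\domain)} \lesssim \|f\|_{H^{-s+\theta\beta/2}(\domain)} + \epsilon^{-1/2}\|g\|_{H^{\alpha+2s}(\domain^c)}$ (the $\epsilon^{-1/2}$ being genuinely needed only when $r = \alpha - \epsilon$, i.e.\ when $\alpha + s \le \theta\beta/2$; otherwise one instead takes the Besov--Sobolev gap equal to the fixed positive number $\alpha + s - \theta\beta/2$, which introduces a factor blowing up as $\alpha + s \downarrow \theta\beta/2$).

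Finally, I would apply \eqref{eq:sob-regularity-intermediate} to $w$ with $\lambda := 2(r+s)/\beta$, chosen so that $-s + \lambda\beta/2 = r$. One must check $\lambda \in (0,1)$: $\lambda > 0$ since $r > -s$, and $\lambda < 1$ since $r \le -s + \theta\beta/2 < -s + \beta/2$, where $\theta < 1$ is used in an essential way; moreover $1 - \lambda \ge 1 - \theta$ stays bounded away from $0$, so the constant in \eqref{eq:sob-regularity-intermediate} does not degenerate. This gives $w \in \widetilde{H}^{\,2s+r}(\domain) \subset H^{2s+r}(\R^n)$, and, adding back $\tilde g \in H^{\alpha+2s}(\R^n)$,
\[
u = w + \tilde g \in H^{\min\{2s+r,\ \alpha+2s\}}(\R^n) = H^{\,s + \min\{\alpha + s - \epsilon,\ \theta\beta/2\}}(\R^n),
\]
while chaining the previous norm bounds produces \eqref{eq:max-reg-p-big}. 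The only genuinely delicate point is this bookkeeping: matching the Sobolev regularity of $f$ against the Besov regularity of $(-\Delta_\snl)^s\tilde g$, verifying that the parameter $\lambda$ produced stays admissibly below $1$, and keeping track of the embedding constants --- which is what yields the asserted behaviour of the hidden constant in terms of $\epsilon$ and of $\alpha + s - \theta\beta/2$.
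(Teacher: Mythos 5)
Your proposal is correct and follows essentially the same route as the paper's proof: extend $g$ to $H^{\alpha+2s}(\R^n)$, subtract the lifting, apply Lemma \ref{lemma:mapping-Laplacian} and the embedding \eqref{eq:Besov-Sobolev-emb} to place the modified right-hand side in the appropriate Sobolev space (with the same two-case split governed by the sign of $\alpha+s-\theta\frac\beta2$), and conclude via \eqref{eq:sob-regularity-intermediate} with $\lambda = \frac{2(r+s)}{\beta}$, which reproduces the paper's choices $\lambda = \frac{2(\alpha+s-\epsilon)}{\beta}$ and $\lambda=\theta$. The bookkeeping of the hidden constants matches the stated $\epsilon^{-1/2}$ and the blow-up as $\alpha+s \downarrow \theta\frac\beta2$.
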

\begin{proof}
As  $\domain$ is a bounded Lipschitz domain and $g \in H^{\alpha+2s}(\domain^c)$, we extend it to $G \in H^{\alpha+2s}(\R^n)$. By using Lemma \ref{lemma:mapping-Laplacian}, we have $(-\Delta_\snl)^s G \in B^{\alpha}_{2,\infty}(\R^n)$ and then the function $\tilde u = u - G$ is a weak solution of the homogeneous Dirichlet problem
\[
 \left\lbrace \begin{aligned}
(-\Delta_\snl)^s \tilde u = \tilde f & \mbox{ in } \domain, \\
\tilde u = 0 & \mbox{ in } \domain^c,
\end{aligned} \right.
\]
with $\tilde f =  f  - (-\Delta_\snl)^s G \in B^{\alpha}_{2,\infty}(\domain)$. 

In case $\alpha \le -s + \theta \frac\beta{2}$, we use the embedding $B^{\alpha}_{2,\infty}(\domain) \subset H^{\alpha -\epsilon}(\domain)$ for some arbitrary $\epsilon > 0$ (see \eqref{eq:Besov-Sobolev-emb}), that gives rise to 
\[
\| \tilde f  \|_{H^{\alpha -\epsilon}(\domain)} \lesssim \| f  \|_{H^{-s + \theta \frac\beta{2}}(\domain)} + \epsilon^{-1/2} \| (-\Delta_\snl)^s G \|_{B^{\alpha}_{2,\infty}(\domain)},
\]
and use \eqref{eq:sob-regularity-intermediate} with $\lambda = \frac{2(\alpha+s-\epsilon)}\beta \in (0,1)$ to obtain
\[
  \| \tilde u \|_{\widetilde{H}^{\alpha +2s - \epsilon}(\domain)} \lesssim  \| f  \|_{H^{-s + \theta \frac\beta{2}}(\domain)} + \epsilon^{-1/2} \| g \|_{H^{\alpha+2s} (\domain^c)} .
\]

For $\alpha > -s + \theta \frac\beta2$ we again resort to \eqref{eq:Besov-Sobolev-emb}, exploit the embedding $B^{\alpha}_{2,\infty}(\domain) \subset H^{-s+\theta\frac\beta2}(\domain)$ 
with a constant $\mathcal{O}(\alpha+s-\theta\frac\beta2)$,
and  use \eqref{eq:sob-regularity-intermediate} with $\lambda = \theta$:
\[
  \| \tilde u \|_{\widetilde H^{s+\theta \frac\beta2}(\domain)} \lesssim  \| \tilde f  \|_{H^{-s+\theta\frac\beta2}(\domain)} \lesssim  
  \| f  \|_{H^{-s+\theta\frac\beta2}(\domain)} + \frac{ \| g \|_{H^{\alpha+2s} (\domain^c)}}{\alpha+s-\theta\frac\beta2} .
\]
\end{proof}

\begin{remark}[limitations on the value of $s$]
We proved Lemma \ref{lemma:mapping-Laplacian} and Theorem \ref{thm:reg-nl-s-small} for $s$ in the range $(0,1/2)$. The main reason behind this limitation becomes apparent upon inspection of our argument in Appendix \ref{sec:mapping-Laplacian}: we used first differences. For sufficiently smooth diffusivities, we expect the weighted fractional Laplacian to be an operator of order $2s$, but if $2s > 1$ then one cannot derive such a regularity by using first differences. If $\alpha \in (0, 2-2s)$ and $\snl$ is locally $C^\beta$ near the diagonal $\{x = y\}$ for some $\beta \ge \alpha$, then we expect $(-\Delta_\snl)^s \colon H^{\alpha+2s} (\R^n) \to B^{\alpha}_{2,\infty}(\R^n)$ to be a bounded operator.

However, we emphasize that we are interested in applying Theorem \ref{thm:reg-nl-s-small} in the case where $\Sigma \neq \emptyset$ and the operator $D_\Sigma$ coincides with the classical Neumann operator; as we already anticipated, and show in Section \ref{sec:regularity-small-s} below, we expect this only to happen provided $s<1/2$.
 \end{remark}

\subsection{A simple setting: isolated subdomains} \label{sec:isolated-subdomains}
From our discussion in Section \ref{subsec_Euler-Lagrange_III}, and in particular from the resulting Euler-Lagrange equations \eqref{eq:coupled_problemIII_NEW}, we note that  the problem simplifies notably in the case $\overline\Ol \cap \overline\Onl = \emptyset$. Indeed, in such a case, for all $x \in \Ol$ and $y \in \Onl$ we have $|x-y| \ge d(\Ol, \Onl) > 0$ and thus we have the pointwise bound
\[ \begin{split}
|\Nsnl^{\Onl} u (x)| & \le \frac{C(n,s) \, \| \snl \|_{L^\infty(\Ol \times \Onl)}}{d(\Ol, \Onl)^{n+2s}}  \int_{\Onl} |u(x)-u(y)| dy \\
& \lesssim  \left[ |u(x)| + \| u \|_{L^2(\Onl)} \right] 
\end{split} \]
for all $x \in \Ol$, with a hidden constant depending on $n,s, \snl, d(\Ol, \Onl),$ and $|\Onl|$.
Therefore, because $u \in L^2(\Omega)$ with $\| u \|_{L^2(\Omega)}\lesssim \| f \|_{L^2(\Omega)}$ --this is  because the energy space satisfies $\mathcal{H}_{II}\subset L^2(\Omega)$--, we immediately deduce
\[
\Nsnl^{\Onl} u \in L^2(\Ol) , \quad \mbox{with} \quad \| \Nsnl^{\Onl} u \|_{L^2(\Ol)} \lesssim  \| f \|_{L^2(\Omega)}. 
\]
Therefore, in this setting, in order to prove regularity of solutions within $\Ol$ it suffices to apply Proposition \ref{prop:local-regularity-dir}.
Additionally, in the subdomain $\Onl$ we can exploit the fact that $u|_{\Onl^c}$ vanishes on a neighborhood of $\Onl$. Namely, if we denote by $U$ the zero-extension of $u|_{\Onl^c}$ to $\Onl$, for $x \in \Onl$ we have
\[ \begin{split}
(-\Delta_\snl)^s U (x) & = C(n,s) \int_{\R^n} \snl(x,y) \frac{U(x) - U(y)}{|x-y|^{n+2s}} \, dy \\ & = - C(n,s) \int_{\Ol} \snl(x,y) \frac{U(y)}{|x-y|^{n+2s}} \, dy
\end{split} \]
and, because $d(\Ol, \Onl) > 0$, if $\snl$ satisfies \eqref{eq:Holder-sigma} we deduce $(-\Delta_\snl)^s U |_{\Onl} \in C^\beta(\overline\Onl) \subset H^{\beta-\epsilon}(\Onl)$ for all $\epsilon > 0$. We can therefore mimic the proof of Theorem \ref{thm:reg-nl-s-small} with this modification and arbitrary $s \in (0,1)$, and obtain enhanced regularity estimates. We summarize our discussion in the following result.

\begin{proposition}[isolated subdomains] \label{prop:reg-isolated}
Let $s \in (0,1)$.
Assume $\overline\Ol \cap \overline \Onl = \emptyset$, $\beta \in [0,1)$ with $\beta \ge 2s - 1$, $f \in B^{-s+\beta/2}_{2,1}(\Omega)$, and let $u \in \calH_{II}$ be the minimizer of \eqref{eq:energy-III}. Then, $u|_{\Ol} \in \dot B^{3/2}_{2,\infty}(\Ol)$ and $u|_{\Onl} \in \dot B^{s+\beta/2}_{2,\infty}(\Onl)$.
\end{proposition}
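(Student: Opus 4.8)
The plan is to split the problem into two essentially independent pieces, corresponding to the two subdomains, and to invoke the regularity results from the ``Review of known results'' subsection on each piece. The starting point is the Euler-Lagrange system \eqref{eq:coupled_problemIII_NEW}. Since $\overline\Ol \cap \overline\Onl = \emptyset$, the coupling term $\Nsnl^{\Onl} u$ on $\Ol$ and the exterior data on $\Onl$ are both governed by a positive distance $d(\Ol,\Onl) > 0$, which makes these terms harmless. First I would record that $\calH_{II} \subset L^2(\Omega)$ with $\| u \|_{L^2(\Omega)} \lesssim \| f \|_{L^2(\Omega)}$, so in particular $u|_{\Ol} \in L^2(\Ol)$ and $u|_{\Onl^c} \in L^2(\Onl^c)$; this is the only a priori control we start from.

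For the local part, I would use the pointwise estimate on $\Nsnl^{\Onl} u$ displayed just before the proposition statement: because $|x-y| \ge d(\Ol,\Onl)$ for $x \in \Ol$, $y \in \Onl$, the kernel is bounded there, and one gets $|\Nsnl^{\Onl} u(x)| \lesssim |u(x)| + \|u\|_{L^2(\Onl)}$, hence $\Nsnl^{\Onl} u \in L^2(\Ol)$ with norm $\lesssim \|f\|_{L^2(\Omega)}$. Then the first equation of \eqref{eq:coupled_problemIII_NEW} reads $-\div(\sl \nabla u) = f - \Nsnl^{\Onl} u =: \tilde f$ in $\Ol$, with $\tilde f \in L^2(\Ol)$; and since $\Sigma = \emptyset$ in this configuration, $u|_{\Ol}$ solves a homogeneous Dirichlet problem on $\Ol$. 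Here $f \in B^{-s+\beta/2}_{2,1}(\Omega) \hookrightarrow B^{-1/2}_{2,1}(\Omega)$ provided $-s+\beta/2 \ge -1/2$, i.e. $\beta \ge 2s-1$, which is exactly the hypothesis; and $\Nsnl^{\Onl} u \in L^2(\Ol) \hookrightarrow B^{-1/2}_{2,1}(\Ol)$ trivially, so $\tilde f \in B^{-1/2}_{2,1}(\Ol)$. Applying Proposition \ref{prop:local-regularity-dir} gives $u|_{\Ol} \in \dot B^{3/2}_{2,\infty}(\Ol)$.

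For the nonlocal part, I would argue as in the paragraph preceding the statement: $u|_{\Onl}$ solves $(-\Delta_\snl)^s u = f$ in $\Onl$ with exterior datum $u|_{\Onl^c}$, but since $u|_{\Onl^c}$ vanishes on a neighborhood of $\overline\Onl$ (because $d(\Ol,\Onl)>0$, so the relevant part of $\Onl^c$ near $\Onl$ is inside $\Ol$... more precisely, the exterior values that ``see'' $\Onl$ through the kernel at finite distance are those in $\Ol$), the contribution of the exterior data to $(-\Delta_\snl)^s$ on $\Onl$ is, writing $U$ for the zero-extension of $u|_{\Onl^c}$, given by $-C(n,s)\int_{\Ol} \snl(x,y) U(y) |x-y|^{-n-2s}\,dy$, which under \eqref{eq:Holder-sigma} lies in $C^\beta(\overline\Onl) \subset H^{\beta-\epsilon}(\Onl)$. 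Thus $u - U$ (extended by zero outside $\Onl$) solves a homogeneous Dirichlet problem for $(-\Delta_\snl)^s$ with right-hand side $f$ plus a $C^\beta$ term; since $f \in B^{-s+\beta/2}_{2,1}(\Onl)$ dominates, one applies \eqref{eq:bes-regularity-max} to conclude $u|_{\Onl} \in \dot B^{s+\beta/2}_{2,\infty}(\Onl)$. The main obstacle — really the only delicate point — is bookkeeping the exterior-data term on $\Onl$ correctly: one must check that the only part of $\Onl^c$ that interacts with $\Onl$ at a bounded distance is contained in $\Ol$, so that $U|_{\Ol}$ is the relevant object and the kernel $\snl(x,y)|x-y|^{-n-2s}$ is smooth for $x \in \Onl$, $y \in \Ol$; once this is in place the H\"older regularity of the convolution-type integral and the embeddings into the Besov scale are routine, and the threshold conditions on $\beta$ match up precisely with what is needed to place $f$ in $B^{-1/2}_{2,1}$ locally and in $B^{-s+\beta/2}_{2,1}$ respectively.
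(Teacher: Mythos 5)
Your proposal is correct and follows essentially the same route as the paper: the pointwise bound on $\Nsnl^{\Onl} u$ over $\Ol$ (using $d(\Ol,\Onl)>0$ and $u\in L^2(\Omega)$) followed by Proposition \ref{prop:local-regularity-dir}, and on $\Onl$ the subtraction of the zero-extension $U$ of $u|_{\Onl^c}$, whose weighted fractional Laplacian is $C^\beta(\overline\Onl)$, followed by \eqref{eq:bes-regularity-max}. Your explicit verification that $\beta \ge 2s-1$ is exactly what places $f$ in $B^{-1/2}_{2,1}$ on $\Ol$ is the same role this hypothesis plays in the paper's argument.
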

\begin{proof}
Let $u \in \calH_{II}$ be the minimizer of \eqref{eq:energy-III}. According to our discussion above, on $\Ol$ we have
\[
-\div (\sl \nabla u)  = f  - \Nsnl^{\Onl} u \in B^{-1/2}_{2,1}(\Ol), \quad u |_{\partial \Ol} = 0,
\]
and applying Proposition \ref{prop:local-regularity-dir} we deduce $u|_{\Ol} \in \dot B^{3/2}_{2,\infty}(\Ol)$. To prove the claimed regularity of $u|_{\Onl}$, we point out that the function $\tilde u = u - U$ satisfies the homogeneous Dirichlet problem
\[
(-\Delta_\snl)^s \tilde{u}  = f  - (-\Delta_\snl)^s U \in B^{-s+\beta/2}_{2,1}(\Onl), \quad \tilde{u} |_{\Onl^c} = 0,
\]
and we can apply \eqref{eq:bes-regularity-max}.
\end{proof}

\begin{remark}[generalizations] 
The assumption $\beta \ge 2s - 1$ in the previous proposition is by no means fundamental. We included it for the clarity of the statement. By the same argument, if $s>1/2$ one can also prove regularity in case $\beta < 2s - 1$, only that in such a case one would have a reduced regularity over $\Ol$. Similarly, we can also apply interpolation arguments to deduce regularity estimates in case $f$ is less regular than $B^{-s+\beta/2}_{2,1}(\Omega)$. 
\end{remark}

\subsection{Transmission problems -- the case $s < 1/2$} \label{sec:regularity-small-s}
We now assume that $s \in (0,1/2)$,  $\alpha \in (2s, s+1/2)$, and $f \in H^{\alpha - 2s}(\Onl) \subset L^2(\Onl)$.
As anticipated in Remark \ref{rem:homogeneous-Neumann}, because $s<1/2$ we can actually show that the solution $u$ satisfies a homogeneous Neumann boundary condition on the interface from $\Ol$.

\begin{lemma}
Let $\Omega$ be a bounded Lipschitz domain, $\snl$ satisfy \eqref{eq:Holder-sigma}, $u$ be a minimizer of the energy $E_{II}$ with $s \in (0, 1/2)$ and $f \in H^{-s+\theta\frac\beta2}(\Omega)$ for some $\theta \in (0,1)$ and satisfying $s < \theta\frac\beta2$. Then, $\Nsnl^{\Onl} u \big|_{\Ol}  \in L^2(\Ol)$.
\end{lemma}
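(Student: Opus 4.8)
The plan is to write the nonlocal flux $\Nsnl^{\Onl} u$ restricted to $\Ol$ as the difference of two operators whose $L^2(\Ol)$-mapping properties can be established separately. Fix $x \in \Ol \subset \Onl^c$ and split the integral defining the full-space weighted fractional Laplacian over $\R^n = \Ol \cup \Onl$ (the interface $\Sigma$ being Lebesgue-null); since the contribution of $\Onl$ carries no singularity at $x$, this gives the pointwise identity
\[
(-\Delta_\snl)^s u(x) = (-\Delta_\snl)^s_{\Ol} u(x) + \Nsnl^{\Onl} u(x), \qquad x \in \Ol,
\]
where $(-\Delta_\snl)^s_{\Ol}$ is the weighted censored operator on $\Ol$ and the second term is exactly \eqref{eq:def-wNs} with $\domain = \Onl$. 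The identity is immediate for $u \in C^2_c(\R^n)$, and I would extend it to the minimizer at hand by density once both operators on the right-hand side are shown to map the relevant space boundedly into $L^2(\Ol)$. Hence it suffices to prove $(-\Delta_\snl)^s_{\Ol} u|_{\Ol} \in L^2(\Ol)$ and $(-\Delta_\snl)^s u|_{\Ol} \in L^2(\Ol)$.

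For the censored term, recall that the minimizer of $E_{II}$ has $u|_{\Ol} \in H^1(\Ol)$. I claim that for $s < 1/2$ the weighted censored operator is bounded from $H^1(\Ol)$ into $L^2(\Ol)$. For $v$ Lipschitz (extended to $\R^n$ via a Sobolev extension), the bound $|v(x)-v(y)| \le |x-y| \int_0^1 |\nabla v(x+t(y-x))|\,dt$, together with $|\snl| \le \sigma_{\max}$ and the change of variables $z = x + t(y-x)$, reduces $|(-\Delta_\snl)^s_{\Ol} v(x)|$ to a convolution $\bigl(K \ast |\nabla v|\bigr)(x)$ with $K(w) = |w|^{1-n-2s}\,\chi_{\{|w|\le \operatorname{diam}\Ol\}}(w)$; since $2s < 1$ one has $K \in L^1(\R^n)$, so Young's inequality yields $\|(-\Delta_\snl)^s_{\Ol} v\|_{L^2(\Ol)} \lesssim \|\nabla v\|_{L^2(\Ol)}$, and the estimate passes to all of $H^1(\Ol)$ by density. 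This is the point where $s<1/2$ is used for this term.

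The full-space term is the heart of the matter. By Step~2 of \S\ref{subsec_Euler-Lagrange_III}, $u|_{\Onl}$ is the unique weak solution of $(-\Delta_\snl)^s u = f$ in $\Onl$ with exterior datum $g := u|_{\Onl^c}$, and $u \in H^s(\R^n)$ because $\mathcal{H}_{II}$ embeds into $\widetilde{H}^s(\Omega)$ (its $\Onl$-interactions being controlled by the $V^s$-seminorm, the $\Ol$--$\Omega^c$ interaction by a fractional Hardy inequality, which is valid since $s<1/2$). Since $u \in \mathcal{H}_{II} = \overline{\mathcal{D}(\Omega)}$ has vanishing trace on $\Gl$ and vanishes on $\Omega^c$, the datum $g$ lies in $H^1(\Onl^c) \subset H^{\alpha+2s}(\Onl^c)$ for every $\alpha \in (0,1-2s)$; fix such an $\alpha$ with moreover $\alpha \le \beta$. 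Theorem~\ref{thm:reg-nl-s-small} applied with $f \in H^{-s+\theta\beta/2}(\Onl)$ then gives $u \in H^{s+m}(\R^n)$ with $m = \min\{\alpha+s-\epsilon,\, \theta\tfrac{\beta}{2}\}$; because $s < \theta\tfrac{\beta}{2}$ by hypothesis and $\alpha > \epsilon$ for $\epsilon$ small, the exponent $\alpha' := m - s$ satisfies $0 < \alpha' \le \beta$ and $\alpha' < 1-2s$ (the last since $m < 1-s$, using $\theta\tfrac{\beta}{2} < \tfrac12$ and $s < \tfrac12$). Lemma~\ref{lemma:mapping-Laplacian} with this $\alpha'$ gives $(-\Delta_\snl)^s u \in B^{\alpha'}_{2,\infty}(\R^n) \subset L^2(\R^n)$, in particular $(-\Delta_\snl)^s u|_{\Ol} \in L^2(\Ol)$. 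Combined with the censored estimate and the splitting identity, this yields $\Nsnl^{\Onl} u|_{\Ol} \in L^2(\Ol)$.

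The main obstacle I anticipate is the global regularity step just sketched: one must carefully verify $g = u|_{\Onl^c} \in H^1(\Onl^c)$ — blending the interior Lipschitz geometry of $\Onl^c$ near $\Sigma$ with the fact that the zero-extension across $\Gl$ of an $H^1(\Ol)$-function with zero trace on $\Gl$ stays $H^1$ — and keep the parameter constraints ($\alpha \in (0,1-2s)$, $\alpha \le \beta$, and $\alpha' = m-s \in (0,\min\{\beta,1-2s\})$) simultaneously satisfiable, which is exactly where the hypotheses $s < \theta\tfrac{\beta}{2}$ and $s < 1/2$ are consumed. A secondary, routine point is the density argument making the pointwise splitting identity legitimate at the regularity level of $u$; this is straightforward once the two $L^2(\Ol)$-mapping estimates are in hand, using density of $C^\infty_c(\R^n)$ in $H^{s+\theta\beta/2}(\R^n)$ and of $C^\infty(\overline{\Ol})$ in $H^1(\Ol)$.
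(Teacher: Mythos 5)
Your strategy (global regularity $u\in H^{s+\theta\frac\beta2}(\R^n)$ via Theorem~\ref{thm:reg-nl-s-small}, then mapping properties of the full-space and censored operators plus a splitting identity) starts the same way as the paper but then diverges, and the divergence contains a genuine gap: the splitting identity is wrong as stated. For $x\in\Ol$ you cannot write $\R^n=\Ol\cup\Onl$ up to a null set, since $\Omega$ is bounded and $\Omega^c$ has positive (indeed infinite) measure; the correct decomposition is
\[
(-\Delta_\snl)^s u(x) \;=\; (-\Delta_\snl)^s_{\Ol} u(x) \;+\; \Nsnl^{\Onl} u(x) \;+\; C(n,s)\,u(x)\int_{\Omega^c}\frac{\snl(x,y)}{|x-y|^{n+2s}}\,dy,
\]
where the last term uses $u\equiv 0$ on $\Omega^c$. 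This extra term is not negligible: the weight $\int_{\Omega^c}|x-y|^{-n-2s}\,dy$ behaves like $d(x,\partial\Omega)^{-2s}$, which blows up as $x\in\Ol$ approaches $\Gl$ or the junction $\overline\Sigma\cap\partial\Omega$, so membership of $u(\cdot)\,d(\cdot,\partial\Omega)^{-2s}$ in $L^2(\Ol)$ must be proved. It is plausibly true for $s<1/2$ (using that $u$ has zero trace on $\Gl$, a Hardy-type inequality with exponent $2s<1$, and an integrability/H\"older argument near the junction where $u$ need not vanish), but your argument neither notices the term nor supplies such an estimate, so as written the conclusion does not follow from your two mapping bounds. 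The other ingredients are essentially sound: your treatment of the censored part ($K\in L^1$ since $2s<1$, Young's inequality) and the application of Lemma~\ref{lemma:mapping-Laplacian} with $\alpha'=\theta\frac\beta2-s\in(0,\min\{\beta,1-2s\})$ check out, and the exterior-datum regularity step matches the paper's (which cautiously uses $H^{1-\epsilon}(\Onl^c)$ rather than $H^1(\Onl^c)$, a harmless difference).

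For comparison, the paper avoids the decomposition altogether: after obtaining $u\in H^{s+\theta\frac\beta2}(\R^n)$ exactly as you do, it estimates $\Nsnl^{\Onl}u$ on $\Ol$ directly by Cauchy--Schwarz with the weight $\omega(x)=\bigl(\int_{\Onl}|x-y|^{-(n+2s-\theta\beta)}\,dy\bigr)^{1/2}$, which lies in $L^\infty(\Ol)$ precisely because $\theta\frac\beta2>s$; squaring and integrating bounds $\|\Nsnl^{\Onl}u\|_{L^2(\Ol)}$ by $\|u\|_{H^{s+\theta\beta/2}(\R^n)}$. This handles the near-interface singularity $|x-y|\to 0$ (the real difficulty, since $\Ol$ and $\Onl$ are adjacent) without ever introducing the censored operator or the exterior contribution, and it is where the hypothesis $s<\theta\frac\beta2$ is consumed. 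If you want to salvage your route, you must add the missing $\Omega^c$ term and prove the corresponding weighted $L^2$ bound; otherwise the paper's direct weighted Cauchy--Schwarz argument is both shorter and free of this obstruction.
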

\begin{proof}
The well-posedness of problem \eqref{eq:coupled_problemIII_NEW} implies $u|_{\Onl^c} \in H^1(\Onl^c)$, and because $u$ vanishes on $\Omega^c$, we have $u|_{\Onl^c} \in H^{1-\epsilon}(\Onl^c)$ for all $\epsilon > 0$. We can therefore apply Theorem \ref{thm:reg-nl-s-small} with $\alpha = 1 - 2s - \epsilon$; because $s < 1/2$ we can assume $\epsilon$ is small enough so that $\min\{ 1- s - \epsilon, \theta\frac\beta2 \} = \theta\frac\beta2$ and therefore we obtain the global regularity $u \in H^{s+\theta\frac\beta2}(\R^n)$.

Let us define $\omega \colon \Ol \to \R$,
\[
\omega (x) = \left(\int_{\Onl} \frac{1}{|x-y|^{n+2s- \theta\beta}} \, dy \right)^{1/2}.
\]
Then, because $ \theta\frac\beta2 > s$ and $\Ol$, $\Onl$ are bounded, an integration in polar coordinates allows us to deduce that $\omega \in L^\infty(\Ol)$ (with $ \| \omega \|_{L^\infty(\Ol)} \le C( n, s, \alpha, d(\Ol), d(\Onl) )$).

We can therefore compute, for all $x \in \Ol$ and by applying H\"older's inequality,
\[ \begin{split}
|\Nsnl^{\Onl} u (x)| & \le C(n,s) \, \| \snl \|_{L^\infty(\Ol \times \Onl)} \int_{\Onl} \frac{|u(x)-u(y)|}{|x-y|^{n+2s}} \, dy \\
& \le C(n,s) \, \| \snl \|_{L^\infty(\Ol \times \Onl)} \, \| \omega \|_{L^\infty(\Ol)} \left(\int_{\Onl} \frac{|u(x)-u(y)|^2}{|x-y|^{n+2s+\theta\beta}} \, dy \right)^{1/2}.
\end{split} \]
Squaring both sides above and integrating on $\Ol$ yields
\[
\| \Nsnl^{\Onl} u \|_{L^2(\Ol)}  
\lesssim \left(\int_{\Ol} \int_{\Onl} \frac{|u(x)-u(y)|^2}{|x-y|^{n+2s+\theta\beta}} \, dy \, dx \right)^{1/2}.
\]
The last integral above can be bounded by $\| u \|_{H^{s+\theta\frac\beta2}(\R^n)}$ and we conclude that  $\Nsnl^{\Onl} u \in L^2(\Ol)$. 
\end{proof}

Under the assumptions of the previous lemma, by (\ref{eq:normal-derivative}) we deduce that, on $\Ol$, $u$ solves the problem 
\[
 \left\lbrace \begin{aligned}
-\div (\sl \nabla u) = f - \Nsnl^{\Onl} u   & \mbox{ in } \Ol, \\
(\sl \frac{\partial u}{\partial \vn})|_{\Ol} = 0 & \mbox{ on } \Sigma, \\
u = 0 & \mbox{ on } \Gl = \partial\Ol \setminus \Sigma.
\end{aligned} \right.
\]
We have $f - \Nsnl^{\Onl} u \in L^2(\Ol)$. Next, we simplify our discussion by restricting $\Ol$ to be a two-dimensional, polygonal domain so that Proposition \ref{prop:local-regularity-mixed} can be applied.

\begin{proposition}[case $s<1/2$] \label{prop:regularity-small-s}
Let $\Omega \subset \R^2$ be a bounded polygonal domain such that $\Ol$ satisfies the assumptions in Proposition \ref{prop:local-regularity-mixed}. Let $f \in H^{-s+\theta\frac\beta2}(\Omega)$ for some $\theta \in (0,1)$ and satisfying $s < \theta\frac\beta2$, and $u$ be a minimizer of the energy $E_{II}$ with $s<1/2$. Then, $u|_{\Ol} \in H^{\gamma}(\Ol)$ for $\gamma = \min\{\gamma(\Ol),2\}$ and $u \in H^{s+\theta\frac\beta2}(\R^2)$.
\end{proposition}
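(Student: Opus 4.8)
The plan is to assemble the proposition from ingredients that are already essentially in place: the lemma immediately preceding it, and the mixed boundary value problem for $u|_{\Ol}$ derived just above its statement.

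First, the global regularity $u \in H^{s+\theta\frac\beta2}(\R^2)$ comes for free: it is precisely what is established inside the proof of the preceding lemma, where Theorem~\ref{thm:reg-nl-s-small} is applied with $\alpha = 1-2s-\epsilon$ (using the well-posedness bound $u|_{\Onl^c} \in H^{1-\epsilon}(\Onl^c)$ valid for all $\epsilon>0$) and, since $s<1/2$, one may take $\epsilon$ small enough that $\min\{1-s-\epsilon,\theta\frac\beta2\} = \theta\frac\beta2$. So the only thing left to prove is $u|_{\Ol} \in H^\gamma(\Ol)$ with $\gamma = \min\{\gamma(\Ol),2\}$.

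For that, I would recall that the same lemma gives $\Nsnl^{\Onl} u |_{\Ol} \in L^2(\Ol)$, so that by \eqref{eq:normal-derivative} the interface operator reduces to a classical conormal derivative, $D_\Sigma u = (\sl \frac{\partial u}{\partial \vn})_{|\Ol}$, and the transmission condition $D_\Sigma u = 0$ in \eqref{eq:coupled_problemIII_NEW} becomes the homogeneous Neumann condition $(\sl \frac{\partial u}{\partial \vn})_{|\Ol} = 0$ on $\Sigma$. Combined with the first line of \eqref{eq:coupled_problemIII_NEW} and the Dirichlet condition $u = 0$ on $\Gl$ inherited from $u \in \calH_{II}$, this is exactly the mixed problem for $u|_{\Ol}$ written just before the statement, whose right-hand side $f - \Nsnl^{\Onl} u$ lies in $L^2(\Ol)$: indeed $\Nsnl^{\Onl} u|_{\Ol} \in L^2(\Ol)$, and $f|_{\Ol} \in H^{-s+\theta\frac\beta2}(\Ol) \hookrightarrow L^2(\Ol)$ since $-s+\theta\frac\beta2 > 0$. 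I would then invoke Proposition~\ref{prop:local-regularity-mixed} with $\Gamma_D = \Gl$, $\Gamma_N = \Sigma$ and $L^2$ datum, which yields $u|_{\Ol} \in H^{\min\{\gamma(\Ol),2\}}(\Ol)$, the truncation at $2$ being the best global smoothness compatible with a merely Lipschitz diffusivity and an $L^2$ source, regardless of the opening angles at the points of $\mathcal{C}$.

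The one genuine gap, and the step I expect to require the most care, is that Proposition~\ref{prop:local-regularity-mixed} is stated for the plain Laplacian, whereas here the operator is $-\div(\sl\nabla\cdot)$ with $\sl$ Lipschitz. As the remark preceding that proposition already signals, one should replace it by the corresponding mixed Dirichlet/Neumann regularity estimate for divergence-form operators with Lipschitz coefficients; this is standard via localization and coefficient freezing (interior and flat-boundary $H^2$ regularity away from $\mathcal{C}$, together with the fact that near each corner the leading singular exponent of $-\div(\sl\nabla\cdot)$ agrees, up to lower-order perturbations, with that of the constant-coefficient model problem, so the exponents $\gamma_c$ are unchanged). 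Everything else is bookkeeping: once that estimate is available, combining it with the output of the preceding lemma finishes the proof.
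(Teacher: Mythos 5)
Your proposal is correct and takes essentially the same route as the paper: use the preceding lemma to get $\Nsnl^{\Onl}u|_{\Ol}\in L^2(\Ol)$, invoke \eqref{eq:normal-derivative} to turn $D_\Sigma u=0$ into a homogeneous Neumann condition, apply Proposition \ref{prop:local-regularity-mixed} to the resulting mixed problem on $\Ol$ with $L^2$ right-hand side, and use Theorem \ref{thm:reg-nl-s-small} for the global statement. The only (harmless) differences are of bookkeeping: you quote $u\in H^{s+\theta\frac\beta2}(\R^2)$ directly from the lemma's proof, whereas the paper re-derives it from $u|_{\Onl^c}\in H^{1-\epsilon}(\Onl^c)$ after first establishing $u|_{\Ol}\in H^{\gamma}(\Ol)$, and you explicitly flag the constant-versus-Lipschitz diffusivity mismatch in Proposition \ref{prop:local-regularity-mixed}, which the paper addresses only through its remark that any suitable mixed Dirichlet/Neumann regularity estimate may be substituted.
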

\begin{proof}
Let $\gamma = \min\{\gamma(\Ol),2\} \in (1,2]$ be as in Proposition \ref{prop:local-regularity-mixed}. We have $f|_{\Ol}  \in L^2(\Ol)$ and therefore $u|_{\Ol} \in H^{\gamma}(\Ol)$. This readily implies $u|_{\Onl^c} \in H^{\gamma}(\Onl^c)$ and $u|_{\Onl^c} \in H^{1-\epsilon}(\Onl^c)$ for all $\epsilon > 0$. We can apply Theorem \ref{thm:reg-nl-s-small} to conclude that $u \in H^{s+\theta\frac\beta2}(\R^2)$.
\end{proof}

\begin{remark}[comments on the general case]
In this subsection, we have performed two different simplifications. On the one hand, we have restricted the analysis to problems on two spatial dimensions, and on polygonal domains. As we already stated, we made this assumption with the only goal of simplifying our analysis, and the proof we made could be easily adapted for other configurations in which different type of regularity estimates on $\Ol$ are available.

On the other hand, our analysis only covered the case $s<1/2$. The reason for this assumption is twofold. First, in our derivation of the strong form of the Euler-Lagrange equations we were able to show the transmission condition $D_\Sigma = 0$, and we actually expect the operator $D_\Sigma$ to coincide with a Neumann derivative only when $s<1/2$; it should be noted that, in the limit $s \to 1$ our problem recovers a standard transmission problem and therefore the interface condition we expect is like in \eqref{eq:local-transmission}. Second, the mapping properties of the operator $\Nsnl^{\Onl}$ are not clear when $s \ge 1/2$: while in principle we expect it to be an operator of order $2s \ge 1$, the lack of second differences in its definition difficults the analysis. In particular, we are not aware of a result like Theorem \ref{thm:trace-extension} for $t > 1$ and therefore we cannot give a definition like \eqref{eq:def-Nsnl} for functions in $\mathcal{H}^t(\Onl)$ with $t > 1$.

Nevertheless, a direct treatment of \eqref{eq:coupled_problemIII_NEW} as a transmission problem might allow to obtain regularity estimates in the whole range $s \in (0,1)$.
\end{remark}

\section{Numerical analysis and computational explorations} \label{sec:experiments}
In this section, we consider a finite element discretization of both proposed energies. We discuss their approximation capabilities and outline the main ingredients required in their analysis. We also include several numerical experiments, using the code developed in \cite{AcBeBo17} as a starting point, that illustrate the qualitative behavior of the proposed models as well as some of their most relevant features.

\subsection{Finite element setting, interpolation, and convergence}

We consider discretizations of the problems \eqref{eq:coupled_problemI_NEW} and \eqref{eq:coupled_problemIII_NEW} by means of the finite element method with piecewise linear continuous functions. 
Let $h_0 > 0$; for $h \in (0, h_0]$, we let $\mathcal{T}_h$ denote a mesh of $\Omega$, i.e., $\mathcal{T}_h = \{T\}$ is a partition of $\Omega$ into simplices $T$ of diameter $h_T$ and $h = \max_{T \in \Th} h_T$. Additionally, we assume that $\mathcal{T}_h$ meshes exactly both $\Ol$ and $\Onl$, and that the family $\{\Th \}_{h>0}$ is shape-regular, namely,
\begin{equation} \label{eq:shape-regularity}
\sigma := \sup_{h>0} \max_{T \in \Th} \frac{h_T}{\rho_T} <\infty,
\end{equation}
where $\rho_T $ is the diameter of the largest ball inscribed in $T$. We take simplices to be closed sets. We point out that our only assumption on meshes is shape-regularity; in particular, we do not assume any uniformity. In some cases, we choose meshes that are suitably graded towards the interface $\Sigma$ to better capture the solution behavior near that region.

Let $\overline{\mathcal{N}_h}$ be the set of vertices of $\Th$, $\mathcal{N}_h$ be the set of interior vertices, $N = \#\mathcal{N}_h$  and $\{ \varphi_i \}_{i=1}^N$ be the standard piecewise linear Lagrangian basis, with $\phii_i$ associated to the node $\x_i \in \mathcal{N}_h$. With this notation, the set of discrete functions is
\begin{equation*} \label{eq:FE_space}
\widetilde{\mathbb{V}}_h :=  \left\{v_h: \R^n \to \R \colon v_h \in C(\R^n), \ v_h = \sum_{i=1}^N v_i \varphi_i \right\},
 \end{equation*}
where $v_h$ is trivially extended by zero outside $\Omega$. It is clear that $\widetilde{\mathbb{V}}_h \subset \widetilde{H}^s(\Omega)$ for all $s \in (0,1)$. Therefore, we consider conforming finite element discretizations and seek $u_h \in \widetilde{\mathbb{V}}_h$ such that either
\begin{equation} \label{eq:Euler-Lagrange-E_I-discrete} \begin{aligned}
 \int_{\Ol} \sl \nabla u_h \cdot \nabla v_h\, dx + \frac{ C(n,s) }{2} \iint_{\Onl^2} \snl(x,y) \frac{(u_h(x)-u_h(y))(v_h(x)-v_h(y))}{|x-y|^{n+2s}} dy dx  \\
= \int_\Omega f v_h \, dx, \qquad \forall v_h \in \widetilde{\mathbb{V}}_h
\end{aligned}
\end{equation}
or 
\begin{equation} \label{eq:weak-EIIv3-discrete} \begin{aligned}
\int_{\Ol}  \sl \nabla u_h \cdot \nabla v_h\, dx + \frac{C(n,s)}{2}  \iint_{Q_{\Onl}} \snl(x,y) \frac{(u_h(x)-u_h(y))(v_h(x)-v_h(y))}{|x-y|^{n+2s}} dy dx\\ 
= \int_\Omega f v_h \, dx, \quad \forall v_h \in \widetilde{\mathbb{V}}_h
\end{aligned} \end{equation}
hold. These equations are simply discrete counterparts to \eqref{eq:Euler-Lagrange-E_I} and \eqref{eq:weak-EIIv3}, respectively.
Clearly, $u_h$ solves \eqref{eq:Euler-Lagrange-E_I-discrete} (resp. \eqref{eq:weak-EIIv3-discrete}) if and only if it is the minimizer of the restriction of the convex functional $E_I$ from \eqref{eq:energy-I} (resp. $E_{II}$ from \eqref{eq:energy-III}) over the linear space $\widetilde{\mathbb{V}}_h$; existence of discrete solutions follows immediately. 
Moreover, if we let $v_h = u_h$ in either \eqref{eq:Euler-Lagrange-E_I-discrete} or \eqref{eq:weak-EIIv3-discrete}, then we immediately obtain discrete stability bounds given $f \in L^2(\Omega)$ (or even in suitable negative-order Sobolev spaces).

Furthermore, as $\widetilde{\mathbb{V}}_h \subset  \calH_{I}$ (resp. $\widetilde{\mathbb{V}}_h \subset  \calH_{II}$) we trivially have a Galerkin orthogonality property and consequently a best approximation result holds. From this, one can combine suitable quasi-interpolation estimates with a density argument like \cite[Thm. 3.1.3]{RaviartThomas83} to prove the convergence of the approximations. In case one has at hand regularity estimates for the solutions, such as the ones described in Section \ref{sec:regularity}, convergence rates can be obtained. 
We do not delve into details, and only outline the main ingredients for the energy $E_{II}$. Using the fact that $\sl, \snl$ are bounded and uniformly positive, the best approximation result yields
\[
| u - u_h|_{H^1(\Ol)} + | u - u_h|_{V^s(\Onl | \R^n)} \lesssim \inf_{v_h \in \widetilde{\mathbb{V}_h}} | u - v_h|_{H^1(\Ol)} + | u - v_h|_{V^s(\Onl | \R^n)}.
\]
Thus, it suffices to use a suitable interpolation operator to bound the sum in the right-hand side above.
For clarity, we specifically consider the Scott-Zhang quasi-interpolation operator \cite{ScottZhang};  analogous conclusions could be drawn from interpolation using e.g. the Cl\'ement \cite{Clement} or Chen-Nochetto \cite{ChenNochetto} operators, although some modifications would be needed to deal with the interface.
Since in all our problems the solutions exhibit the minimal regularity $u \in H^\ell(\Omega)$ for some $\ell > 1/2$ for adequately smooth $\snl$ and $f$ (see Theorem~\ref{thm:reg-nl-s-small} with $\alpha$ close to $1-2s$), we can follow the original construction of the interpolation operator from \cite{ScottZhang}. 

For the following discussion, we consider an arbitrary bounded, polyhedral Lipschitz domain $\domain$ that is meshed exactly by a shape-regular family $\mathcal{T}_h = \{T\}$, and keep the notation introduced above. As we are considering piecewise linear Lagrange elements, for every node $\x_i \in \overline{\mathcal{N}_h}$ we consider an $(n-1)$-simplex $K_i$ such that $\x_i \in \partial K_i$, we design $I_h : \widetilde H^\ell(\domain) \to \widetilde{\mathbb{V}}_h$ by introducing the $L^2$-dual basis of the nodal basis over $K_i$ and, in particular, we let $\psi_i$ be the basis element such that $\int_{K_i} \varphi_i \psi_i = 1$.  With this, we define
\begin{equation} \label{eq:def-quasi-interpolation}
  I_h v = \sum_{i \colon \x_i \in \overline{\mathcal{N}_h}} \left( \int_{K_i} v(x) \psi_i(x)  dx \right) \varphi_i.
\end{equation}
 
Even if the sum is taken over all the mesh nodes, for functions that vanish on $\partial \domain$ we have $I_h v \big|_{\partial \Omega} = 0$. For the analysis of the operator, we need the following notions: given $T \in \Th$, we respectively define its first and second star by
\[
  S^1_T := \bigcup \left\{ T' \in \Th: \, T \cap T' \neq \emptyset \right\}, \quad
  S^2_T := \bigcup \left\{ T' \in \Th: \, S^1_T \cap T' \neq \emptyset \right\}.
\]
We also require a slight modification to deal with boundary-touching simplices: we define
\begin{equation*}
\widetilde{S}_T^1 :=
\left\lbrace \begin{array}{rl}
  S_T^1   & \textrm{if } T \cap \partial\Omega = \emptyset,
  \\
  B_T    & \textrm{otherwise,}
\end{array}\right.
\end{equation*}
where $B_T$ is a ball centered at the barycenter of $T$, with radius comparable to $h_T$, and such that $S_T^1\subset B_T$. We also consider
\[
\widetilde{S}_T^2 := \bigcup \big\{ \widetilde{S}_{T'}^1 \colon \, T' \in \Th, \, \mbox{int}(T') \cap S_T^1 \neq \emptyset \big\}.
\]

In \cite[Section 4]{ScottZhang}, approximation properties of this operator in integer-order Sobolev spaces are studied. Combining the results therein with interpolation between Sobolev spaces, we deduce the following
\[
\| v - I_h v \|_{L^2(T)} + h_T \| \nabla (v - I_h v) \|_{L^2(T)} \lesssim h_T^r |v|_{H^r(S^1_T)} \quad \forall v \in H^r(S^1_T), \ r \in [1, 2],
\]
with $\lesssim$ independent of $h$ and $T\in\Th$.
For less-regular functions, the approximation properties of the Scott-Zhang operator were first studied in \cite{Ciarlet}. Here, we recall a related estimate derived in \cite{AcBo17,BoNo23-2}, that is tailored to the analysis of our problem: for all  $v \in H^r(\widetilde{S}_T^2)$, $s \in (0,1)$, $r \in [s,2]$, it holds that
\[
\left(\iint_{T \times \widetilde{S}_T^1}  \frac{|(v-I_h v) (x) - (v-I_h v) (y)|^2}{|x-y|^{n+2s}} \, dy \, dx \right)^{1/2} \lesssim h_T^{r-s} |v|_{H^r(\widetilde{S}_T^2)}.
\]

To put together the local approximability estimates with respect to fractional-order seminorms, one needs to use a suitable localization. The first results along this line were obtained in \cite{Faermann2, Faermann}, where localization of the $H^s(\domain)$-seminorms are studied; a slight modification needs to be introduced to deal with $\widetilde H^s(\domain)$-seminorms, see \cite[Section 4.1]{BoNo23-2} for details. Concretely, we have
\begin{equation} \label{eq:localization-F-tilde}
 \|v\|_{\widetilde H^s(\domain)}^2 \le \frac{C(n,s)}2 \sum_{T \in \Th} \left[ \iint_{T \times \widetilde{S}_T^1}\!  \frac{|v (x) - v (y)|^2}{|x-y |^{n+2s}} \; dy \; dx + \frac{ c(n,\sigma) \| v \|^2_{L^2(T)}}{s h_T^{2s}} \right],
\end{equation}
where $\sigma$ is the shape-regularity constant introduced in \eqref{eq:shape-regularity}. Collecting the results mentioned above, we obtain the following global interpolation estimates.

\begin{proposition} Let $\domain$ be a bounded, polyhedral Lipschitz domain, $s \in (0,1)$, and $I_h$ be the Scott-Zhang interpolation operator defined above on a shape regular family of meshes of $\domain$. Then, we have
\begin{equation} \label{eq:SZ-H1}
 |v - I_h v |_{H^1(\domain)} \lesssim \left(\sum_{T \in \Th} h_T^{2(r-1)} |v|_{H^r(S^1_T)}^2 \right)^{1/2} \quad \forall v \in H^r(\domain), \ r \in [1, 2],
\end{equation}
and
\begin{equation} \label{eq:SZ-Hs}
 |v - I_h v |_{\widetilde H^s(\domain)} \lesssim \left(\sum_{T \in \Th} h_T^{2(r-s)} |v|_{H^r(\widetilde S^2_T)}^2 \right)^{1/2} \quad \forall v \in \widetilde H^r(\domain), \ r \in [s, 2].
\end{equation}\end{proposition}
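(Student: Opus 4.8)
The plan is to obtain both estimates by assembling the element-local bounds recalled just above with the localization inequality \eqref{eq:localization-F-tilde}. Since the right-hand sides of \eqref{eq:SZ-H1}--\eqref{eq:SZ-Hs} already carry the sum over $T \in \Th$, no finite-overlap estimate for the patches $S^1_T$, $\widetilde{S}^2_T$ is needed; the constants will depend only on $n$, $s$, and the shape-regularity constant $\sigma$ from \eqref{eq:shape-regularity}.

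Estimate \eqref{eq:SZ-H1} is immediate: since $\Th$ partitions $\domain$ into simplices, I would write $|v - I_h v|_{H^1(\domain)}^2 = \sum_{T \in \Th} \| \nabla (v - I_h v)\|_{L^2(T)}^2$ and insert the local bound $\| \nabla(v - I_h v)\|_{L^2(T)} \lesssim h_T^{r-1} |v|_{H^r(S^1_T)}$, $r \in [1,2]$, which follows from \cite[Section 4]{ScottZhang} together with interpolation between Sobolev spaces. For \eqref{eq:SZ-Hs}, put $w := v - I_h v$, which lies in $\widetilde H^s(\domain)$ because $v \in \widetilde H^r(\domain) \subset \widetilde H^s(\domain)$ for $r \ge s$ and $I_h v \in \widetilde{\mathbb{V}}_h \subset \widetilde H^s(\domain)$, and apply \eqref{eq:localization-F-tilde} to $w$:
\[
|v - I_h v|_{\widetilde H^s(\domain)}^2 \le \| v - I_h v\|_{\widetilde H^s(\domain)}^2 \lesssim \sum_{T \in \Th} \left[ \iint_{T \times \widetilde{S}^1_T} \frac{|w(x)-w(y)|^2}{|x-y|^{n+2s}}\, dy\, dx + \frac{\| w\|_{L^2(T)}^2}{h_T^{2s}}\right].
\]
The first bracketed term is bounded by $h_T^{2(r-s)} |v|_{H^r(\widetilde{S}^2_T)}^2$ via the local Gagliardo-seminorm estimate for $I_h$ stated above; for the second I would invoke the local $L^2$-estimate $\| v - I_h v\|_{L^2(T)} \lesssim h_T^r |v|_{H^r(S^1_T)}$, so that $h_T^{-2s}\| w\|_{L^2(T)}^2 \lesssim h_T^{2(r-s)} |v|_{H^r(S^1_T)}^2 \le h_T^{2(r-s)} |v|_{H^r(\widetilde{S}^2_T)}^2$, using the nesting $S^1_T \subset \widetilde{S}^1_T \subset \widetilde{S}^2_T$. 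Summing over $T \in \Th$ yields \eqref{eq:SZ-Hs}.

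The point requiring the most care is the low-order term $h_T^{-2s}\|w\|_{L^2(T)}^2$: one must ensure that the $L^2$-approximation property of $I_h$ holds down to $r = s$ --- which for $s<1$ sits below $H^1$, so the classical results of \cite{ScottZhang} have to be supplemented by the fractional-order analyses of \cite{Ciarlet,AcBo17,BoNo23-2} --- and that the exponent $h_T^{2r-2s}$ it contributes matches the one coming from the seminorm term, so that the two contributions scale identically and can be added on the single patch $\widetilde{S}^2_T$. The remaining verifications, namely that $I_h v$ vanishes on $\partial\domain$ when $v$ does (so that $I_h v \in \widetilde{\mathbb{V}}_h$) and that the factors $C(n,s)$ and $1/s$ in \eqref{eq:localization-F-tilde} are harmless for a fixed $s \in (0,1)$, are routine bookkeeping.
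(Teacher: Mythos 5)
Your proposal is correct and follows essentially the same route as the paper: the $H^1$ bound comes from summing the element-wise Scott--Zhang estimates over the partition, and the $\widetilde H^s$ bound comes from applying the localization \eqref{eq:localization-F-tilde} to $v-I_hv$ and controlling the patch-wise Gagliardo term by the fractional local estimate of \cite{AcBo17,BoNo23-2} and the scaled $L^2$ term by the local $L^2$ approximation bound (with the fractional-order extensions of \cite{Ciarlet,AcBo17,BoNo23-2} when $r<1$). Your observations that no finite-overlap argument is needed and that the two contributions scale as $h_T^{2(r-s)}$ on the common patch $\widetilde S^2_T$ match the intended argument.
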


Under the assumption that $\Th$ meshes exactly both $\Ol$ and $\Onl$, the result above immediately allows one to deduce convergence rates for our energy $E_{II}$ provided we have at hand some estimates on the regularity of solutions. Indeed, it is clear that \eqref{eq:SZ-H1} yields convergence rates in $H^1(\Ol)$ for the quasi-interpolation operator. Additionally, even in problems in which $\Sigma \neq \emptyset$, the seminorm \eqref{eq:def-Vs-norm} can be trivially bounded in terms of the global $\widetilde H^s(\Omega)$ norm, since $Q_{\Onl} \subset Q_\Omega$. Thus, we can bound $|u - I_h u|_{V^s(\Onl | \R^n)} \le |u - I_h u|_{\widetilde H^s(\Omega)}$ and we can use \eqref{eq:SZ-Hs}. In the settings of either Sections \ref{sec:isolated-subdomains} or \ref{sec:regularity-small-s}, the resulting convergence rates read as follows.

\begin{proposition}[convergence rates] \label{prop:conv-rates} Let  $\{\Th \}_{h>0}$ be a shape-regular, conforming family of meshes in $\Omega$, such that for every $h>0$ the meshes $\Th$ cover exactly both $\Ol$ and $\Onl$, and let  $h = \max_{T \in \Th} h_T$.

In the setting of Proposition \ref{prop:reg-isolated}, we have
\[
| u - u_h|_{H^1(\Ol)} + | u - u_h|_{V^s(\Onl | \R^n)} \lesssim h^{\frac\beta2} \| f \|_{B^{-s+\beta/2}_{2,1}(\Omega)}
\]
Additionally, under the same hypotheses as in Proposition \ref{prop:regularity-small-s}, we have
\[
| u - u_h|_{H^1(\Ol)} + | u - u_h|_{V^s(\Onl | \R^n)} \lesssim h^{\min\{\gamma, \, \theta\frac\beta2\}} \| f \|_{ H^{-s+\theta\frac\beta2}(\Omega)}.
\]
\end{proposition}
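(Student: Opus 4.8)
The proof of Proposition~\ref{prop:conv-rates} rests on two ingredients that are already in place: the best-approximation (quasi-optimality) estimate coming from C\'ea's lemma together with Galerkin orthogonality, and the interpolation estimates \eqref{eq:SZ-H1}--\eqref{eq:SZ-Hs} for the Scott--Zhang operator $I_h$. The plan is therefore to start from
\[
| u - u_h|_{H^1(\Ol)} + | u - u_h|_{V^s(\Onl | \R^n)} \lesssim | u - I_h u|_{H^1(\Ol)} + | u - I_h u|_{V^s(\Onl | \R^n)},
\]
and then estimate each of the two terms on the right separately using the regularity of $u$ provided by Propositions~\ref{prop:reg-isolated} and~\ref{prop:regularity-small-s}, respectively. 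Since $I_h$ is defined globally on $\Omega$ and the mesh $\Th$ resolves both subdomains exactly, the restrictions $I_h u|_{\Ol}$ and $I_h u|_{\Onl}$ are genuine finite element functions on $\Ol$ and $\Onl$, so the local estimates apply verbatim.

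For the first assertion (setting of Proposition~\ref{prop:reg-isolated}), one has $u|_{\Ol} \in \dot B^{3/2}_{2,\infty}(\Ol) \subset H^{3/2-\epsilon}(\Ol)$ for every $\epsilon>0$ and $u|_{\Onl} \in \dot B^{s+\beta/2}_{2,\infty}(\Onl) \subset H^{s+\beta/2-\epsilon}(\Onl)$. I would apply \eqref{eq:SZ-H1} with $r = 3/2$ on $\Ol$, which gives a rate $h^{1/2}$, and \eqref{eq:SZ-Hs} with $r = s+\beta/2$ on $\Onl$ (bounding $|u - I_h u|_{V^s(\Onl|\R^n)} \le |u - I_h u|_{\widetilde H^s(\Omega)}$ exactly as done in the paragraph preceding the proposition, and using that $u$ extended by zero lies in $\widetilde H^{s+\beta/2-\epsilon}(\Omega)$ by the homogeneous Dirichlet condition on $\partial\Omega$ together with the disjointness $\overline\Ol\cap\overline\Onl=\emptyset$), which gives a rate $h^{\beta/2}$. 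Since $\beta < 1$, the slower of the two is $h^{\beta/2}$, and summing the local contributions $\sum_T h_T^{2(r-s)}|u|^2_{H^r(\widetilde S_T^2)} \le h^{2(r-s)} |u|^2_{H^r(\Omega)}$ (finite overlap of the stars, guaranteed by shape-regularity) yields the claimed bound, with $\|f\|_{B^{-s+\beta/2}_{2,1}(\Omega)}$ appearing through the regularity estimate \eqref{eq:bes-regularity-max} and \eqref{eq:local-bes-regularity-max}.

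For the second assertion (setting of Proposition~\ref{prop:regularity-small-s}), $u|_{\Ol} \in H^{\gamma}(\Ol)$ with $\gamma = \min\{\gamma(\Ol),2\} \in (1,2]$ and $u \in H^{s+\theta\beta/2}(\R^2)$, hence $u|_{\Onl}$ (and its zero extension) enjoys global regularity $H^{s+\theta\beta/2}$. Applying \eqref{eq:SZ-H1} with $r = \gamma$ on $\Ol$ gives rate $h^{\gamma-1}$; wait, one must be careful here: \eqref{eq:SZ-H1} produces $h^{r-1}$ and the statement claims $h^{\min\{\gamma,\theta\beta/2\}}$, so in fact one should read the exponent on $\Ol$ as $\gamma-1$ if $\gamma$ denotes the Sobolev order, but the proposition's $\gamma$ in the final bound is meant as the \emph{rate}; I would simply track exponents carefully and take the minimum of the $H^1(\Ol)$-rate (which is $\gamma-1$ when $u\in H^\gamma(\Ol)$, or equivalently the rate written $\gamma$ in the statement matches a relabelling) and the $V^s(\Onl|\R^n)$-rate $\theta\beta/2$ coming from \eqref{eq:SZ-Hs} with $r = s+\theta\beta/2$. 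The dependence on $\|f\|_{H^{-s+\theta\beta/2}(\Omega)}$ is then inherited from the regularity estimates in Proposition~\ref{prop:regularity-small-s} and Theorem~\ref{thm:reg-nl-s-small}.

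The main obstacle is bookkeeping rather than conceptual: one must verify that the zero-extension of $u|_{\Onl}$ actually lies in $\widetilde H^r(\Omega)$ with the claimed order (so that \eqref{eq:SZ-Hs} is applicable on all of $\Omega$ and not merely on $\Onl$) — this uses the homogeneous Dirichlet condition $u=0$ on $\partial\Omega$, and in the isolated-subdomains case also that there is a positive gap between $\Ol$ and $\Onl$, while in the $s<1/2$ transmission case it uses the global regularity $u\in H^{s+\theta\beta/2}(\R^2)$ from Proposition~\ref{prop:regularity-small-s} directly — and that summing the squared local seminorm contributions over $T\in\Th$ reproduces a global seminorm up to a constant depending only on shape-regularity (finite overlap of the patches $\widetilde S_T^2$). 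Neither point is deep, but both must be stated cleanly, and the precise reading of the exponent on $\Ol$ in the second estimate (rate $\gamma-1$ from an $H^\gamma$ solution, versus the way $\gamma$ is displayed in the proposition) should be reconciled explicitly so the statement is not ambiguous.
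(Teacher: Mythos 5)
Your proposal follows essentially the same route the paper intends for Proposition~\ref{prop:conv-rates} (which is stated as a direct consequence of the preceding discussion rather than given a separate proof): Galerkin orthogonality and best approximation, the bound $|u-I_h u|_{V^s(\Onl|\R^n)}\le |u-I_h u|_{\widetilde H^s(\Omega)}$, the Scott--Zhang estimates \eqref{eq:SZ-H1}--\eqref{eq:SZ-Hs}, and the regularity from Propositions~\ref{prop:reg-isolated} and~\ref{prop:regularity-small-s}. Your bookkeeping remark on the second estimate is apt: since $u|_{\Ol}\in H^{\gamma}(\Ol)$ yields the $H^1(\Ol)$-rate $h^{\gamma-1}$, the combined exponent produced by this argument is $\min\{\gamma-1,\theta\frac{\beta}{2}\}$ (which agrees with the displayed $\min\{\gamma,\theta\frac{\beta}{2}\}=\theta\frac{\beta}{2}$ only when $\gamma-1\ge\theta\frac{\beta}{2}$), so the exponent in the statement should be read, or corrected, accordingly.
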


\subsection{Experiments}
In this section we present the results to several computational experiments involving the coupled models we proposed in Section \ref{sec:l-nl-coupling}.
We mainly focus on \eqref{eq:coupled_problemIII_NEW}, but we also show some experiments related to \eqref{eq:coupled_problemI_NEW}. In general, explicit solutions to the problems are not available, and therefore our purpose in this section is, rather than numerically corroborating the validity of Proposition \ref{prop:conv-rates}, to explore some salient features of this problem. 
Concretely, we investigate the behavior of solutions near the interface of the domain and provide evidence that the interface operator $ D_\Sigma $ from \eqref{eq:coupled_problemIII_NEW} coincides with the normal derivative for $s < 1/2$. We illustrate that, even when $ \Omega $ consists of two disconnected subdomains, there is effective coupling between the models. We also present numerical evidence of the development of singularities on $\Ol $, caused by the way $\Sigma$ intersects $\partial \Omega$, even when $\Ol$ is a convex domain. Furthermore, we study the eigenvalues of the discrete problems for a fixed mesh size and show numerically that \eqref{eq:coupled_problemIII_NEW} recovers a standard local transmission problem in the limit as $ s \to 1 $. Finally, we compare the behavior of the two models \eqref{eq:coupled_problemI_NEW} and \eqref{eq:coupled_problemIII_NEW} in this limit.

\subsubsection{Interface behavior} \label{sec:two-balls-experiment}
As discussed in Section \ref{sec:regularity-small-s}, a relevant feature of solutions to \eqref{eq:coupled_problemIII_NEW} is that, for $s<1/2$, there is a homogeneous Neumann interface condition from the local subdomain. In contrast, for $s \to 1$ we formally expect to recover a standard transmission condition. We illustrate this behavior with the following experiment.
Let $\Omega = B(0,2)$, with $\Onl = B(0,1)$, $\Ol = B(0,2) \setminus \overline{B(0,1)}$, $\sl \equiv 1$, $\snl \equiv 1$, $f \equiv 1$. Formally, for $s=1$ we recover the Dirichlet problem $u \in H^1_0(\Omega)$: $-\Delta u = 1$, whose solution is $u(x_1,x_2) = \frac{(4-(x_1^2+x_2^2))_+}4$ and satisfies ${\frac{\partial u}{\partial \vn}}|_{\Ol} \equiv -\frac12$ at the interface $\Sigma = \partial B(0,1)$.

We compute the normal derivative on the interface of the solutions to \eqref{eq:weak-EIIv3-discrete} for $s = \frac{j}{100}$, $j = 1, \ldots, 99$. To better capture the solution behavior, we consider approximations on adapted meshes in the spirit of Grisvard \cite[Section 8.4]{Grisvard} grading according to the distance to the interface. More precisely, we fix $h>0$ and $\mu \ge 1$ and build meshes such that the triangles have size
\begin{equation} \label{eq:grading} h_T \simeq \begin{cases}
h \,d(T,\Sigma)^{(\mu-1)/\mu} &  \mbox{ if } T\cap \Sigma = \emptyset, \\
h^{\mu}  & \mbox{ if } T\cap \Sigma \neq \emptyset.
 \end{cases} \end{equation}
 
Figure \ref{fig:interface-two-balls} exhibits different slices of the computed solutions, as well as the computed value of the normal derivative at the point $(1,0)$, obtained on a mesh graded as above with $\mu = 2.5$ and with $26077$ elements on $\Ol$ and $40306$ on $\Onl$. From the right panel, it is apparent that $\left|\frac{\partial}{\partial \vn}(u_h|_{\Omega_1})(1,0)\right|$ increases with respect to $s$. While it is clear that none of the computed values equals zero, we point out that for $s=0.01$ and $s=0.5$ we obtained $\frac{\partial}{\partial \vn}(u_h|_{\Omega_1})(1,0) \approx -6.7\times 10^{-4}$  and $\frac{\partial}{\partial \vn}(u_h|_{\Omega_1})(1,0) \approx -9.4\times 10^{-3}$, respectively. In contrast, for $s=0.99$ we computed $\frac{\partial}{\partial \vn}(u_h|_{\Omega_1})(1,0) \approx -0.430$; to better illustrate the convergence for $s \to 1$, we also run the experiment with $s = 0.999$ and obtained $\frac{\partial}{\partial \vn}(u_h|_{\Omega_1})(1,0) \approx -0.494$, in very good agreement with the expected limit value $-1/2$. Finally, we point out that, in our experiments we observed that, for all $s$, $\frac{\partial}{\partial \vn}(u_h|_{\Omega_1})(1,0)$ was monotonically increasing as the meshes were increasingly refined.

\begin{figure}[h!] 
\begin{center}
   \includegraphics[width=.45\textwidth,keepaspectratio=true]{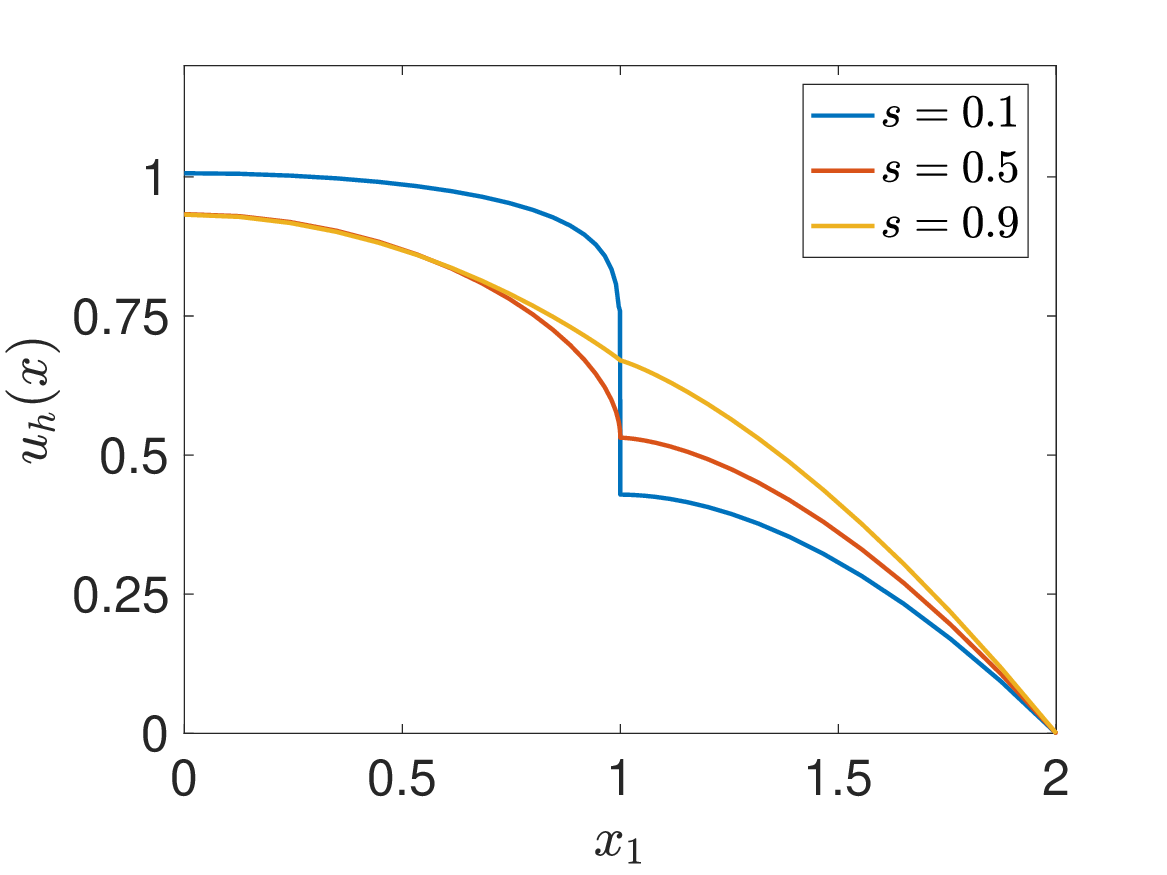}
   \includegraphics[width=.45\textwidth,keepaspectratio=true]{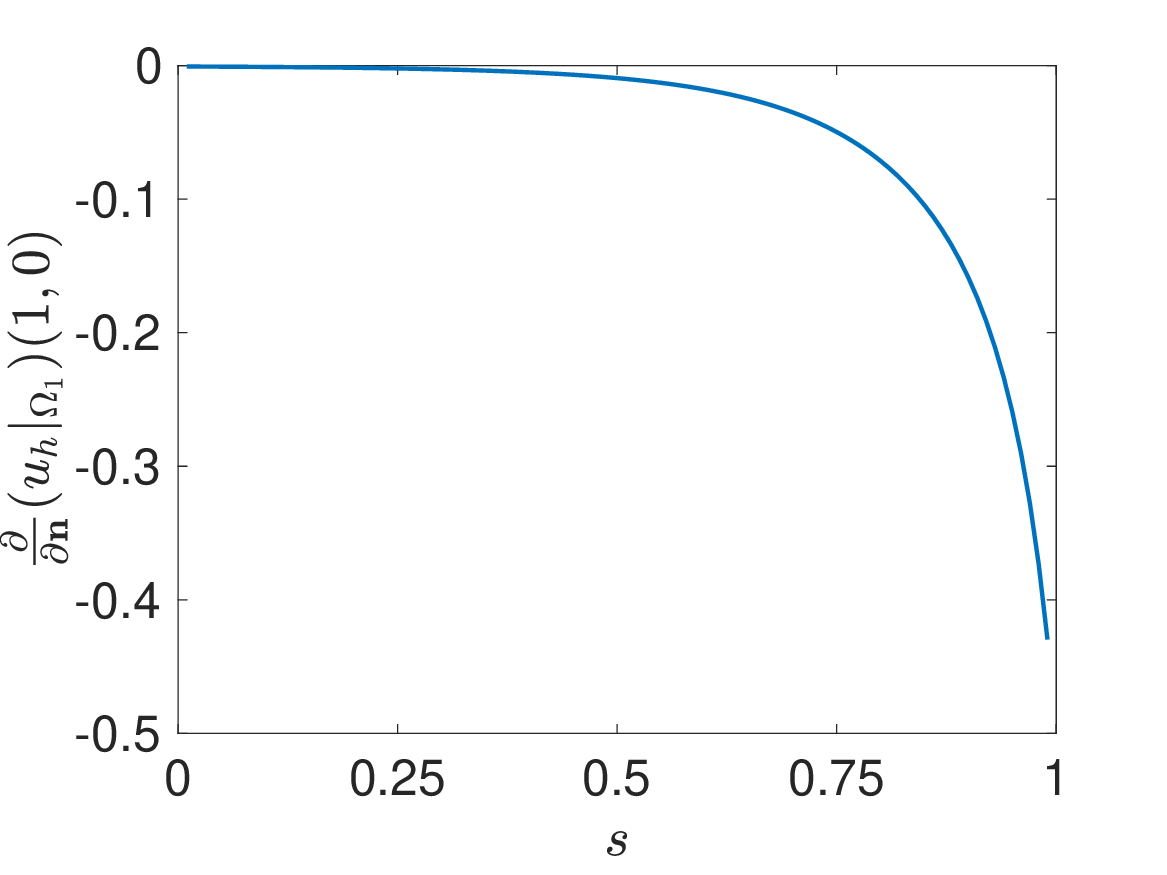}
	\caption{Finite element solutions to the problem described in Section \ref{sec:two-balls-experiment}. Left panel: slices $\{ x_1 > 0 ,x_2 = 0\}$ for $s=0.1, 0.5, 0.9$. Right panel: computed values of $\frac{\partial}{\partial \vn}(u_h|_{\Omega_1})(1,0)$ for different values of $s$.}  	\label{fig:interface-two-balls} 
 \end{center}
\end{figure}
\subsubsection{Isolated subdomains} 
Our next set of experiments aims to illustrate the coupling between the local and nonlocal models for the energy \eqref{eq:energy-III} even when $\Omega$ is composed of two isolated subdomains, namely $\Sigma = \emptyset$.
We consider $\Ol = (-\frac{3}4, -\frac14) \times (-\frac12,\frac12)$, $\Onl = (\frac14, -\frac34) \times (-\frac12,\frac12)$; $s = 0.5$; $\sl \equiv 1$, $\snl \equiv 1$; and either $f = \chi_{\Ol}$ or $f = \chi_{\Onl}$. Figure \ref{fig:two_subdomains} shows that finite element solutions we obtained in this setting and indicates that, even if $f$ is identically zero on one subdomain in each case, the solution does not vanish. On each panel, we use different color maps for each subdomain so that the behavior of the solution becomes clearer.

\begin{figure}[h!] 
\begin{center} \begin{tabular}{cc|cc}
   \includegraphics[width=.4\textwidth,keepaspectratio=true]{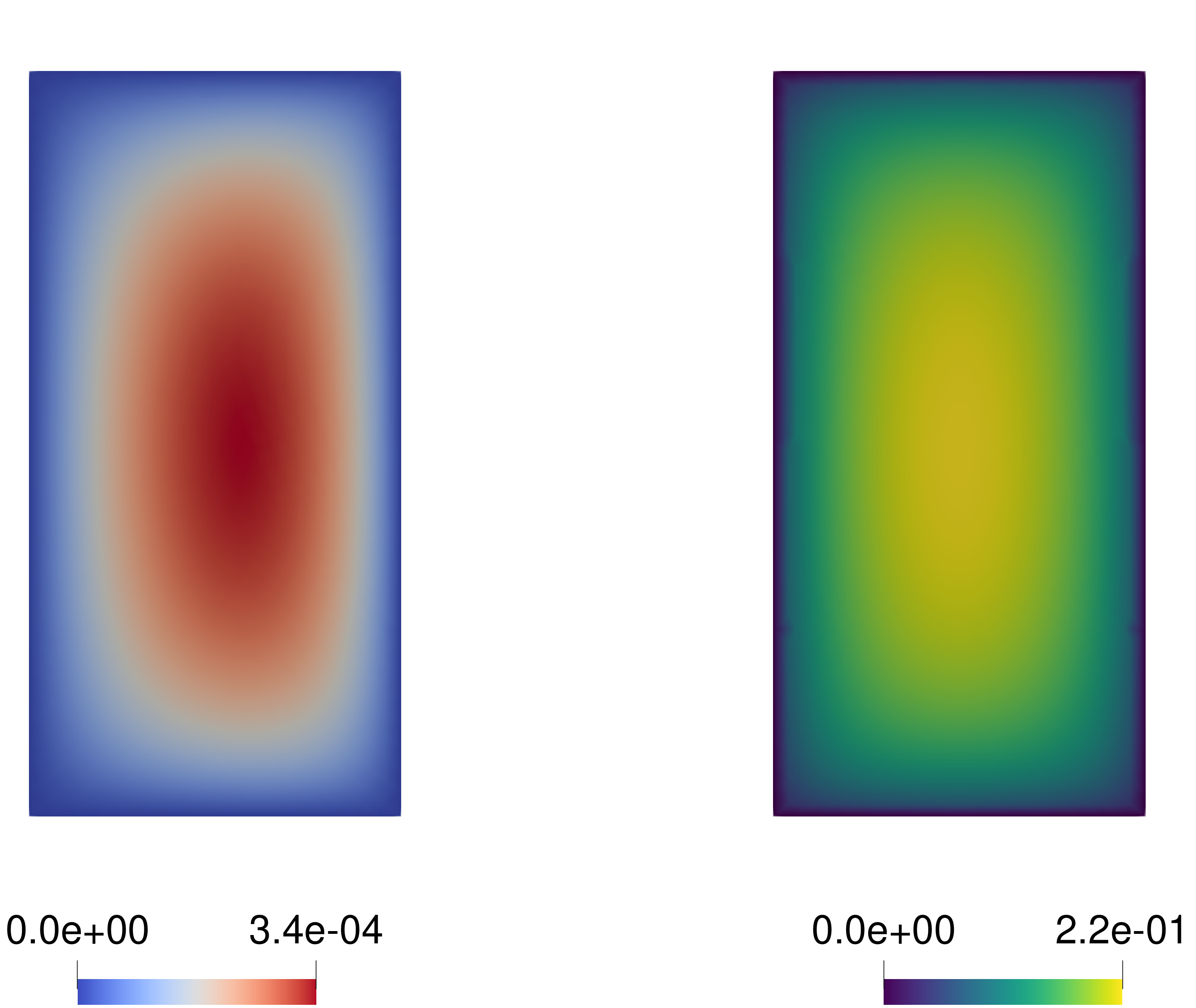}& \hspace{.04\textwidth}  & \hspace{.04\textwidth}  &
   \includegraphics[width=.4\textwidth,keepaspectratio=true]{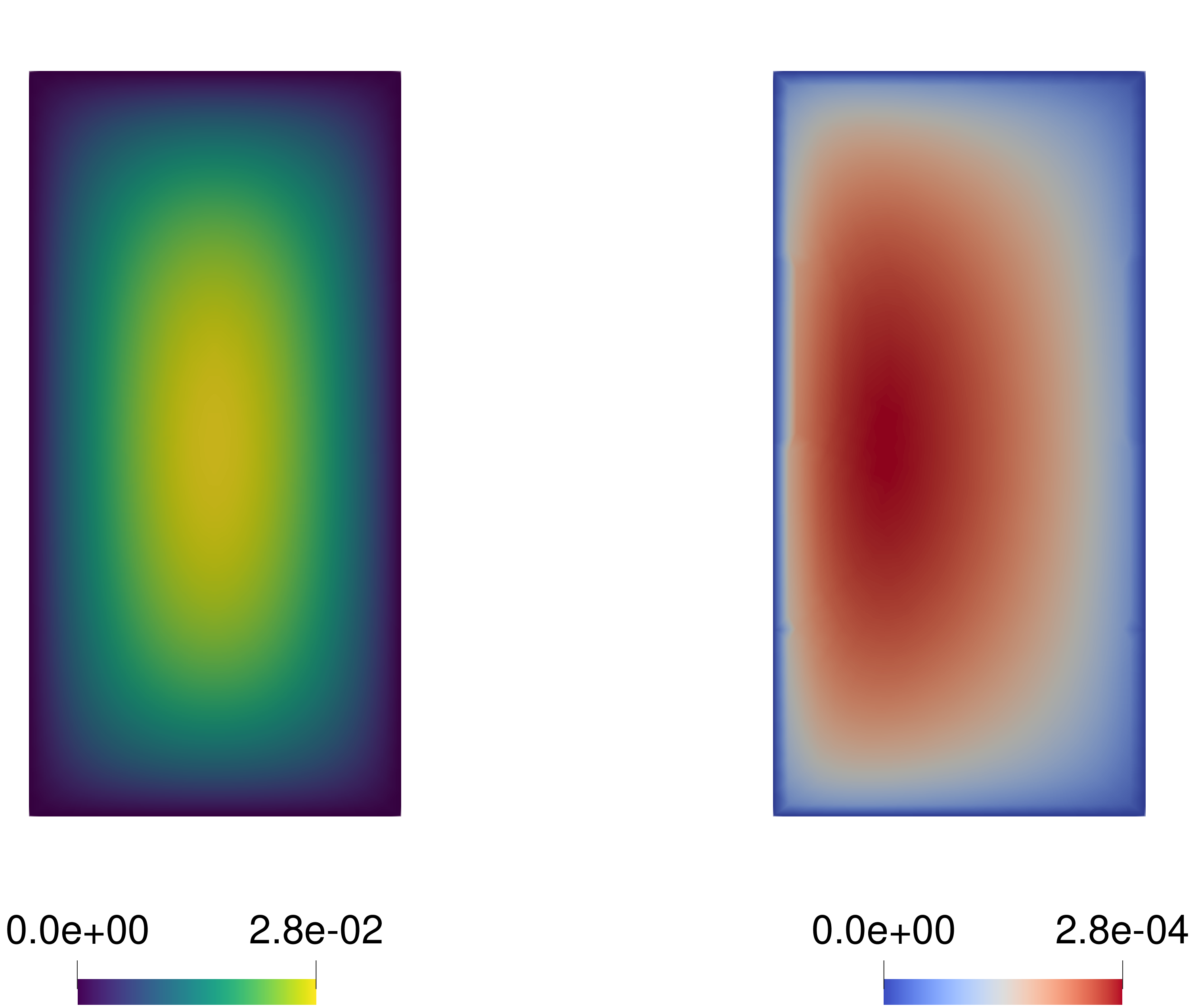}
   \end{tabular}
	\caption{Minimizers of \eqref{eq:weak-EIIv3-discrete} in the setting of Section \ref{sec:isolated-subdomains}. Left panel: $f = \chi_{\Ol}$; right panel: $f = \chi_{\Onl}$.} 	\label{fig:two_subdomains} 
 \end{center}
\end{figure}

\subsubsection{Singularities arising in the local model} \label{sec:singularities-experiment}
Our next experiment explores the development of algebraic singularities over $\Ol$ for minimizers to \eqref{eq:energy-III} and their dependence on $s$.
We consider $\Omega = (-1/2,1/2)^2 \setminus [-1/2,0]^2$, with $\Ol = \Omega \cap \{ x_2 > 0 \}$, $\sl \equiv 1$, $\snl \equiv 1$, and $f \equiv 1$. We observe that the domain configuration is such that $\Sigma$ intersects $\partial \Omega$ with a straight angle at the origin. Thus, according to Propositions \ref{prop:local-regularity-mixed} and \ref{prop:regularity-small-s}, in this setting, for $s < 1/2$, the regularity pickup in $\Ol$ is given by $\gamma(\Ol) = 3/2$. On the other hand, as $s \to 1$ one formally recovers a classical Poisson problem on the $L$-shaped $\Omega$ with homogeneous Dirichlet boundary conditions, that possesses a regularity pickup index $\gamma(\Omega) = 5/3$.

We aim to illustrate the dependence of the solution regularity for this problem with respect to $s$. For this purpose, we used a mesh graded similarly to \eqref{eq:grading}, although we considered the distance to the origin instead of the distance to the interface to drive the grading. The left panel in Figure \ref{fig:slices_s01} depicts the mesh in $\Omega$: we have a local mesh size near the origin $h \approx 1.5 \times 10^{-5}$. 

 \begin{figure}[h!] 
\begin{center} \begin{tabular}{ccccc}
 \includegraphics[width=.325\textwidth,keepaspectratio=true]{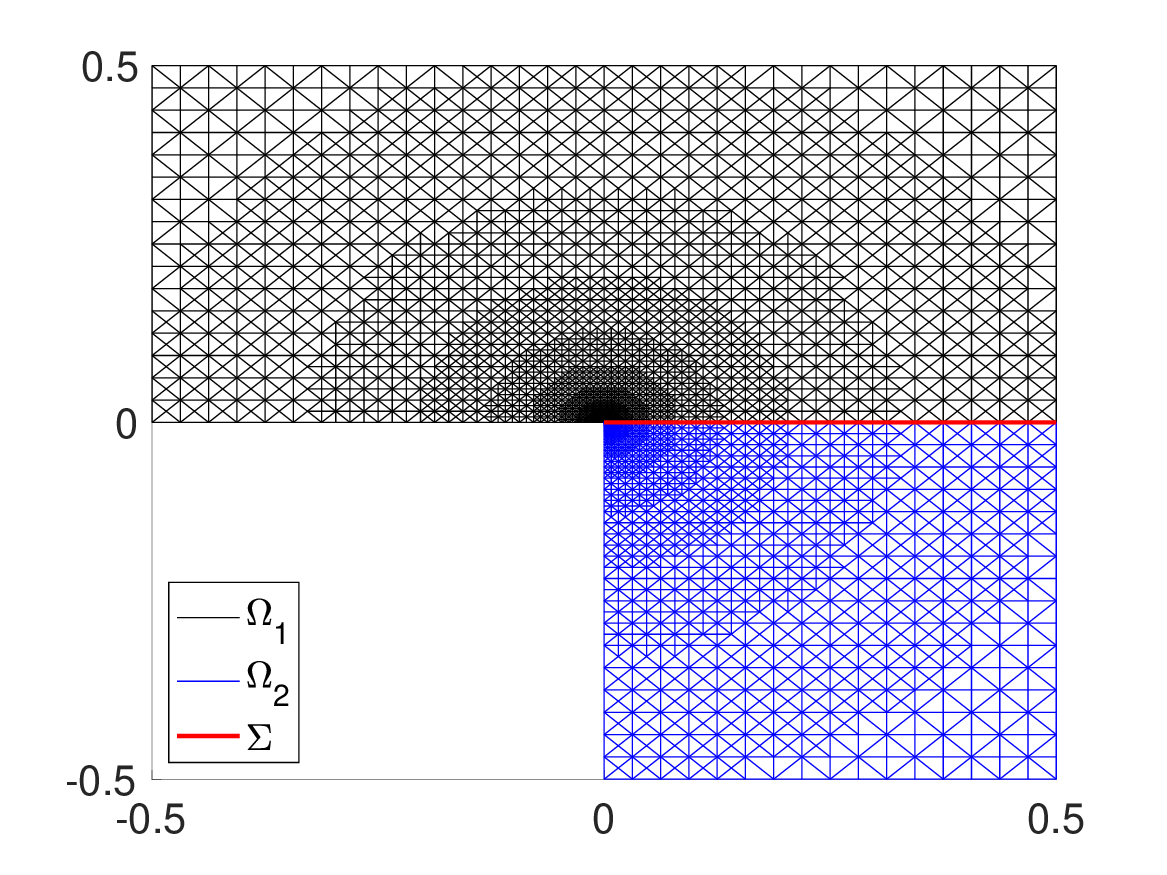}  & \hspace{-0.75cm}  & \includegraphics[width=.325\textwidth,keepaspectratio=true]{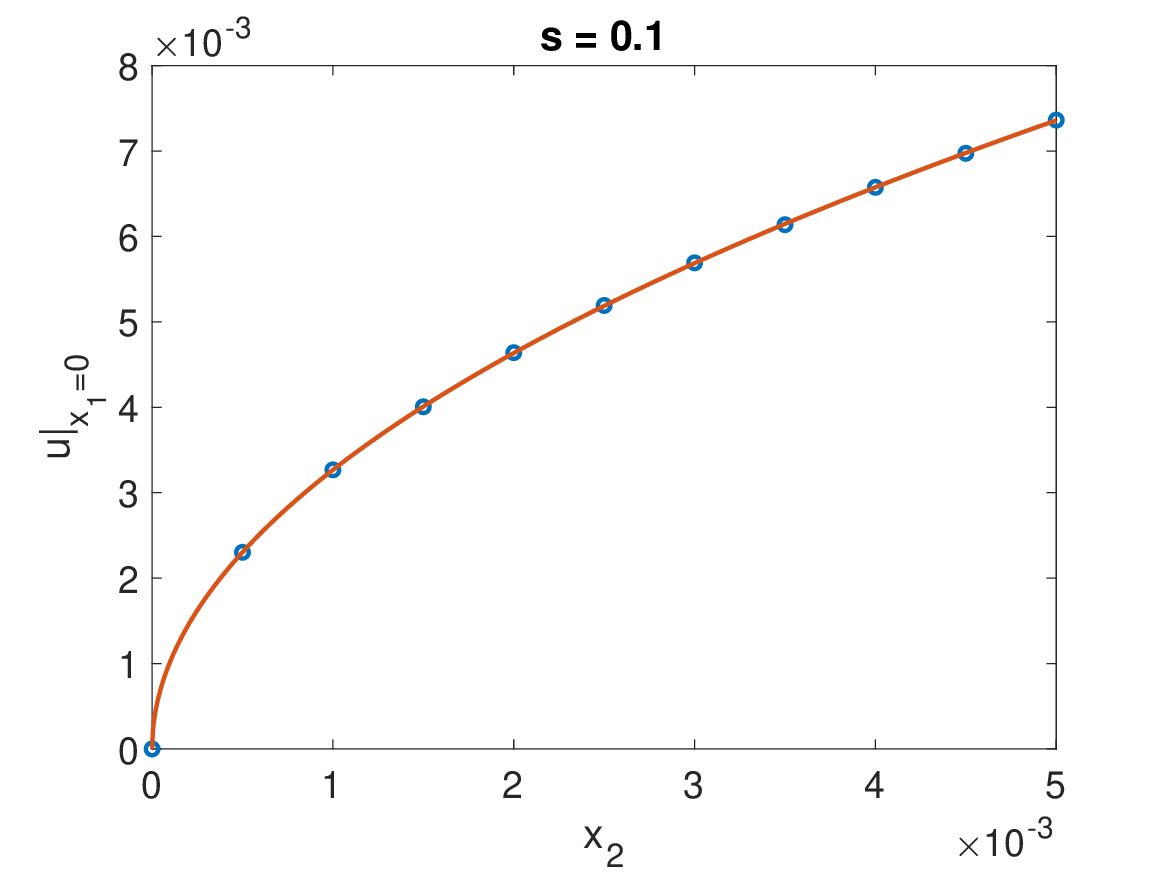}   & \hspace{-0.75cm} & 
   \includegraphics[width=.325\textwidth,keepaspectratio=true]{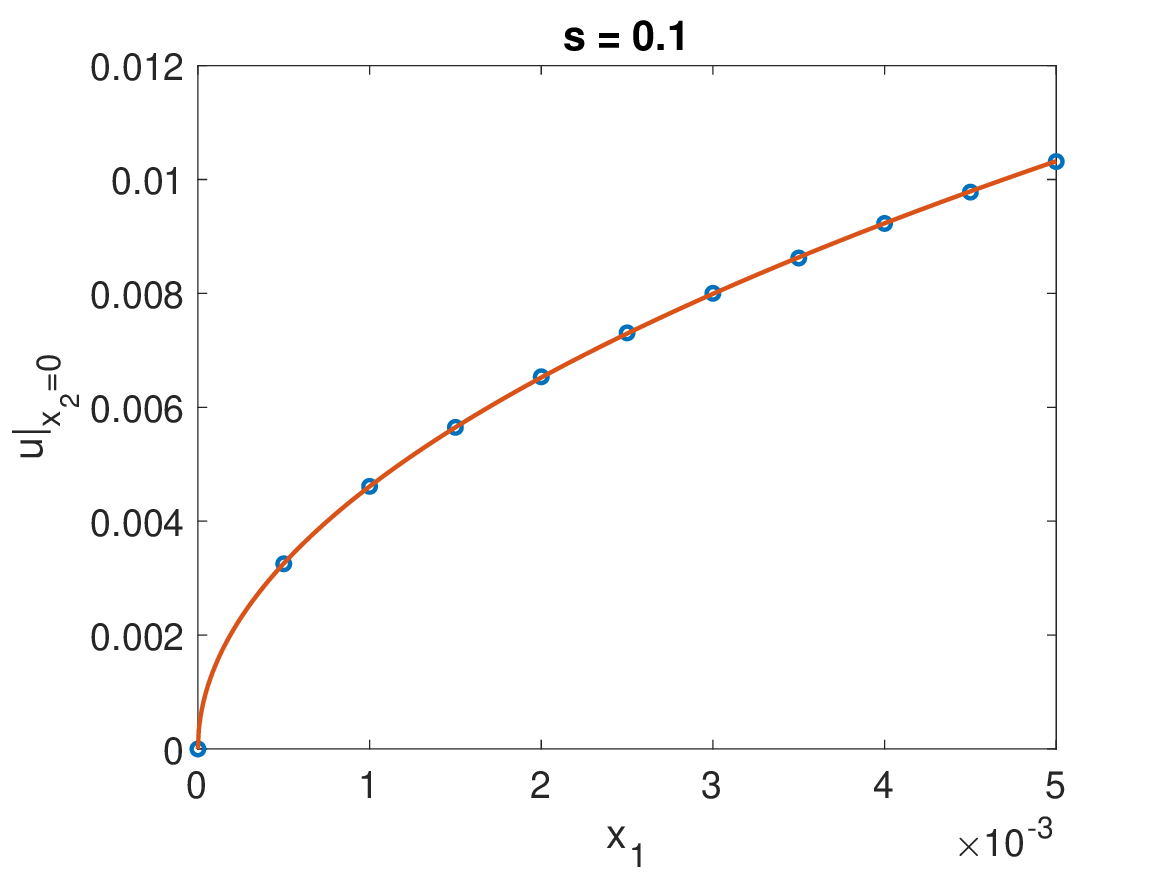}
   \end{tabular}
 \end{center}
\caption{Singular behavior in $\Ol$ of solution for $s=0.1$. Left: top view; center: slice $x_1=0$; right: slice $x_2=0$.} 	\label{fig:slices_s01}
\end{figure}
 We performed least-squares fittings of our computed solution over the segments $\{0\} \times (0,1/200)$ and $(0,1/200) \times  \{0\}$ with a model of the form $u(t) \simeq C t^\gamma$ using $11$ equally spaced sampling points, as shown in the center and right panels of Figure \ref{fig:slices_s01} for $s=0.1$; our findings are summarized in Table \ref{tab:least-squares-graded}. 
 We observe that, for small $s$ and even up to $s \approx 0.7$, the asymptotic behavior of $u|_{\Ol}$ near the origin is consistent with the value $\gamma(\Ol) = 3/2$, there is a transition for larger values of $s$, and for $s\to 1$ one recovers an exponent consistent with the expected global regularity $\gamma(\Omega) = 5/3$.

\begin{table}[h!]
\begin{center}
\begin{tabular}{c|c|c|c|c|c|c|} 
\cline{2-7}
 & $s = 0.1$  &  $s = 0.2$ & $s = 0.3$  &  $s = 0.4$ & $s = 0.5$  & $s = 0.6$\\ \hline
\multicolumn{1}{|c|}{$x_1=0$} & $0.5046$ & $0.5041$ & $0.5037$ & $0.5032$ & $0.5028$ & $0.5028$\\
\hline
\multicolumn{1}{|c|}{$x_2=0$} & $0.5009$ & $0.5012$ & $0.5015$ & $0.5017$ & $0.5018$ & $0.5017$\\
\hline
\multicolumn{1}{c}{ \ }  \\
\cline{2-7}
 & $s = 0.7$ & $s = 0.8$ & $s = 0.9$ & $s=0.95$  & $s=0.99$ & $s=0.999$ \\ \hline
\multicolumn{1}{|c|}{$x_1=0$}  & $0.5046$ & $0.5142$ & $0.5544$ & $0.6007$ & $0.6527$ & $0.6655$\\
\hline
\multicolumn{1}{|c|}{$x_2=0$}   & $0.5016$ & $0.5055$ & $0.5382$ & $0.5862$ & $0.6482$ & $0.6650$\\
\hline
\end{tabular}
	\caption{Computed singular exponents via least-squares fitting on the segments $\{0\} \times (0,1/200)$ and $(0,1/200) \times  \{0\}$  for different values of $s$ on a graded mesh.} 	\label{tab:least-squares-graded}
 \end{center}
\end{table}

\subsubsection{Eigenvalue comparison} \label{sec:eigenvalues}

For the next experiment we add a negative, zero-order term to the energy $E_{II}$. More precisely, for $\omega > 0$ we consider
$E_\omega (v) := E_{II}(v) - \omega^2 \| v \|_{L^2(\Omega)}^2$. Clearly, the energy minimization problem becomes ill-posed provided $\omega^2$ coincides with an eigenvalue of the associated differential operator over $\Omega$ with homogeneous boundary conditions. We test our code in the same setting as in \S\ref{sec:two-balls-experiment}: since $\sl \equiv 1$ and $\snl \equiv 1$, formally, for $s \sim 1$ we expect to lose uniqueness of solutions as $\omega^2$ approaches any eigenvalue of the (classical) Laplacian with homogeneous Dirichlet boundary conditions. Over $\Omega = B(0,2) \subset \R^2$, such eigenvalues are given by
\[
\lambda_{m,n} = \frac{j_{m,n}^2}4,
\]
where $j_{m,n}$ is the $n$-th positive  zero of the Bessel function of the first kind $J_m$. In increasing order, the first four numerical values are
\[
\lambda_{0,1} \approx 1.45,\quad \lambda_{1,1} \approx 3.67, \quad \lambda_{2,1} \approx 6.59, \quad \lambda_{0,2} \approx 7.62.
\]
Figure \ref{fig:conditioning-non-coercive} reports the estimated $\ell_2$-condition numbers of the resulting system matrices on quasi-uniform meshes with $3442$ elements on $\Ol$ and 2249 on $\Onl$ --this corresponds to a mesh size $h \approx 5\times10^{-2}$-- as a function of $\omega^2$ and for different values of $s$ and $\omega^2$ spanning $(0,10)$. For $s=0.99$, we observe a very good agreement with the corresponding eigenvalues of the limit local problem while as $s$ decreases we observe that the spectrum tends to shift to the left. As $s \to 0$, the eigenvalues cluster around~1 from above, see the close-up for  $\omega^2 \in (0,4)$ and $s=0.25$.
 \begin{figure}[h!] 
\begin{center} \begin{tabular}{ccccc}
 \includegraphics[width=.325\textwidth,keepaspectratio=true]{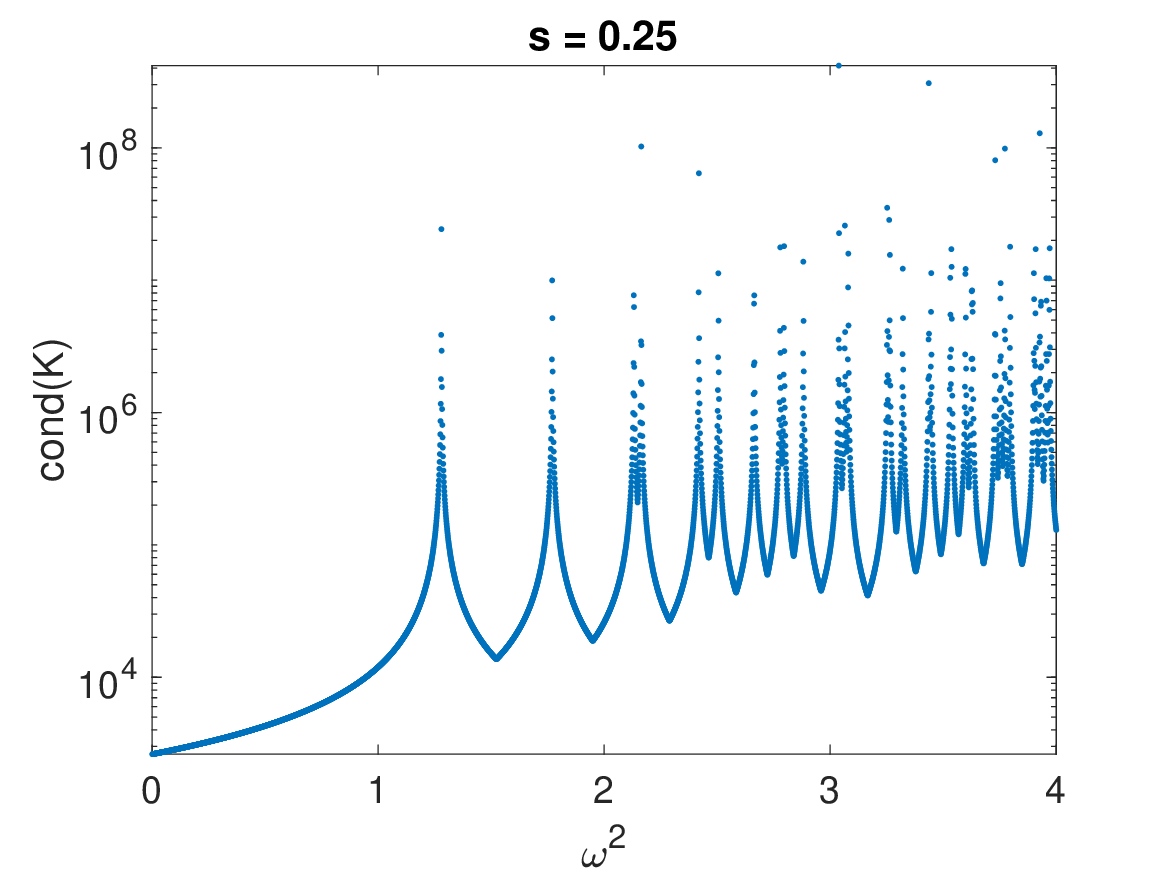} & \hspace{-0.75cm} & 
   \includegraphics[width=.325\textwidth,keepaspectratio=true]{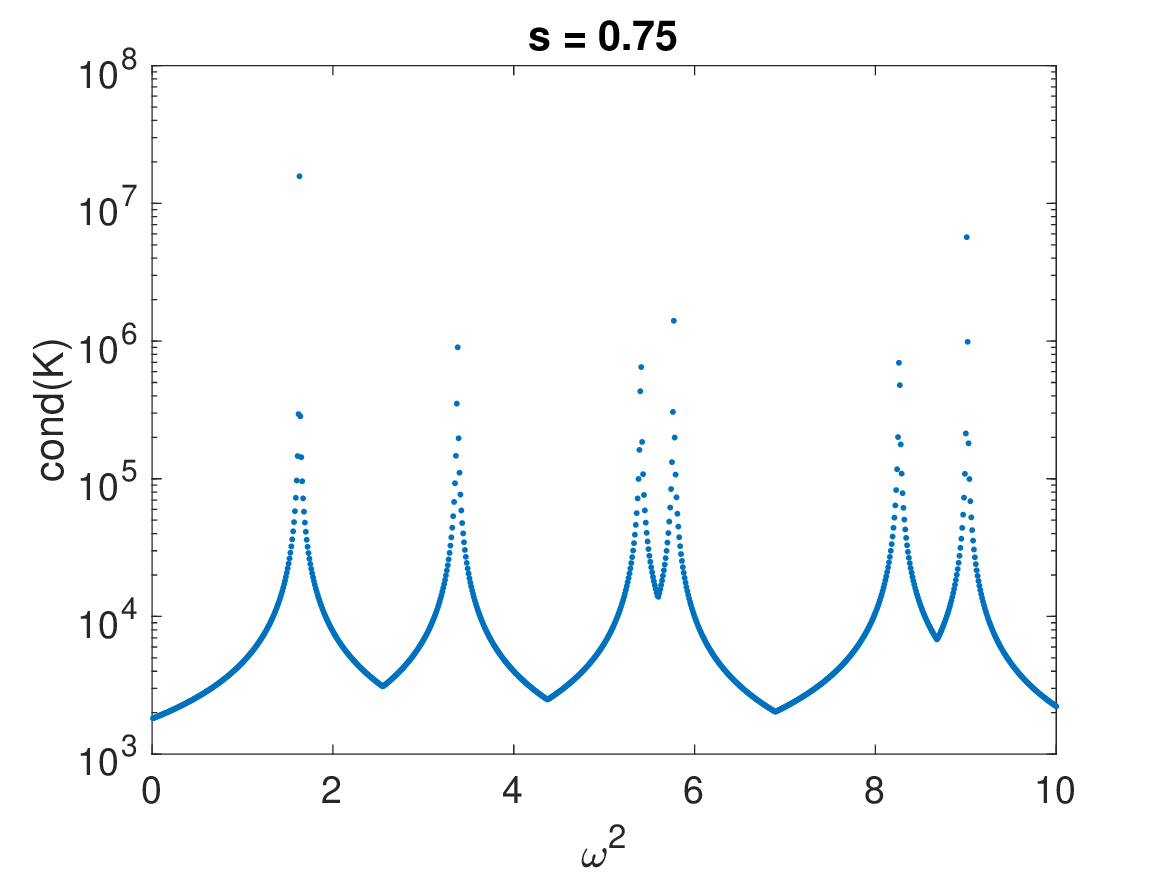}  & \hspace{-0.75cm} &
   \includegraphics[width=.325\textwidth,keepaspectratio=true]{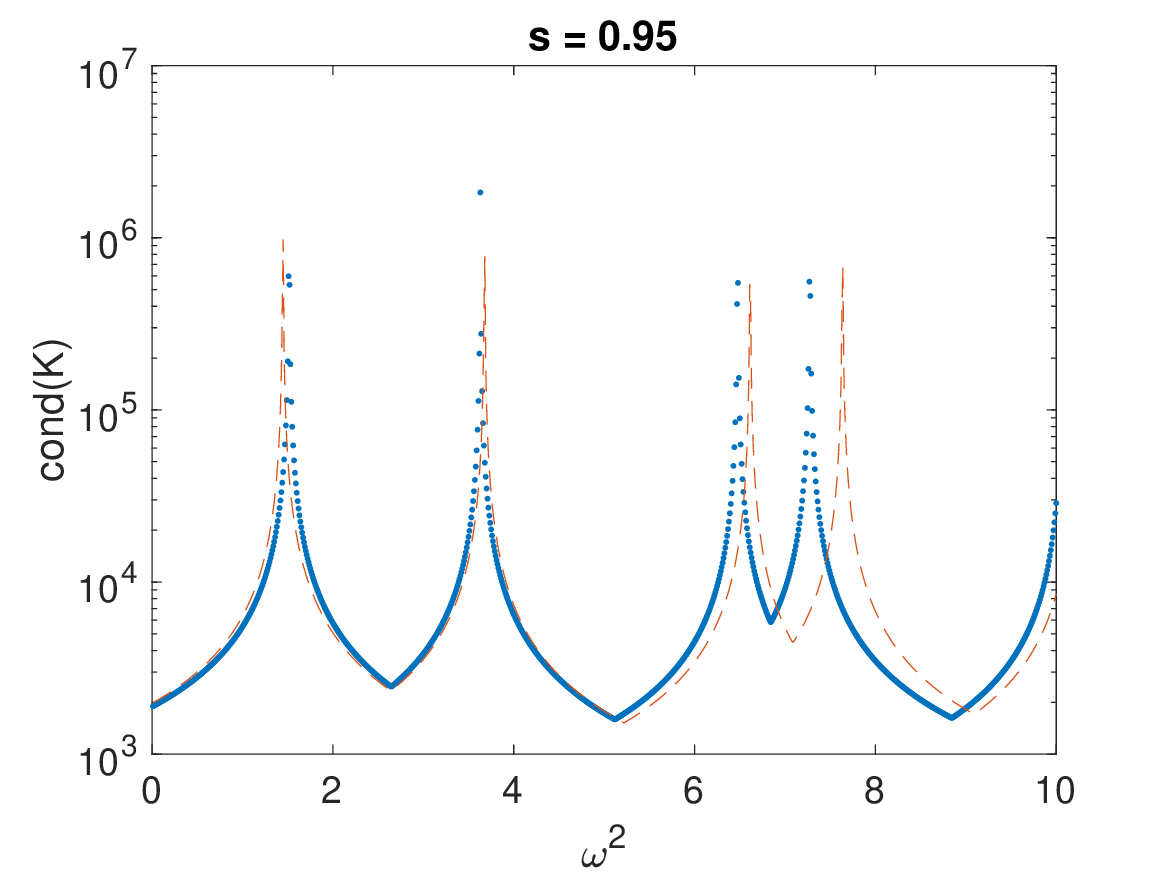}
   \end{tabular}
	\caption{In blue, computed condition numbers in the setting of \S\ref{sec:eigenvalues} for $s=0.25$ (left), $s=0.75$ (center), and $s=0.95$ (right). In the right panel, the dashed line depicts the condition numbers for the corresponding discrete local problems.} 	\label{fig:conditioning-non-coercive}
 \end{center}
\end{figure}

\subsubsection{Energy comparison in the limit $s \to 1$} \label{sec:energy-comparison}
Our final experiment aims to compare the two energies \eqref{eq:energy-I} and \eqref{eq:energy-III}, and highlight the different fashion in which both recover the limiting local energy as $s \to 1$.
On the square $\Omega = (-1/2,1/2)^2$, we consider $u_{loc}(x_1,x_2) = (1/4 - x_1^2) (1/4 - x_2^2)$, which solves $-\Delta u_{loc} = f$  with homogeneous Dirichlet boundary conditions and $f(x_1,x_2) = -2 (x_1^2 + x_2^2) + 1$. Considering $\Ol = \Omega \cap \{ x_1 < 0 \}$, Figure \ref{fig:energy-comparison} depicts the minimizers $u_I$ and $u_{II}$ we obtained resp. for the two energies $E_I$ and $E_{II}$ for $s = 0.999$, as well as the corresponding solution to the local problem. While visually both solutions to the coupled local/nonlocal models seem to approximate the desired solution, it seems the minimizer of $E_{II}$ is closer to it, while the coupling for the minimizer of \eqref{eq:energy-I} is weaker. 
To make this statement more precise, Table \ref{tab:energy-comparison} compares the $L^2(\Omega)$-norms of such discrepancies for values of $s$ close to $1$. While for the energy $E_{II}$ the convergence seems to be linear with respect to $(1-s)$, the behavior for $E_I$ is less clear and the discrepancies are of a larger magnitude. 

 \begin{figure}[h!] 
\begin{center} \begin{tabular}{ccccc}
 \includegraphics[width=.325\textwidth,keepaspectratio=true]{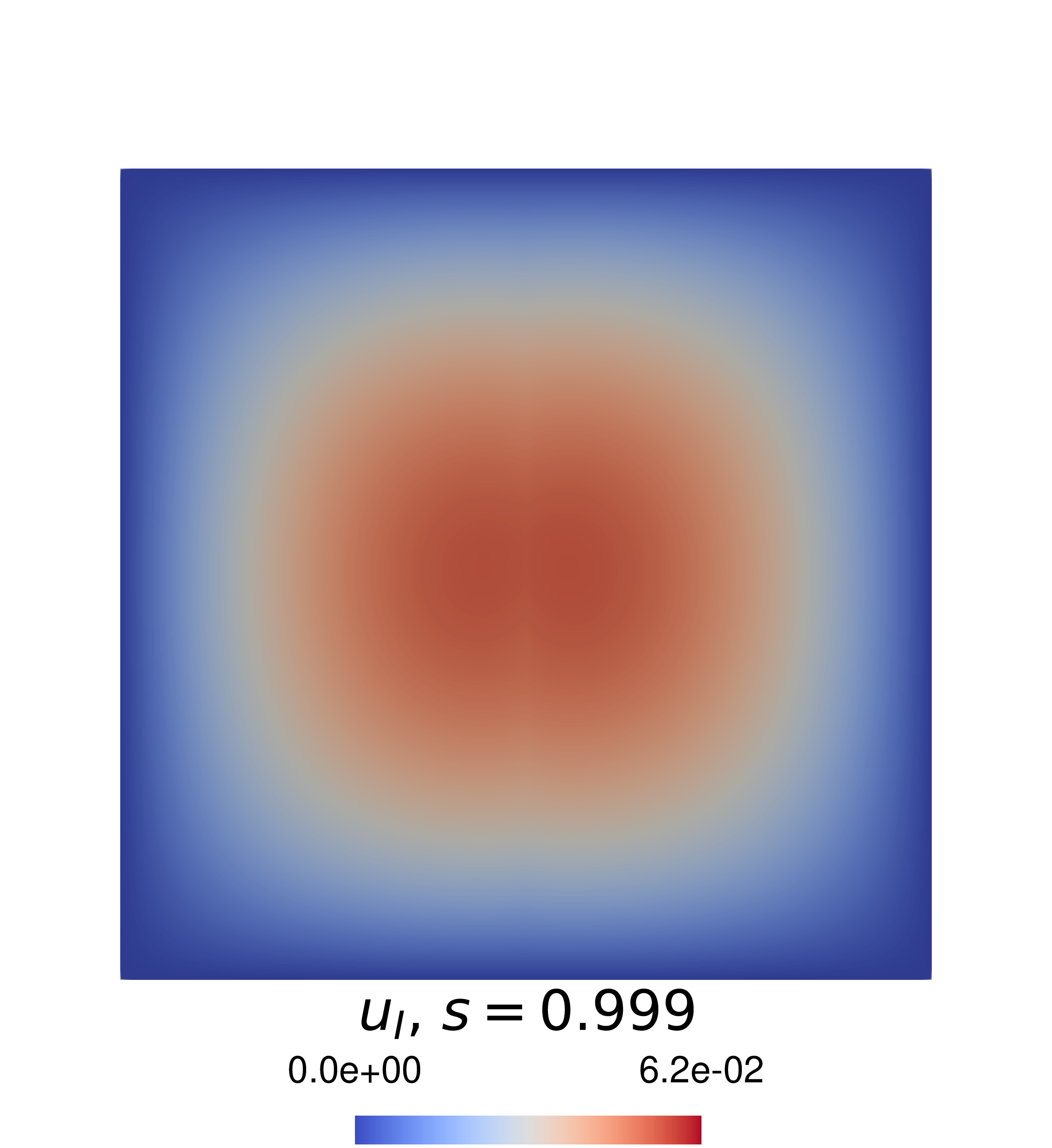} & \hspace{-0.75cm}  & \includegraphics[width=.325\textwidth,keepaspectratio=true]{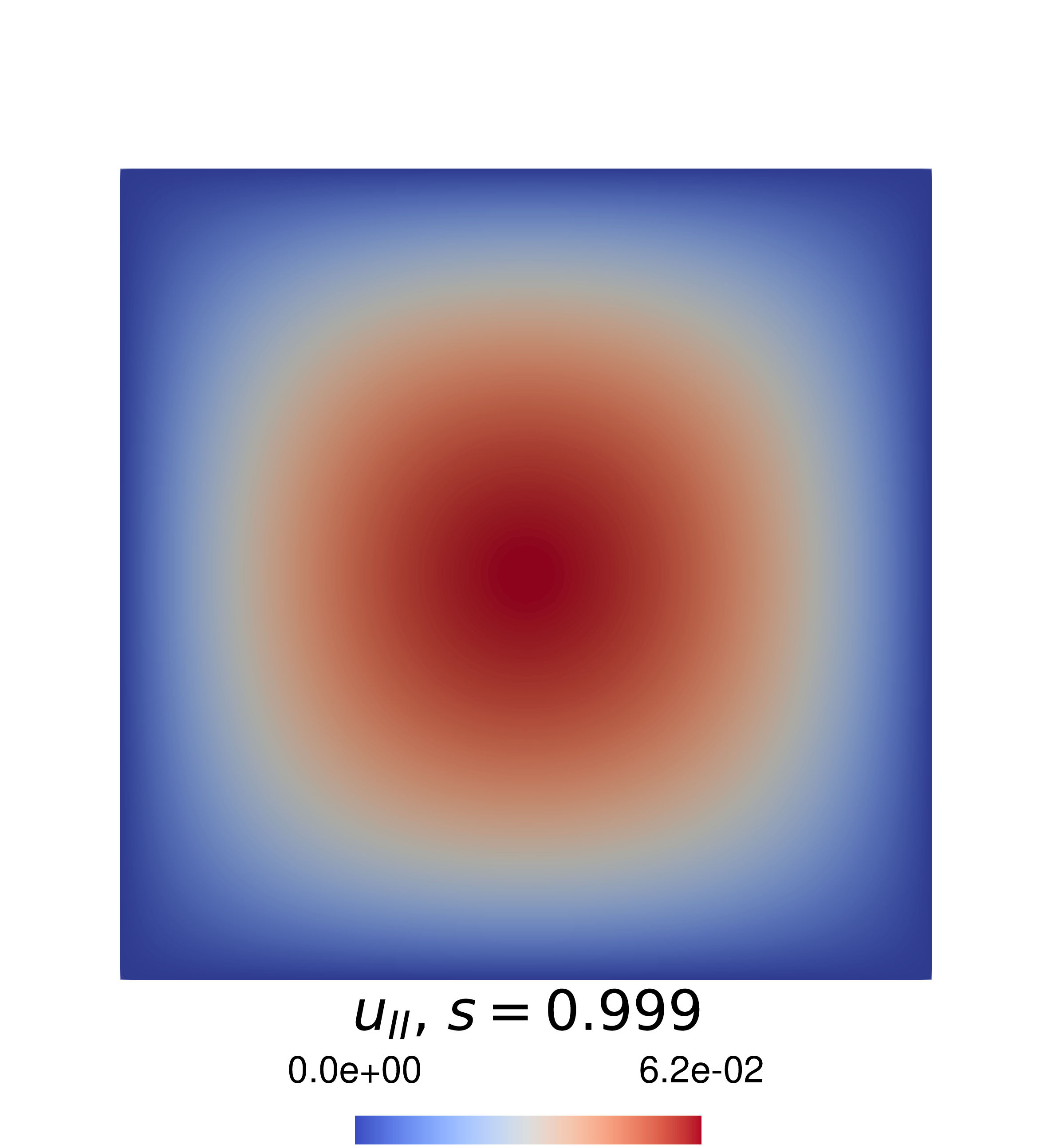}& \hspace{-0.75cm}   & 
   \includegraphics[width=.325\textwidth,keepaspectratio=true]{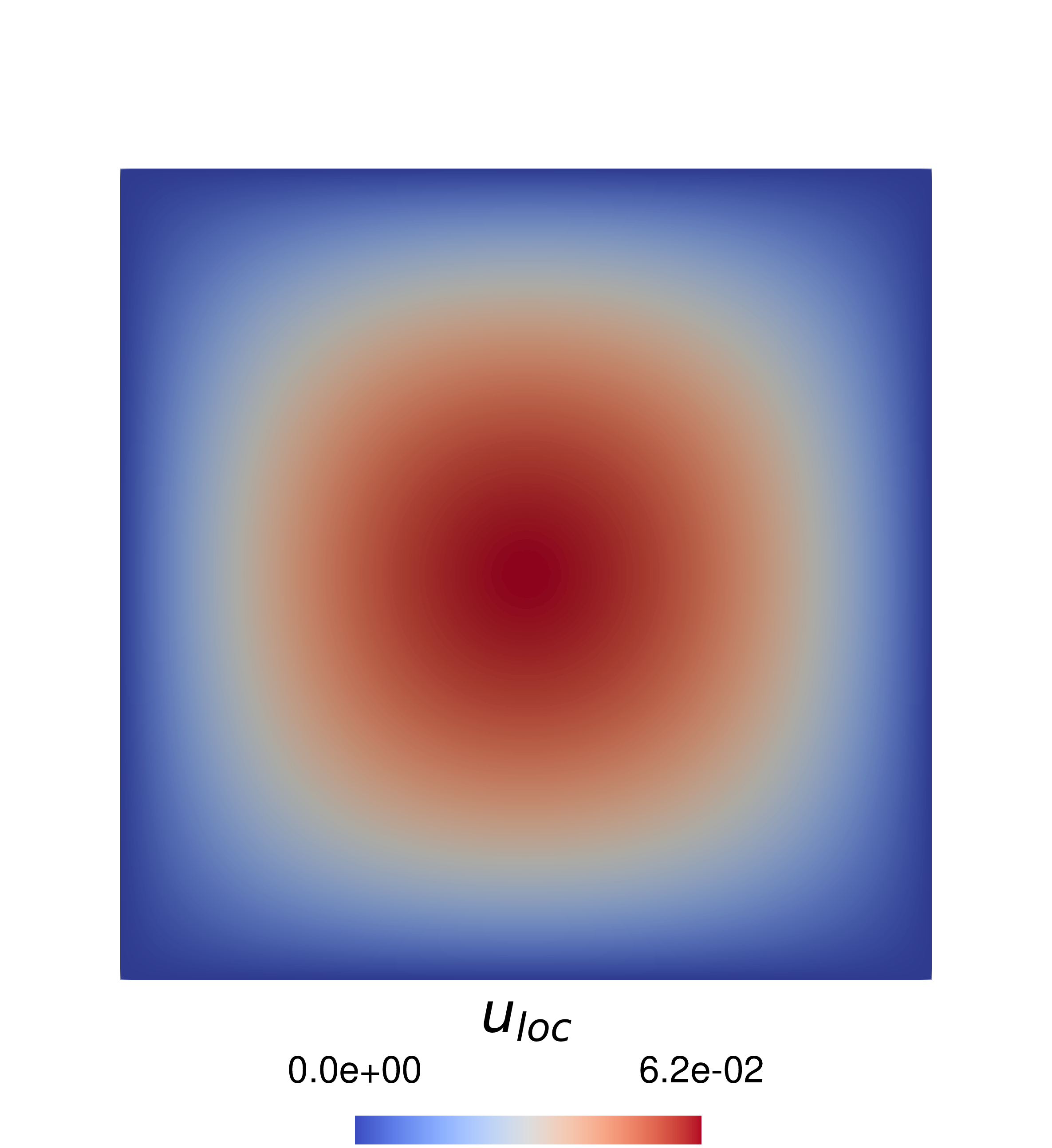} 
      \end{tabular}
	\caption{Discrete energy minimizers for $s=0.999$ (left and center, corresponding to $E_I$ and $E_{II}$, respectively), and solution to the limiting local problem. We use the same scale for the three plots.} 	\label{fig:energy-comparison}
 \end{center}
\end{figure}

\begin{table}[h!]
\begin{center}
\begin{tabular}{c|c|c|c|} 
\cline{2-4}
 & $s = 0.9$  &  $s = 0.99$ & $s = 0.999 $\\ \hline
\multicolumn{1}{|c|}{$E_I$} & $1.6 \times 10^{-2}$ & $1.3 \times 10^{-2}$ & $3.2 \times 10^{-3}$ \\
\hline
\multicolumn{1}{|c|}{$E_{II}$} & $7.1 \times 10^{-3}$ & $6.4 \times 10^{-4}$ & $6.4 \times 10^{-5}$ \\
\hline
\end{tabular}
	\caption{Discrepancy between solutions to \eqref{eq:Euler-Lagrange-E_I-discrete} and \eqref{eq:weak-EIIv3-discrete}  for different values of $s$ and $u_{loc}$  in the $L^2(\Omega)$-norm.} 	\label{tab:energy-comparison}
 \end{center}
\end{table}

\appendix
\section{Some basic results on Besov spaces} \label{sec:Besov}
This appendix collects a few well-known results about Besov spaces that we use in the paper. While it is customary to introduce such spaces by real interpolation between integer-order Sobolev spaces, here we employ an equivalent definition through difference quotients. In particular, we restrict to spaces with differentiability order between 0 and 1. Given a function $v \colon \R^n \to \R$ and a vector $h \in \R^n \setminus \{0\}$, we denote by $\tau_hv(x) := v(x+h)$ the translation of $v$ with vector $h$. Given $\domain \subset \R^n$ and $\rho > 0$, we write
\[
\domain_\rho := \{ x \in \domain \colon d(x, \partial\domain)<\rho \}.
\]

\begin{definition} Let $D$ be a fixed ball centered at the origin, $\delta \in (0,1)$, $p,q \in [1,\infty]$. Then, 
\begin{equation} \label{eq:def-Besov}
|v|_{B^\delta_{p,q}(\domain)} := \Big(q\delta(1-\delta)
\int_{D} \frac{\| \tau_h v - v  \|_{L^p(\domain_{|h|})}^q}{|h|^{n+q\delta}}dh \Big)^{1/q}
\end{equation}
 for $p,q\in[1,\infty)$ while for $q=\infty$ we let
\[
|v|_{B^\delta_{p,\infty}(\domain)} := \sup_{h \in D} 
\frac{\| \tau_h v -  v \|_{L^p(\domain_{|h|})}}{|h|^{\delta}},
\]
\end{definition}

Importantly, when $p = 2$ we have the algebraic and topological equivalence $H^\delta(\domain) = B^\delta_{2,2}(\domain)$ for all $\delta \in \R$, and actually the same equivalence between Besov and Sobolev spaces holds for an arbitrary $p \in [1,\infty]$ as long as the differentiability order $\delta$ is not an integer.

We also make use of Besov spaces of functions that are supported on domains. For that purpose, we define
\[
\dot{B}^\delta_{p,q}(\domain) := \{ v \in B^\delta_{p,q}(\R^n) \colon \mbox{supp } v \subset \overline \domain \}.
\]
The Besov scale on bounded Lipschitz domains possesses some relevant orderings with respect to its defining parameters. While the inclusion
\[ \begin{array}{llll}
B^{\delta_1}_{p,q_1}(\domain) \subset B^{\delta_0}_{p,q_0}(\domain) & \mbox{ if } 0 < \delta_0 < \delta_1, &   1 \le p \le \infty , & 1 \le q_0, q_1 \le \infty,
\end{array} \]
is well-known (for example, \cite[\S3.3.1]{Triebel10}), we are interested in a quantitative estimate.
The following result can be proven by repeating exactly the same steps as in \cite[Lemma 2.1]{BoLiNo23}.

\begin{proposition}
Let $\Omega \subset \R^n$ be a bounded Lipschitz domain, $p,q \in [1,\infty)$, $\delta\in (0,1)$, and $\epsilon \in (0,1-\delta)$. Then, $B^{\delta+\epsilon}_{p,\infty}(\Omega) \subset B^{\delta}_{p,q} (\Omega)$ with 
\begin{equation}\label{eq:Besov-Sobolev-emb}
\|v\|_{ B^{\delta}_{p,q}(\Omega)} \lesssim 
\left( \frac{\delta}{\epsilon} \right)^{\frac1q} \|v\|_{B^{\delta+\epsilon}_{p,\infty}(\Omega)}  \quad\forall \, v\in B^{\delta+\epsilon}_{p,\infty}(\Omega).
\end{equation}
\end{proposition}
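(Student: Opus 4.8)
The plan is to bound the $B^{\delta}_{p,q}(\Omega)$-seminorm of $v$ by inserting, inside the integral over $D$ that defines it, the pointwise-in-$h$ estimate that the $B^{\delta+\epsilon}_{p,\infty}(\Omega)$-seminorm provides, and then to evaluate the elementary radial integral that remains, keeping careful track of the normalizing constants so that the announced factor $(\delta/\epsilon)^{1/q}$ appears.

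First I would note that, directly from the definition of $|\cdot|_{B^{\delta+\epsilon}_{p,\infty}(\Omega)}$, one has for every $h \in D$
\[
\|\tau_h v - v\|_{L^p(\Omega_{|h|})} \le |h|^{\delta+\epsilon}\, |v|_{B^{\delta+\epsilon}_{p,\infty}(\Omega)} .
\]
The truncated domain $\Omega_{|h|}$ occurring here is exactly the one that appears in the integrand of \eqref{eq:def-Besov}, so no adjustment of integration domains is needed. Raising \eqref{eq:def-Besov} to the power $q$ and substituting this bound gives
\[
|v|_{B^{\delta}_{p,q}(\Omega)}^q \le q\delta(1-\delta)\, |v|_{B^{\delta+\epsilon}_{p,\infty}(\Omega)}^q \int_D \frac{|h|^{q(\delta+\epsilon)}}{|h|^{n+q\delta}}\, dh = q\delta(1-\delta)\, |v|_{B^{\delta+\epsilon}_{p,\infty}(\Omega)}^q \int_D |h|^{q\epsilon - n}\, dh .
\]

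Next I would compute $\int_D |h|^{q\epsilon - n}\, dh$ in polar coordinates: writing $R$ for the (fixed) radius of $D$, this equals $|\mathbb{S}^{n-1}| \int_0^R r^{q\epsilon - 1}\, dr = |\mathbb{S}^{n-1}|\, R^{q\epsilon}/(q\epsilon)$, which is finite precisely because $q\epsilon > 0$. Substituting back, the two factors of $q$ cancel and we are left with
\[
|v|_{B^{\delta}_{p,q}(\Omega)}^q \le \frac{\delta(1-\delta)\,|\mathbb{S}^{n-1}|\,R^{q\epsilon}}{\epsilon}\, |v|_{B^{\delta+\epsilon}_{p,\infty}(\Omega)}^q .
\]
Taking $q$-th roots and using $(1-\delta)^{1/q} \le 1$, $|\mathbb{S}^{n-1}|^{1/q} \le \max\{1,|\mathbb{S}^{n-1}|\}$ and $R^{\epsilon} \le \max\{1,R\}$ --- all uniform in $\delta \in (0,1)$, $\epsilon \in (0,1-\delta)$ and $q \in [1,\infty)$ --- yields $|v|_{B^{\delta}_{p,q}(\Omega)} \lesssim (\delta/\epsilon)^{1/q}\, |v|_{B^{\delta+\epsilon}_{p,\infty}(\Omega)}$. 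Since the $L^p(\Omega)$-part of the norm is common to $\|\cdot\|_{B^{\delta}_{p,q}(\Omega)}$ and $\|\cdot\|_{B^{\delta+\epsilon}_{p,\infty}(\Omega)}$, combining the two contributions gives \eqref{eq:Besov-Sobolev-emb}, and in particular $v \in B^{\delta}_{p,q}(\Omega)$, so the inclusion holds.

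This argument is essentially the one-line substitution alluded to in the text, so there is no genuine obstacle; the only delicate point is the bookkeeping of constants, i.e. checking that the normalization $q\delta(1-\delta)$ in \eqref{eq:def-Besov} together with the $1/(q\epsilon)$ coming from the radial integral collapses to exactly $\delta/\epsilon$ up to constants depending only on $n$ and $D$. This is what secures the blow-up rate $\epsilon^{-1/q}$ as $\epsilon \to 0$, which is precisely the feature invoked (with $q = 2$) in the proof of Theorem \ref{thm:reg-nl-s-small}.
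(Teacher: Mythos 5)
Your proof is correct and is exactly the direct computation that the paper delegates to \cite{BoLiNo23}: insert the pointwise-in-$h$ bound from the $B^{\delta+\epsilon}_{p,\infty}(\Omega)$-seminorm into the integral \eqref{eq:def-Besov} and evaluate the radial integral, so that the normalization $q\delta(1-\delta)$ and the factor $1/(q\epsilon)$ collapse to $(\delta/\epsilon)$ up to constants depending only on $n$ and $D$. The only caveat, shared with the statement itself rather than with your argument, is that absorbing the common $L^p(\Omega)$-part of the norms into a right-hand side carrying the prefactor $(\delta/\epsilon)^{1/q}$ tacitly assumes $\delta\gtrsim\epsilon$; this is harmless in the regime the paper actually uses (fixed $\delta$ and $\epsilon\to 0$, as in the proof of Theorem \ref{thm:reg-nl-s-small}), where your bookkeeping indeed yields the advertised blow-up rate $\epsilon^{-1/q}$.
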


Finally, we recall another well-known fact: that Besov spaces are increasing with respect to the parameter $q$. This follows from the characterization of their seminorms as discrete sums,
\[
| v |_{B^\delta_{p,q}(\domain)} \simeq
\left\lbrace
\begin{aligned}
\left( \sum_{k=0}^\infty | 2^{k\delta} \sup_{|h| \le 2^{-k}} \| \tau_h v - v \|_{L^p(\domain_{|h|})}|^q \right)^{1/q} & \quad \mbox{ if } q < \infty, \\
\sup_{k \ge 0} \, 2^{k\delta} \sup_{|h| \le 2^{-k}} \| \tau_h v - v \|_{L^p(\domain_{|h|})} & \quad \mbox{ if } q = \infty ,
\end{aligned} 
\right.
\]
and the fact that $\ell^q$ spaces are increasing with respect to $q$. We refer to \cite[\S2.10]{DVLo} for details.

\begin{lemma}
Let $\Omega \subset \R^n$ be a bounded Lipschitz domain, $p, \in [1,\infty)$, $1 \le q \le q' \le \infty$, and $\delta\in (0,1)$. Then, $B^{\delta}_{p,q}(\Omega) \subset B^{\delta}_{p,q'} (\Omega)$ with 
\begin{equation}\label{eq:Besov-Besov-emb}
\|v\|_{ B^{\delta}_{p,q'}(\Omega)} \lesssim  \|v\|_{B^{\delta}_{p,q}(\Omega)}  \quad\forall \, v\in B^{\delta}_{p,q}(\Omega).
\end{equation}
\end{lemma}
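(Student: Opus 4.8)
The plan is to reduce the statement to the elementary nesting of the sequence spaces $\ell^q \hookrightarrow \ell^{q'}$ for $1 \le q \le q' \le \infty$, using the dyadic characterization of the Besov seminorm recalled immediately above the lemma. First I would fix $v \in B^\delta_{p,q}(\Omega)$ and set, for every integer $k \ge 0$,
\[
a_k := 2^{k\delta} \sup_{|h| \le 2^{-k}} \| \tau_h v - v \|_{L^p(\Omega_{|h|})} \ge 0 .
\]
By the quoted equivalence, $|v|_{B^\delta_{p,q}(\Omega)} \simeq \| (a_k)_{k \ge 0} \|_{\ell^q}$ and $|v|_{B^\delta_{p,q'}(\Omega)} \simeq \| (a_k)_{k \ge 0} \|_{\ell^{q'}}$, with implied constants depending only on $n, p, \delta$ and on the Lipschitz character of $\Omega$, but in particular independent of $v$.

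Next I would invoke the elementary bound $\| (a_k) \|_{\ell^{q'}} \le \| (a_k) \|_{\ell^q}$, valid for nonnegative sequences whenever $q \le q'$. For $q' < \infty$ this follows by homogeneity: after normalizing so that $\| (a_k) \|_{\ell^q} = 1$ one has $a_k \le 1$ for each $k$, hence $a_k^{q'} \le a_k^q$, and summing gives $\sum_k a_k^{q'} \le \sum_k a_k^q = 1$; for $q' = \infty$ one simply notes $\sup_k a_k \le ( \sum_k a_k^q )^{1/q}$. Combining this inequality with the two equivalences above immediately yields $|v|_{B^\delta_{p,q'}(\Omega)} \lesssim |v|_{B^\delta_{p,q}(\Omega)}$.

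Finally, since the term $\| v \|_{L^p(\Omega)}$ entering $\| v \|_{B^\delta_{p,q}(\Omega)}$ does not depend on the fine index $q$, adding it to both sides of the seminorm estimate gives \eqref{eq:Besov-Besov-emb}, and in particular shows $v \in B^\delta_{p,q'}(\Omega)$, which proves the inclusion. This is essentially the argument of \cite[\S2.10]{DVLo}; there is no genuine difficulty here beyond keeping track of the equivalence constants in the dyadic characterization and handling the endpoint $q' = \infty$ separately.
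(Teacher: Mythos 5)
Your proposal is correct and follows exactly the route the paper indicates: the paper itself only sketches the argument, invoking the dyadic-sum characterization of the Besov seminorm together with the nesting of $\ell^q$ spaces and citing \cite[\S2.10]{DVLo} for details, which is precisely what you carry out. The only detail worth keeping in mind is that the implied constants also absorb the equivalence constants of the dyadic characterization, which you already note.
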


\section{Proof of Lemma \ref{lemma:mapping-Laplacian}} \label{sec:mapping-Laplacian}
In this appendix, we prove Lemma \ref{lemma:mapping-Laplacian} about the mapping properties of the weighted fractional Laplacian \eqref{eq:def-wifl}.
We note that, because $0<\alpha < 1-2s$, according to Definition \eqref{eq:def-Besov}, it suffices to show that 
\begin{equation} \label{eq:mapping-Laplacian}
\| (\tau_h - I) (-\Delta_\snl)^s u \|_{L^2(\R^n)} \lesssim |h|^{\alpha}  \| u \|_{H^{\alpha+2s}(\R^n)}.
\end{equation}
For that purpose, given $x \in \R^n$ we split
\small{\begin{equation} \label{eq:split-translation-Laplacian}
\begin{split}
& (\tau_h - I) (-\Delta_\snl)^s u (x) = (-\Delta_\snl)^s u (x+h) - (-\Delta_\snl)^s u (x) \\
& =  \! \int_{\R^n} ( \snl(x+h, y+h) - \snl(x+h, x+h) ) \left( \frac{u(x+h) - u(y+h) - u(x) + u(y)}{|x-y|^{n+2s}} \right)  dy \\ 
& +  \int_{\R^n} \snl(x+h, x+h) \left( \frac{u(x+h) - u(y+h) - u(x) + u(y)}{|x-y|^{n+2s}} \right) dy \\ 
& +  \int_{\R^n} \left( \snl(x+h, y+h) -  \snl(x, y) \right) \left( \frac{u(x) - u(y)}{|x-y|^{n+2s}} \right) dy \\ 
& =: I(x) + II(x) + III(x) .
\end{split}
\end{equation}}

We begin bounding the term $I$. We exploit the boundedness and H\"older regularity of $\snl$, cf. \eqref{eq:Holder-sigma}, and fix some $\theta < \min\{2s, \beta\}$, to deduce 
\[
|\snl(x+h, y+h) -  \snl(x+h, x+h) | \lesssim |x-y|^\theta.
\]
With that, making the change of variables $z = y - x$ and using Minkowski's inequality,
\[ \begin{split}
\| I \|_{L^2(\R^n)} & \lesssim \left( \int_{\R^n} \left(\int_{\R^n} \frac{|u(x+h)-u(y+h) -u(x) + u(y)|}{|x-y|^{n+2s-\theta}} \, dy \right)^2 dx \right)^{1/2} \\
& = \left( \int_{\R^n} \left(\int_{\R^n} \frac{|u(x+h)-u(x+h+z) -u(x) + u(x+z)|}{|z|^{n+2s-\theta}}\, dz \right)^2 dx \right)^{1/2} \\
& \le \int_{\R^n} \frac{\| \tau_z( u - \tau_h u) -  ( u - \tau_h u)\|_{L^2(\R^n)}}{|z|^{n+2s-\theta}} \, dz \\
& \lesssim \| u - \tau_h u \|_{B^{2s-\theta}_{2,1}(\R^n)}.
\end{split} \]
Next, we observe that, by interpolation of the inequalities
\[
 \| u - \tau_h u \|_{L^2(\R^n)} \lesssim |h|^{\alpha+2s} \| u \|_{H^{\alpha+2s}(\R^n)} , \qquad  \| u - \tau_h u \|_{H^{\alpha+2s}(\R^n)} \lesssim \| u \|_{H^{\alpha+2s}(\R^n)} ,
\]
we have
\[
 \| u - \tau_h u \|_{B^{2s-\theta}_{2,1}(\R^n)} \lesssim |h|^{\alpha + \theta} \| u \|_{H^{\alpha+2s}(\R^n)}.
\]
Therefore, since $|h| \le 1$, we obtain
\begin{equation} \label{eq:bound-I}
\| I \|_{L^2(\R^n)} \lesssim |h|^{\alpha + \theta} \| u \|_{H^{\alpha+2s}(\R^n)} \le  |h|^{\alpha} \| u \|_{H^{\alpha+2s}(\R^n)} .
\end{equation}

We now deal with the term $II$ in \eqref{eq:split-translation-Laplacian}. For this, we note that it essentially coincides with a first difference for a non-weighted fractional Laplacian, namely, 
\[
II (x) = \snl(x+h, x+h) \left( \tau_h - I \right) (-\Delta)^s u (x),
\]
and thus
\[
\| II \|_{L^2(\R^n)} \lesssim  \| \left( \tau_h - I \right) (-\Delta)^s u \|_{L^2(\R^n)} \lesssim |h|^{\alpha} \|  (-\Delta)^s u \|_{B^{\alpha}_{2,\infty}(\R^n)}.
\]
We use the continuity of the embedding $H^{\alpha}(\R^n) \subset B^{\alpha}_{2,\infty}(\R^n)$ (see \eqref{eq:Besov-Besov-emb}) and the well-known mapping property $(-\Delta)^s \colon H^{\alpha+2s}(\R^n) \to H^{\alpha}(\R^n)$ to obtain
\begin{equation} \label{eq:bound-II}
\| II \|_{L^2(\R^n)} \lesssim |h|^{\alpha} \|  u \|_{H^{\alpha+2s}(\R^n)}.
\end{equation}

Finally, we bound the integral $III$ in \eqref{eq:split-translation-Laplacian}. We again exploit the $\beta$-H\"older regularity of $\snl$ \eqref{eq:Holder-sigma},
\[
\| III \|_{L^2(\R^n)}  \lesssim |h|^\beta \left( \int_{\R^n} \left( \int_{\R^n} \frac{|u(x) - u(y)|}{|x-y|^{n+2s}} \, dy \right)^2 dx \right)^{1/2} .
\]
Upon making the change of variables $z = x + y$ and using Minkowski's inequality, we obtain
\[
 \left( \int_{\R^n} \left( \int_{\R^n} \frac{|u(x) - u(y)|}{|x-y|^{n+2s}} \, dy \right)^2 dx \right)^{1/2} = \int_{\R^n} \frac{\| \tau_z u - u \|_{L^2(\R^n)}}{|z|^{n+2s}} \, dz
\] 
and therefore, by definition \eqref{eq:def-Besov},
\begin{equation} \label{eq:bound-III}
\| III \|_{L^2(\R^n)} \lesssim |h|^{\beta} \|  u \|_{B^{2s}_{2,1}(\R^n)}.
\end{equation}

The bound \eqref{eq:mapping-Laplacian} follows by putting together \eqref{eq:bound-I}, \eqref{eq:bound-II}, and \eqref{eq:bound-III} and noticing that $\|  u \|_{B^{2s}_{2,1}(\R^n)} \lesssim \|  u \|_{H^{\alpha+2s}(\R^n)}$ by \eqref{eq:Besov-Sobolev-emb}-\eqref{eq:Besov-Besov-emb}.


\bibliographystyle{abbrv}
\bibliography{coupling}

\end{document}